\documentclass[12pt]{amsart}
\usepackage{latexsym,enumerate}
\usepackage{amssymb, xcolor,mathrsfs}
\usepackage{amsmath,amsthm,amsfonts,amssymb,latexsym, mathabx}
\usepackage[pagebackref]{hyperref}

\headheight=7pt
\textheight=574pt
\textwidth=432pt
\topmargin=14pt
\oddsidemargin=18pt
\evensidemargin=18pt

\newtheorem{theorem}{Theorem}[section]
\newtheorem{lemma}[theorem]{Lemma}
\newtheorem{proposition}[theorem]{Proposition}
\newtheorem{corollary}[theorem]{Corollary}

\newtheorem{consequence}[theorem]{Consequence}
\newtheorem{conjecture}[theorem]{Conjecture}

\newtheorem{theorema}{Theorem}

\newtheorem{conj}[theorema]{Conjecture}

\theoremstyle{definition}
\newtheorem{definition}[theorem]{Definition}

\newtheorem{remark}[theorem]{Remark}

\newtheorem*{acknowledgement}{Acknowledgement}

\numberwithin{equation}{section}

\newcommand{\GL}{{\mathrm {GL}}}

\newcommand{\SL}{{\mathrm {SL}}}
\newcommand{\PSL}{{\mathrm {PSL}}}

\newcommand{\GU}{{\mathrm {GU}}}

\newcommand{\PSU}{{\mathrm {PSU}}}

\newcommand{\SO}{{\mathrm {SO}}}

\newcommand{\Sp}{{\mathrm {Sp}}}
\newcommand{\PSp}{{\mathrm {PSp}}}

\newcommand{\Aut}{{\mathrm {Aut}}}

\newcommand{\Irr}{{\mathrm {Irr}}}

\newcommand{\ord}{{\mathrm {ord}}}

\newcommand{\Ind}{{\mathrm {Ind}}}

\newcommand{\Syl}{{\mathrm {Syl}}}

\newcommand{\lcm}{{\mathrm {lcm}}}
\newcommand{\Gal}{{\rm Gal}}

\newcommand{\Res}{{\mathrm {Res}}}

\newcommand{\CC}{{\mathbb C}}

\newcommand{\QQ}{{\mathbb Q}}
\newcommand{\ZZ}{{\mathbb Z}}

\newcommand{\FF}{{\mathbb F}}

\newcommand{\wt}{\widetilde}

\newcommand{\height}{\mathbf{ht}}
\newcommand{\lev}{\mathrm{\mathbf{lev}}}
\newcommand{\Bl}{\mathrm{\mathrm{Bl}}}

\newcommand{\bC}{{\mathbf{C}}}
\newcommand{\bG}{{\mathbf{G}}}

\newcommand{\bO}{{\mathbf{O}}}
\newcommand{\bN}{{\mathbf{N}}}
\newcommand{\bZ}{{\mathbf{Z}}}
\newcommand{\Al}{\textup{\textsf{A}}}

\newcommand{\tw}[1]{{}^#1}

\def\nor{\trianglelefteq\,}
\def\irr#1{{\rm Irr}(#1)}

\def\zent#1{{\bf Z}(#1)}

\def\cent#1#2{{\bf C}_{#1}(#2)}
\def\ker#1{{\rm Ker}(#1)}

\newcommand{\type}{\operatorname}

\def\hc#1#2{{\operatorname{R}}_{#1}^{#2}}

\usepackage{hyperref}

\begin{document}

\title[The $p$-rationality of $p$-height-zero characters]
{The $p$-rationality of $p$-height-zero characters}

\author[Nguyen N. Hung]{Nguyen N. Hung}
\address[Nguyen N. Hung]{Department of Mathematics, The University of Akron, Akron,
OH 44325, USA} \email{hungnguyen@uakron.edu}

\author[A. A. Schaeffer Fry]{A. A. Schaeffer Fry}
\address[A. A. Schaeffer Fry]{Dept. Mathematics, University of Denver, Denver, CO 80210, USA}
\email{mandi.schaefferfry@du.edu}

\subjclass[2020]{Primary 20C15, 20C20, 20C33}
\keywords{Height-zero characters, $p$-rationality, defect normalizer, global-local
principals, Alperin--McKay--Navarro conjecture}

\thanks{The first author gratefully acknowledges the support of an AMS-Simons Research Enhancement Grant and UA Faculty Research Grant FRG 1747. The second author gratefully acknowledges support from the National Science Foundation, Award No. DMS-2100912,  and her former institution, Metropolitan State University of Denver, which held the award and allowed her to serve as PI}


\begin{abstract} We propose and present evidence for a conjectural global-local phenomenon concerning the
$p$-rationality of $p$-height-zero characters. Specifically, if
$\chi$ is a height-zero character of a finite group $G$ and $D$ is a
defect group of the $p$-block of $G$ containing $\chi$, then the
$p$-rationality of $\chi$ can be captured inside the normalizer
$\bN_G(D)$.
\end{abstract}

\maketitle

\tableofcontents


\section{Introduction}
In recent years, there has been a growing interest in the study of rationality properties of characters of finite groups and their relationship to global-local properties, which relate the character theory of a finite group to $p$-local subgroups for a prime $p$.  In this paper, we are concerned with the
$p$-rationality of ($p$-)height-zero characters of finite groups.

Let $B$ be a $p$-block of a finite group $G$ and let $\Irr(B)$ denote the set of ordinary irreducible characters of $B$. The
\emph{($p$-)height} of a character $\chi\in\Irr(B)$ is given by \[{\height(\chi):=\nu(\chi(1))-\min_{\psi\in\Irr(B)}\{\nu(\psi(1))\}},\] where
$\nu:=\nu_p$ is the usual $p$-adic valuation function. We say that $\chi$ is
\emph{height-zero} if
$\height(\chi)=0$. In other words, the height-zero characters of $B$
are those characters in $B$ whose degrees have the minimal possible $p$-part.

To measure how $p$-rational (or $p$-irrational) a character $\chi$
is, one considers the $p$-part of the conductor of its values
$\{\chi(g): g\in G\}$. Recall that every character value is a
certain sum of roots of unity. Such a sum is called a \emph{cyclotomic
integer}. The conductor $c(\mathcal{S})$ of a collection
$\mathcal{S}$ of cyclotomic integers is the smallest positive
integer $n$ such that $\mathcal{S}\subseteq \QQ(\exp(2\pi i/n))$. For $\chi\in\Irr(G)$, we write $c(\chi):= c(\{\chi(g):g\in G\})$ and call this the \emph{conductor of $\chi$}. The
so-called \emph{$p$-rationality level} of $\chi$ is defined as
\[
\lev(\chi):=\nu(c(\chi)).
\]

We put forward the following, which proposes that the
$p$-rationality level of a height-zero character can be captured
inside a local subgroup, namely the defect normalizer.

\begin{conj}\label{conj:main}
Let $p$ be a prime, $G$ a finite group, and $\chi$ be a height-zero
character in a block $B$ of $G$ with $\lev(\chi)\geq 2$. Suppose
that $D$ is a defect group of $B$. Then
\[\lev(\chi)=\lev(\chi_{\bN_G(D)}).\]
\end{conj}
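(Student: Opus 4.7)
The easy direction $\lev(\chi_{\bN_G(D)}) \leq \lev(\chi)$ is immediate: since $\{\chi(g): g \in \bN_G(D)\} \subseteq \{\chi(g): g \in G\}$, we have $c(\chi_{\bN_G(D)}) \mid c(\chi)$. All the substance of the conjecture lies in the reverse inequality. Writing $N := \bN_G(D)$ and $\Gamma := \Gal(\QQ_{|G|}/\QQ)$, the claim is equivalent to the assertion that every $\sigma \in \Gamma$ that fixes $\chi(g)$ for all $g \in N$ must fix $\chi(g)$ for all $g \in G$. Equivalently, the $\Gamma$-stabilizer of $\chi$ equals that of $\chi|_N$.

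My plan is to attack this via a Navarro--Sp\"ath style reduction to almost quasi-simple groups, combined with a Galois-equivariant refinement of the Alperin--McKay conjecture. In a minimal counterexample $(G, B, \chi)$ one expects, through Clifford theory (Fong--Reynolds, Dade's theorems on block extensions, and Sp\"ath's character-triple formalism), to reduce to the situation in which $\bO_{p'}(G) \leq \bZ(G)$, the layer $E(G)$ is quasi-simple with defect group contained in $D$, and $\chi$ lies above a faithful central character. The key input is then the Navarro refinement of the Alperin--McKay conjecture: it predicts an $\mathcal{H}$-equivariant bijection $\Omega \colon \Irr_0(B) \to \Irr_0(b)$, where $b$ is the Brauer correspondent in $N$ and $\mathcal{H} \leq \Gamma$ is the subgroup acting on $p$-power roots of unity as $\zeta \mapsto \zeta^{p^k}$. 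Through $\Omega$, the $\mathcal{H}$-stabilizer of $\chi$ coincides with that of $\Omega(\chi)$, and the latter is visibly an invariant of a character of $N$. The restriction to $\lev(\chi) \geq 2$ should then enter precisely to rule out the ``boundary'' behavior that separates the Navarro--Tiep rational-characters result from its higher-level analogue.

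The delicate step is to translate this $\mathcal{H}$-equivariant information about $\Omega(\chi) \in \Irr(N)$ into information about the class function $\chi|_N$. The hope is to show that $\Omega(\chi)$ can be chosen as a distinguished irreducible constituent of $\chi|_N$---for instance, of multiplicity coprime to $p$ and satisfying a $\Gamma$-equivariant uniqueness property among such constituents---so that any $\sigma \in \Gamma$ that stabilizes $\chi|_N$ must fix $\Omega(\chi)$, and hence $\chi$ itself. Kn\"orr--Robinson projections, Brou\'e perfect isometries in the abelian-defect case, and ordinary character-triple isomorphisms should together provide the mechanism for constructing (or at least identifying) $\Omega$ directly from the restriction map rather than abstractly.

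The principal obstacle, unsurprisingly, is that the required Galois-equivariant form of Alperin--McKay is itself only conjectural in general and known only in fragments (Malle--Sp\"ath, Ruhstorfer, Navarro--Tiep--Vallejo, among others, for certain classical groups and primes). An unconditional proof of Conjecture~\ref{conj:main} will therefore most likely require either a full implementation of the Navarro--Sp\"ath reduction machinery adapted to $p$-rationality levels, or a direct block-theoretic argument bypassing AMN. Both routes pass through a case-by-case verification on quasi-simple groups, which must be carried out via the Deligne--Lusztig parametrization of characters and the known description of defect groups in unipotent and isolated blocks; I expect the prime $p = 2$ to demand separate and more intricate handling, in line with the usual atypical behavior of Sylow $2$-subgroups in classical groups.
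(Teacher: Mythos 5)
First, note that the statement you are addressing is a \emph{conjecture}: the paper does not prove it in general, only special cases (cyclic defect groups in Theorem~\ref{mainthm:cyclic-defect-case}, prime-degree characters in Theorem~\ref{mainthm:prime-degree-case}, blocks of defect at most $2$, and assorted quasisimple examples). Your text, accordingly, is a research plan rather than a proof, and it has a concrete gap even granting its conjectural inputs. The Alperin--McKay--Navarro conjecture supplies an $\mathcal{H}_B$-equivariant bijection $\Irr_0(B)\to\Irr_0(b)$ and hence $\lev(\chi)=\lev(\chi^\ast)$, but it says nothing about how $\chi^\ast$ relates to the class function $\chi_{\bN_G(D)}$; the ``delicate step'' you describe only as a hope --- that $\Omega(\chi)$ can be pinned down as a distinguished constituent of $\chi|_{N}$ whose Galois stabilizer controls that of $\chi$ --- is exactly the missing content, and no mechanism (Kn\"orr--Robinson projections, perfect isometries, character triples) is actually shown to deliver it. The paper makes this precise: Conjecture~\ref{conj:combine} is AMN \emph{augmented} by the requirement $\lev(\chi_{\bN_G(D)})=\lev(\chi^\ast)$, and Theorem~\ref{thm:relation-AMN-main} shows this augmented statement is equivalent to AMN together with Conjecture~\ref{conj:main}. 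So the present conjecture is precisely the surplus over AMN, and it cannot be extracted from AMN alone as your plan suggests. Moreover, no reduction theorem to (almost) quasisimple groups is currently available for this statement; asserting that a minimal counterexample ``is expected'' to reduce via Fong--Reynolds and Sp\"ath's formalism is not an argument, and for blocks (as opposed to $p'$-degree characters) the restriction to $\bN_G(D)$ behaves quite differently from restriction to a Sylow subgroup --- the paper's examples in Subsection~\ref{examples} show $\lev(\chi)\neq\lev(\chi_D)$ can occur, so any such reduction would have to be genuinely block-theoretic.

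Two smaller inaccuracies: your opening reformulation is stronger than the conjecture. The claim $\lev(\chi)=\lev(\chi_{\bN_G(D)})$ concerns only the $p$-part of the conductors, so it is \emph{not} equivalent to equality of the full $\Gal(\QQ_{|G|}/\QQ)$-stabilizers of $\chi$ and $\chi_{\bN_G(D)}$ (the fields $\QQ(\chi)$ and $\QQ(\chi_{\bN_G(D)})$ may well differ at primes other than $p$); the correct equivalence is with stabilizers in $\mathcal{I}$, or in $\mathcal{I}'$ via the automorphisms $\sigma_e$, as in Section~\ref{sec:Galauts}. Finally, for comparison with what is actually done: the paper's evidence proceeds by entirely different means --- Dade's cyclic-defect theory, where the key point is that any element capturing the level of an exceptional character has $p$-part generating $D$ (hence lies in $\bN_G(D)$ up to conjugacy), and for prime-degree characters a primitive/imprimitive dichotomy using the Isaacs--Navarro invariants $\Delta_i$ and $\ell$ together with a case analysis of quasisimple groups --- none of which passes through AMN.
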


\begin{remark}\label{remark-1}
For $g\in G$, let $\lev(\chi(g)):=\nu_p(c(\chi(g)))$ -- the $p$-rationality level of $\chi(g)$. It is easy to see that
\[\lev(\chi)=\max_{g\in G}\{\lev(\chi(g))\},\]
 so there exists $g\in G$ such that $\lev(\chi(g))=\lev(\chi)$.
That is, there exists an element in the group that captures the
$p$-rationality of $\chi$. Conjecture~\ref{conj:main} simply
claims that such an element can be found in $\bN_G(D)$.
\end{remark}

Height-zero characters are well known for their nice behavior with
respect to the global-local principal. Among the first observations of this was in Brauer's height zero
conjecture, recently resolved in \cite{Malle-et-al24}, which states
that all irreducible characters in a block $B$ have height zero if
and only if the defect groups of $B$ are abelian. Another example is the celebrated
Alperin--McKay conjecture. (See, e.g. \cite[Conjecture 9.5]{Navarro18}. See also \cite{Ruhstorfer}, where the conjecture was recently proven for $p=2$.) The Alperin--McKay conjecture asserts that if $b$ is the block of $\bN_G(D)$
corresponding to $B$ in Brauer's first main correspondence, then
there exists a bijection between the height-zero characters in
$B$ and those in $b$. Conjecture~\ref{conj:main} offers another global-local phenomenon
for height-zero characters. In fact, we observe in Section~\ref{sec:AMN-conjecture} a relationship between Conjecture~\ref{conj:main} and the well-known Alperin--McKay--Navarro conjecture, which refines the Alperin--McKay conjecture to further include the action of Galois automorphisms.

Conjecture~\ref{conj:main} is inspired by Navarro-Tiep's conjecture \cite[Conjecture~C]{Navarro-Tiep21}.
In what follows, $\Irr(G)$ denotes the set of irreducible characters
of $G$ and $\Irr_{p'}(G)$ the subset of $\Irr(G)$ consisting of
characters of degree not divisible by $p$. Furthermore, for any positive integer $n$, we use $\QQ_n$ to denote the $n$-th cyclotomic field $\QQ_n:=\QQ(\exp(2\pi i/n))$.

\begin{conj}[\cite{Navarro-Tiep21}, Conjecture~C]\label{conj:Navarro-Tiep1}
Let $p$ be a prime, $G$ a finite group, $P\in\Syl_p(G)$, and
$\chi\in \Irr_{p'}(G)$ with $\lev(\chi)\geq1$. Then
\[\QQ_{p^{\lev(\chi)}}=\QQ_p(\chi_P).\]
\end{conj}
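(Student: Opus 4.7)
The plan is to split the claimed equality $\QQ_{p^{\lev(\chi)}}=\QQ_p(\chi_P)$ into the two opposite inclusions and treat them asymmetrically. The easy inclusion $\QQ_p(\chi_P) \subseteq \QQ_{p^{\lev(\chi)}}$ follows essentially from the definition of the level. Any $g\in P$ has $p$-power order, so the eigenvalues of a representation affording $\chi$ evaluated at $g$ are $p$-power roots of unity, and hence $\chi(g)\in\QQ_{p^\infty}$. Given $\sigma\in\Gal(\QQ_{p^\infty}/\QQ_{p^{\lev(\chi)}})$, extend $\sigma$ to some $\hat\sigma\in\Gal(\overline{\QQ}/\QQ)$ acting trivially on every $p'$-root of unity. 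Since $c(\chi)=p^{\lev(\chi)}m$ with $\gcd(m,p)=1$, the automorphism $\hat\sigma$ fixes the cyclotomic field $\QQ_{c(\chi)}$ pointwise, hence fixes $\chi$ globally, and in particular $\sigma(\chi(g))=\hat\sigma(\chi(g))=\chi(g)$.

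The hard inclusion $\QQ_{p^{\lev(\chi)}} \subseteq \QQ_p(\chi_P)$ is the substantive content, asserting that some $g\in P$ realizes the full $p$-rationality level of $\chi$. I would argue by contradiction: if $\chi_P$ took values in $\QQ_{p^k}$ for some $k<\lev(\chi)$, then any $\sigma\in\Gal(\QQ_{p^\infty}/\QQ_{p^k})$ that is nontrivial on $\zeta_{p^{\lev(\chi)}}$, extended trivially on $p'$-roots of unity, would fix $\chi_P$ without fixing $\chi$. To rule this out, I would invoke the Navarro refinement of the McKay conjecture, which provides a bijection $\Irr_{p'}(G)\to\Irr_{p'}(\norm{G}{P})$ compatible with the action of precisely such Galois automorphisms. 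Transporting $\chi$ to a corresponding $\chi^\ast\in\Irr_{p'}(\norm{G}{P})$, the same $\sigma$ would then fail to fix $\chi^\ast$. But now the setup is essentially local: $P$ is the unique Sylow $p$-subgroup of $\norm{G}{P}$, and since $\chi^\ast$ has $p'$-degree, Clifford theory forces every irreducible constituent of $\chi^\ast_P$ to be a \emph{linear} character of $P$. For linear characters the Galois action on $p$-power roots is transparent, so the $p$-rationality level of $\chi^\ast$ should be readable directly from the levels of its linear $P$-constituents, producing an explicit $g\in P$ with $\sigma(\chi^\ast(g))\neq\chi^\ast(g)$ and thereby the desired contradiction after transferring back to $G$.

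The main obstacles are threefold. First, the Navarro refinement of McKay underpinning this strategy is itself a major open problem in full generality, so any unconditional proof of Conjecture~\ref{conj:Navarro-Tiep1} will either have to proceed conditionally, isolate a weaker Galois-compatibility statement provable on its own, or reduce via the classification of finite simple groups to the quasi-simple case and verify the conjecture there directly. Second, Navarro's conjecture matches Galois \emph{stabilizers} of characters, whereas here one needs to transfer entire \emph{fields of values on a subgroup}; closing this gap requires a genuinely finer global-local compatibility. Third, the inevitable CFSG reduction will be technical for the groups of Lie type in non-defining characteristic, where the construction of $p'$-characters via Deligne--Lusztig theory makes fine control of $p$-rationality levels demanding on a case-by-case basis, particularly for unipotent and semisimple characters lying in quasi-isolated Lusztig series.
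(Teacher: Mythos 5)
The first thing to say is that this statement is not a theorem of the paper at all: it is Conjecture~C of Navarro--Tiep, reproduced as Conjecture~\ref{conj:Navarro-Tiep1} purely as motivation, and it remains open. The paper proves it only in special cases (characters of prime degree, via Theorems~\ref{imprimitive} and \ref{primitive} together with the quasisimple analysis of Section~\ref{sec:quasisimple}), and cites Isaacs--Navarro for the $p$-solvable case. So there is no ``paper proof'' to match, and your proposal should be judged as a proof attempt of an open conjecture. Your easy inclusion $\QQ_p(\chi_P)\subseteq\QQ_{p^{\lev(\chi)}}$ is fine (it is essentially \cite[Lemma~7.1]{Navarro-Tiep21}): for $g\in P$ the value $\chi(g)$ lies in $\QQ_{p^\infty}\cap\QQ_{c(\chi)}=\QQ_{p^{\lev(\chi)}}$.

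The hard inclusion, however, is not proved by your sketch, and the gap is structural rather than technical. First, you invoke the Galois refinement of the McKay conjecture, which is itself open for odd $p$, so at best the argument is conditional. Second, and more seriously, even granting an $\mathcal{H}$-equivariant bijection $\chi\mapsto\chi^\ast$ with $\chi^\ast\in\Irr_{p'}(\bN_G(P))$, equivariance only matches the stabilizers of $\chi$ and $\chi^\ast$ in $\mathcal{H}$, hence gives $\lev(\chi)=\lev(\chi^\ast)$; it says nothing about the restriction $\chi_P$ of the \emph{global} character $\chi$ to $P$, which is the object the conjecture is about. Your contradiction is reached for $\chi^\ast_P$ (where the linear-constituent analysis is indeed easy, since $P\nor\bN_G(P)$ forces linear constituents, and the $p$-solvable case of the conjecture is known), but the final step ``transferring back to $G$'' has no mechanism behind it: there is no known compatibility, conjectural or otherwise, identifying $\QQ_p(\chi_P)$ with $\QQ_p(\chi^\ast_P)$. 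You name this obstacle yourself, but naming it does not close it --- it is exactly the missing content. Indeed this is why the paper's actual contributions proceed differently: the imprimitive case uses the Isaacs--Navarro $\Delta_i$/$\ell(\Psi)$ counting machinery on $\chi_P$ directly (Lemmas~\ref{lem:2}, \ref{lem:3}), and the primitive case is reduced to explicit value computations for prime-degree characters of quasisimple groups on concrete $p$-elements, rather than to any equivariant bijection.
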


Conjecture~\ref{conj:Navarro-Tiep1} suggests that if a $p'$-degree character $\chi$
has $p$-rationality level at least 2, then its level remains
unchanged when restricted to a Sylow $p$-subgroup:
$\lev(\chi)=\lev(\chi_{P})$. Recall that a $p'$-degree character is a height-zero character lying in a block of maximal defect, which means that the defect groups
are the Sylow $p$-subgroups of $G$. In such a case,
Conjecture~\ref{conj:main} only asserts that
$\lev(\chi)=\lev(\chi_{\bN_G(P)})$. However, it is important to note
that for height-zero characters in general, $\lev(\chi)$ does not
always equal $\lev(\chi_{D})$, see the examples in Subsection~\ref{examples}. We refer the reader to \cite{Navarro-Tiep21,Isaacs-Navarro22} for
further discussion on
Conjecture~\ref{conj:Navarro-Tiep1}.

What evidence do we have for Conjecture~\ref{conj:main}? Our first
main result confirms the cyclic-defect case.

\begin{theorema}\label{mainthm:cyclic-defect-case}
Let $p$ be a prime and $G$ a finite group. Let $B\in \Bl(G)$ be a
$p$-block of $G$ with cyclic defect group $D$ and $\chi\in\Irr(B)$.
Then $\lev(\chi)=\lev(\chi_{\bN_G(D)})$.
\end{theorema}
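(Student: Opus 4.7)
The inequality $\lev(\chi_{\bN_G(D)}) \leq \lev(\chi)$ is immediate, since the inclusion of value sets $\{\chi(h):h\in\bN_G(D)\}\subseteq\{\chi(g):g\in G\}$ gives $c(\chi_{\bN_G(D)})\mid c(\chi)$. For the reverse inequality, by Remark~\ref{remark-1} it suffices to find $h\in\bN_G(D)$ with $\lev(\chi(h))=\lev(\chi)$. If $\lev(\chi)=0$ take $h=1$; otherwise set $k:=\lev(\chi)\geq 1$. The restriction of $\chi$ to the $p$-regular elements of $G$ is a $\ZZ$-linear combination of Brauer characters, whose values are cyclotomic of conductor coprime to $p$, so all values of $\chi$ on $p$-regular elements have level $0$. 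Hence $k$ must be realized on some $g=su$ with $1\neq s:=g_p$ and $u:=g_{p'}$ a $p$-regular element of $\bC_G(s)$, and by Brauer's second main theorem we may take $s\in D$.

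The plan is to show one may further take $u=1$, so that $h:=s\in D\leq\bN_G(D)$ already witnesses the level. Let $b_s$ be the unique block of $\bC_G(s)$ with $b_s^G=B$; by cyclicity of $D$, $b_s$ again has cyclic defect group $D$. Brauer's second main theorem gives
\[
\chi(su)\;=\;\sum_{\phi\in\IBr(b_s)} d^{s}_{\chi\phi}\,\phi(u),
\]
and since the values $\phi(u)$ are $p$-rational, the $p$-part of the rationality of $\chi(su)$ is controlled by the generalized decomposition numbers $d^{s}_{\chi\phi}$. By Dade's structure theorem for blocks with cyclic defect, these numbers are, up to sign, rational integers when $\chi$ is non-exceptional (forcing $\lev(\chi)=0$ in that case); for an exceptional character $\chi=\chi_\Lambda$ parametrized by an $E$-orbit $\Lambda\subseteq\hat D\setminus\{1\}$ --- where $E$ denotes the inertial quotient of $B$, a cyclic subgroup of $\Aut(D)$ of order dividing $p-1$ --- they are, up to sign, partial sums of the roots of unity $\lambda(s^{-1})$ for $\lambda\in\Lambda$, with the precise grouping dictated by the Brauer tree. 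Setting $u=1$ and weighting by the positive integers $\phi(1)$, a cyclotomic non-cancellation argument then shows
\[
\lev(\chi(s)) \;=\; \lev\!\Bigl(\sum_{\lambda\in\Lambda}\lambda(s^{-1})\Bigr).
\]
Finally, choosing $s\in D$ so that $\sum_{\lambda\in\Lambda}\lambda(s^{-1})$ attains the level $k=\lev(\chi_\Lambda)$ --- a purely combinatorial statement about the Galois action of $(\ZZ/|D|\ZZ)^{\times}$ on $\hat D$ together with the stabilizer of the orbit $\Lambda$ --- yields the desired $h=s\in\bN_G(D)$.

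The hard part is the cyclotomic non-cancellation argument: ruling out that $\sum_\phi d^{s}_{\chi\phi}\phi(1)$ could collapse to a cyclotomic integer of strictly smaller $p$-adic conductor than its individual summands. Positivity of the Brauer degrees $\phi(1)$, combined with Dade's explicit description of the generalized decomposition numbers for cyclic defect blocks, should prevent such a collapse. The argument genuinely relies on the cyclicity of $D$: for general abelian defect one can have $\lev(\chi)\neq\lev(\chi_D)$, as illustrated by the examples in Subsection~\ref{examples}.
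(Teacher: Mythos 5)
Your reduction steps are fine (values on $p$-regular elements have level $0$; the level is attained at some $g=su$ with $s=g_p$ conjugate into $D$ and $u=g_{p'}\in\bC_G(s)$), but the pivotal step --- ``one may further take $u=1$'' --- is false, and with it the displayed identity $\lev(\chi(s))=\lev\bigl(\sum_{\lambda\in\Lambda}\lambda(s^{-1})\bigr)$ and the hoped-for non-cancellation argument. What controls $\chi(s)$ is not the orbit sum over the inertial quotient $E$ but (by Dade's value formula, equation (3.1) of the paper with $i=0$, $y=1$) the orbit sum of $\lambda$ over all of $\bN_G(D)/\bC_G(D)$, weighted by the single degree $\varphi_0(1)$: elements of $\bN_G(D)$ outside $E$ move $\lambda$ while permuting Brauer characters of equal degree, so at $u=1$ the generalized decomposition numbers can cancel $p$-adically. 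Concretely, let $G$ be the dicyclic group of order $24$ ($\langle a,b\rangle$ with $|a|=12$, $b^2=a^6$, $a^b=a^{-1}$) and $p=2$. The four faithful degree-$2$ characters $\chi_j$, $\chi_j(a^k)=\zeta_{12}^{jk}+\zeta_{12}^{-jk}$ ($j=1,2,4,5$), form a single $2$-block $B$ of defect $2$ whose defect groups are cyclic, $D=\langle a^3\rangle\cong C_4$ (every order-$4$ subgroup of the quaternion Sylow $2$-subgroup is cyclic). For $\chi=\chi_1$ one has $\chi(a)=\sqrt3$, so $\lev(\chi)=2$, yet $\chi$ takes only the values $\pm2,0$ on $D$, so $\lev(\chi_D)=0$: here $\bN_G(D)/\bC_G(D)\cong C_2$ acts by inversion, and the two decomposition numbers $\pm i$ at a generator $s$ of $D$ cancel when paired with $\phi_1(1)=\phi_2(1)=1$; the level $2$ is only recovered at $su$ with $u=a^4\neq1$, where $\phi_1(u)\neq\phi_2(u)$. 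This is exactly the phenomenon flagged in Subsection~\ref{examples}: $\lev(\chi)\neq\lev(\chi_D)$ can occur even with cyclic $D$, so no positivity argument can rescue the $u=1$ claim.

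The paper's proof never tries to kill $u$. It first shows, via the $\mathcal{H}_B$-equivariance of Dade's correspondence and the parametrization of exceptional characters as induced from $\bC_G(\widetilde D)$, that $\lev(X_\lambda)=\lev(\lambda)$ whenever $\lev(X_\lambda)\geq 1$; then, using the value formula, it shows that any $g$ with $\lev(\chi(g))=\lev(\chi)\geq 1$ must have $g_p$ generating $D$ (if $g_p$ lay in the proper subgroup $D_1$, every summand $\lambda^h(g_p)$ would already have strictly smaller level). The conclusion $g=g_pg_{p'}\in\bN_G(D)$ then follows because $g_{p'}\in\bC_G(g_p)=\bC_G(D)$ --- with $g_{p'}$ in general nontrivial, since the $p'$-part is what carries the $p$-irrationality past the cancellation you would need to exclude. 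To repair your argument you would have to replace the search for $s\in D$ by a search for a mixed element $su$ with $u\in\bC_G(D)$ of $p'$-order, which is essentially the paper's route.
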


\begin{remark}
The assumption on $\lev(\chi)$ in Conjecture~\ref{conj:main} is
essential. There are many examples with $\lev(\chi)=1$ but
$\lev(\chi_{\bN_G(D)})=0$, see again Subsection~\ref{examples}.
Theorem~\ref{mainthm:cyclic-defect-case}, however, shows that this
cannot occur when a defect group $D$ is cyclic.
\end{remark}

Our next result solves Conjecture~\ref{conj:Navarro-Tiep1} for
prime-degree characters, and therefore confirms
Conjecture~\ref{conj:main} for  characters whose degree is a prime different
from $p$.

\begin{theorema}\label{mainthm:prime-degree-case}
Let $p$ be a prime and $G$ a finite group. Let $\chi\in \Irr(G)$ be
of prime degree not equal to $p$ with $\lev(\chi)\geq 2$. Let
$P\in\Syl_p(G)$. Then $\QQ_{p^{\lev(\chi)}}=\QQ_p(\chi_P)$. In
particular, $\lev(\chi)=\lev(\chi_{\bN_G(P)})=\lev(\chi_{P})$.
\end{theorema}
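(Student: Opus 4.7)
The plan is to establish Navarro--Tiep's Conjecture~\ref{conj:Navarro-Tiep1} under the additional hypothesis that $\chi(1)=q$ is prime, from which the ``in particular'' conclusions of Theorem~\ref{mainthm:prime-degree-case} follow via the sandwich $\QQ_p(\chi_P)\subseteq\QQ_p(\chi_{\bN_G(P)})\subseteq\QQ_p(\chi)$. Since $\chi(1)=q\neq p$ we have $\chi\in\Irr_{p'}(G)$, placing us directly in the setting of Conjecture~\ref{conj:Navarro-Tiep1}. The containment $\QQ_p(\chi_P)\subseteq\QQ_{p^{\lev(\chi)}}$ is automatic (values of $\chi_P$ lie in a $p$-power cyclotomic field and $\lev(\chi_P)\le\lev(\chi)$ trivially), so the content is the reverse. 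In Galois terms, I must show that whenever $\sigma$ is a Galois automorphism whose action on $\QQ_{p^{\lev(\chi)}}$ over $\QQ_p$ is nontrivial, the restriction $\chi^\sigma|_P$ differs from $\chi|_P$.

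After the standard reduction of passing from $G$ to $G/\ker\chi$ (which preserves both $\lev(\chi)$ and $\lev(\chi_P)$), I would split on primitivity. If $\chi$ is induced, the hypothesis $\chi(1)=q$ prime forces $\chi=\lambda^G$ with $\lambda\in\Irr(H)$ linear and $[G:H]=q$. Since $p\ne q$, a Sylow $p$-subgroup $P$ can be placed inside $H$ after conjugating $H$; writing $\lambda=\lambda_p\lambda_{p'}$, restriction to $P$ kills the $p'$-part, and $\lambda_p|_P$ has the same order as $\lambda_p$ because $P$ surjects onto the largest $p$-quotient of $H$. Galois action gives $\chi^\sigma=(\lambda^\sigma)^G$, and $\chi^\sigma=\chi$ precisely when $\lambda^\sigma$ is $G$-conjugate to $\lambda$. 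The goal is to show that if $\chi^\sigma$ and $\chi$ agree on $P$ then already $\lambda^\sigma\sim_G\lambda$; I expect this to follow by expanding $(\lambda^G)|_P$ via Mackey's formula into $\lambda|_P$ plus contributions indexed by the nontrivial $(H,P)$-double cosets, then using the hypothesis $\lev(\chi)\geq 2$ to isolate the $\lambda|_P$ term.

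The primitive case is expected to be the main obstacle. For $\chi$ faithful, primitive, of prime degree $q$, the generalized Fitting subgroup $F^*(G)$ is tightly constrained: either $F^*(G)$ is elementary abelian (forcing $G$ into a specific semidirect product), or $F^*(G)$ is a central product of quasi-simple groups, leading via Clifford theory to an essentially (quasi-)simple situation. I would invoke the classical (and short) list of finite simple groups admitting a character of prime degree, and verify the conjecture family by family: for alternating groups via the Murnaghan--Nakayama rule and the $p$-core/$p$-quotient combinatorics; for groups of Lie type via Deligne--Lusztig theory together with the Galois-equivariant machinery developed in \cite{Navarro-Tiep21}; and for sporadic and low-rank exceptional cases by direct character-table inspection. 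The hard part will be uniformly verifying that the $p$-part of the conductor, which in Lie-type examples can be encoded by subtle Deligne--Lusztig invariants, is actually witnessed on the $p$-elements lying in $P$.
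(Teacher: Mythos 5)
Your overall architecture (imprimitive case via Mackey with $P$ placed inside the index-$q$ subgroup; primitive case via the structure of primitive linear groups of prime degree and a reduction to quasisimple groups checked against the classification) mirrors the paper's, but as written the proposal has two genuine gaps. In the imprimitive case, the step ``use $\lev(\chi)\geq 2$ to isolate the $\lambda|_P$ term'' is exactly the point that needs a mechanism, and none is given: the Mackey expansion of $(\lambda^G)_P$ contains, besides the terms $(\lambda^x)_P$ from double cosets with $P\subseteq H^x$, genuinely induced characters $\bigl((\lambda^x)_{H^x\cap P}\bigr)^P$ from proper subgroups of $P$, and nothing in your sketch prevents these from carrying or cancelling the level-$a$ constituents. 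The paper's argument runs through the Isaacs--Navarro invariants $\Delta_i$ and $\ell$: constituents of level $\geq 2$ of a character induced from a proper subgroup of $P$ contribute $0$ modulo $p$ to $\Delta_i(\cdot)(1)$, and the number of double cosets with $P\subseteq H^x$ is prime to $p$, which together give $\ell(\chi_P)=\ell(\lambda_P)=\lev(\lambda)\geq a$ and then, counting linear constituents of level $a$ modulo $p$, the full field equality $\QQ_p(\chi_P)=\QQ_{p^a}$. Without these counting lemmas your conclusion does not follow; also note your assertion that $\chi^\sigma=\chi$ \emph{precisely} when $\lambda^\sigma$ is $G$-conjugate to $\lambda$ is not justified (equality of induced characters does not in general force conjugacy of the inducing linear characters), though this is peripheral.

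In the primitive case the proposal defers precisely the substantive work. First, the reduction from the almost quasisimple group $G$ to its quasisimple layer $M$ is not merely ``essentially quasisimple'': one has $\lev(\chi)=\max\{\lev(\chi_M),\lev(\mu),\lev(\lambda)\}$ for a central linear character $\mu$ and a linear character $\lambda$ of $G/N$, and when the level is carried by $\lambda$ one must transfer it to $P$ via the factorization $\chi=\widehat{\varphi}\lambda$ with the canonical extension $\widehat{\varphi}$, using that $\widehat{\varphi}$ does not vanish on $p$-elements; verifying the statement for $M$ alone is not enough. Likewise the non-solvable case with a normal (almost) extraspecial $r$-subgroup is not settled by ``a specific semidirect product''; the paper needs $\QQ(\chi)=\QQ(\chi_{\bZ(G)})$ there. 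Second, for the quasisimple groups themselves you explicitly flag as ``the hard part'' the verification that the $p$-part of the conductor is witnessed on $p$-elements of $P$ --- but that \emph{is} the core content: for the Weil characters of $\SL_2(q)$, $\PSL_n(q)$ and $\PSU_n(q)$ one must either exhibit explicit semisimple $p$-elements whose character values have full level (via the Weil character value formulas and $\sigma_e$-stability) or, for $\PSL_n(q)$, realize $\chi$ as Harish-Chandra induced from a parabolic of $p'$-index and fall back on the imprimitive argument; the remaining families are shown to have level at most $1$. A plan that names these families but does not carry out (or correctly reduce) these computations is not yet a proof.
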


The proof of Theorem \ref{mainthm:cyclic-defect-case} is based on Dade's cyclic-defect theory \cite{Dade66,Dade96}. When the defect groups of $B$ are cyclic, the set $\Irr(B)$ is naturally partitioned into two types: \emph{exceptional} characters and \emph{non-exceptional} characters, see Subsection~\ref{subsec:generalities}. While the characters of the latter type are always $p$-rational, we show that the $p$-rationality level of an  exceptional character aligns with that of its associated (linear) character of the defect group $D$. Another key step is to show that if an element $g\in G$ captures the $p$-rationality of a character $\chi$ in a block with cyclic defect groups, then its $p$-part $g_p$ generates a defect group of the block. This is done in Section~\ref{sec:cyclic-defect}.

The proof of Theorem~\ref{mainthm:prime-degree-case} is divided into two fundamentally different cases, depending on whether the character in question is primitive or imprimitive. The imprimitive case builds on ideas from the recent solution of Conjecture~\ref{conj:Navarro-Tiep1} for
$p$-solvable groups \cite{Isaacs-Navarro22}, as detailed in Section~\ref{sec:imprimitive}. In contrast, the primitive case is reduced to analyzing the values of prime-degree characters of quasisimple groups, which is addressed in Sections~\ref{sec:quasisimple} and \ref{sec:primitive}.

Subsection~\ref{sec:further-examples} provides additional evidence supporting Conjecture~\ref{conj:main} for certain almost quasisimple groups. Finally, Section~\ref{sec:AMN-conjecture} discusses some consequences and examples related to Conjecture~\ref{conj:main}, along with its connection to the well-known Alperin-McKay-Navarro conjecture.

\begin{acknowledgement}
The authors are grateful to Thomas Breuer, Gabriel Navarro, and Benjamin Sambale for several helpful conversations on topics related to this work. Part of this work was
completed while the first author was visiting the Department of
Mathematics at the University of Denver. He thanks the department
and faculty members Alvaro Arias, Mandi Schaeffer Fry, Michael
Kinyon, and Petr Vojtechovsky for their hospitality.
\end{acknowledgement}



\section{Galos automorphisms and $p$-rationality level}\label{sec:Galauts}

Here we discuss briefly the relationship between the $p$-rationality level of a character and the action of Galois automorphisms. The notation here will often be referred to throughout.

Let $G$ be a finite group and suppose that $|G|=n=p^bm$ with $(p,m)=1$. Let
$\mathcal{G}:=\mathrm{Gal}(\QQ_n/\QQ)$. Then
\[\mathcal{G}\cong\mathcal{I}\times \mathcal{K},\] where
\[\mathcal{I}=\mathrm{Gal}(\QQ_n/\QQ_m) \text{ and }
\mathcal{K}=\mathrm{Gal}(\QQ_n/\QQ_{p^b})\] are the subgroups of
$\mathcal{G}$ of those automorphisms fixing $p'$-roots and $p$-power
roots, respectively, of unity. Let \[\mathcal{H}:=\mathcal{I}\times
\langle \sigma\rangle,\] where $\sigma\in \mathcal{K}$ is such that its
restriction to $\QQ_m$ is the Frobenius automorphism $\zeta \mapsto
\zeta^p$. The group $\mathcal{H}$ is an important ingredient in the McKay--Navarro and Alperin--McKay--Navarro Conjectures \cite{Navarro04}, which we will discuss further in Section \ref{sec:AMN-conjecture}.

It is well known that the Galois group $\mathcal{G}$
permutes the $p$-blocks of $G$. Let $B$ be a $p$-block of $G$ and
$\mathcal{H}_B$ be the subgroup of $\mathcal{H}$ fixing $B$. Since
$\mathcal{I}$ point-wisely fixes $\QQ_m$, it fixes every Brauer
character and thus every block of $G$. In particular,
\[\mathcal{I}\leq \mathcal{H}_B.\]

We define $\mathcal{I}':=\mathcal{I}$ if $p=2$ and
$\mathcal{I}':=\mathrm{Gal}(\QQ_{n}/\QQ_{pm})$ if $p>2$. Note that
$\mathcal{I}'$ is the Sylow $p$-subgroup of $\mathcal{I}$. Note also that if $H_1$ and $H_2$ are groups with orders dividing $n$, then characters $\chi_1$ of $H_1$ and $\chi_2$ of $H_2$ have the same $p$-rationality level whenever they have the same stabilizer under $\mathcal{I}$. Further, the same can be said, replacing $\mathcal{I}$ with $\mathcal{I}'$, with the added assumption that  $\lev(\chi_i)\geq 1$ for $i=1,2$ if $p>2$.

In fact, as pointed out in \cite[Section~4]{Navarro-Tiep21}, there is one particular element of $\mathcal{I}'$ that captures this behavior. 
Namely, for $e\in\ZZ_{\geq 1}$, let $\sigma_e$ be the element of
 $\mathcal{I}'$ mapping any $p$-power root of unity
$\omega$ to $\omega^{1+p^e}$. The Galois automorphism $\sigma_e$ has been seen to play a pivotal role in consequences of the McKay--Navarro conjecture predicting global-local properties of finite groups, and it turns out that the stability of a character under $\sigma_e$ is closely
tied to its $p$-rationality level (see \cite[Lemma~4.1]{Navarro-Tiep21}).


\section{Blocks of cyclic defect}\label{sec:cyclic-defect}

The goal of this section is to prove Theorem~\ref{mainthm:cyclic-defect-case}.

\subsection{Small-defect blocks} We begin with an elementary upper bound for the $p$-rationality
level in terms of a defect group, which allows us to easily control
the level of characters in blocks of small defect.

Recall that if $B$ is a $p$-block of a finite group $G$ then its \emph{defect}
$d(B)$ is the nonnegative integer
\[{d(B)}:=\nu(|G|)-\min_{\psi\in\Irr(B)}\{\nu(\psi(1))\}.\] Moreover, the order of any defect group $D$ of $B$ is $|D|=p^{d(B)}$. We will also denote by $\mathrm{exp}(D)$ the exponent of $D$.

Throughout, for an integer $n$, we will write $n_p$ and $n_{p'}$ for its $p$- and $p'$-parts, respectively, so that $n=n_pn_{p'}$, $n_p$ is a power of $p$, and $(p, n_{p'})=1$. Similarly, for an element $g$ of a finite group $G$, we will write $g_p$ and $g_{p'}$ for the (unique) elements such that $g=g_pg_{p'}$ with $|g_p|=|g|_p$ and $|g_{p'}|=|g|_{p'}$.  

\begin{lemma}\label{lem:1}
Let $\chi\in\Irr(G)$ and $B$ the $p$-block of $G$ containing $\chi$.
Let $D$ be a defect group of $B$. Then $\lev(\chi)\leq
\nu(\exp(D))$. In particular, $\lev(\chi)\leq d(B)$.
\end{lemma}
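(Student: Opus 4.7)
The plan is to combine elementary reasoning about conductors of character values with the classical consequence of Brauer's second main theorem that characters in a $p$-block vanish outside the $D$-sections: if $\chi\in\Irr(B)$ and $\chi(g)\ne 0$, then the $p$-part $g_p$ is $G$-conjugate to an element of $D$. (This follows from Brauer's second main theorem, since $\chi(u y)$ is expressed via generalized decomposition numbers $d^u_{\chi\varphi}$ with $\varphi$ in a block $b$ of $\Centralizer_G(u)$ with $b^G=B$, which forces $\langle u\rangle$ to lie in some defect group of $B$.) This vanishing is the only substantive input; the rest is bookkeeping.

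First I would analyze $c(\chi(g))$ for a single $g\in G$. Since $\chi(g)$ is a sum of $|g|$-th roots of unity, $\chi(g)\in\QQ_{|g|}$, so $c(\chi(g))$ divides $|g|=|g_p|\,|g_{p'}|$ and in particular its $p$-part divides $|g_p|$. If $\chi(g)=0$, nothing is contributed. Otherwise, by the vanishing statement above $g_p$ is $G$-conjugate into $D$, and hence $|g_p|$ divides $\exp(D)$. Thus in every case the $p$-part of $c(\chi(g))$ divides $\exp(D)$, i.e. $\lev(\chi(g))\leq \nu(\exp(D))$.

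Next I would invoke Remark~\ref{remark-1}, which gives $\lev(\chi)=\max_{g\in G}\lev(\chi(g))$. Taking the maximum over $g\in G$ of the bound from the previous paragraph yields
\[
\lev(\chi)\leq \nu(\exp(D)),
\]
which is the first assertion. The ``in particular'' statement is then immediate: $\exp(D)$ divides $|D|=p^{d(B)}$, so $\nu(\exp(D))\leq d(B)$.

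The only real obstacle is ensuring that the $D$-section vanishing statement is applied correctly; this is standard (for example it appears as a direct corollary of Brauer's second main theorem in Navarro's book on characters and blocks), so I do not anticipate any technical difficulty in the write-up.
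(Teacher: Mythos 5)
Your proof is correct and follows essentially the same route as the paper: both rest on the fact (a corollary of Brauer's second main theorem, \cite[Corollary~5.9]{nbook}) that $\chi(g)\neq 0$ forces $g_p$ to lie in a conjugate of $D$, then bound the $p$-part of the conductor of the values by $\exp(D)$. The paper phrases this as $\QQ(\chi)\subseteq\QQ_{\exp(D)|G|_{p'}}$ rather than maximizing $\lev(\chi(g))$ over $g$ via Remark~\ref{remark-1}, but this is only a cosmetic difference.
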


\begin{proof}
Let $g\in G$ with $\chi(g)\neq 0$. Then $g_p$ 
belongs to a conjugate of $D$ by \cite[Corollary~5.9]{nbook}.
Therefore $|g|_p\leq \exp(D)$, and we have
\[\chi(g)\in\QQ_{|g|}\subseteq \QQ_{|g|_p|g|_{p'}}\subseteq
\QQ_{\exp(D)|G|_{p'}}.\] We now have $\QQ(\chi)\subseteq
\QQ_{\exp(D)|G|_{p'}}$, which implies that $c(\chi)$ divides
$\exp(D)|G|_{p'}$, and the lemma follows.
\end{proof}

\begin{corollary}\label{thm:defect-zero-char}
Let $\chi\in\Irr(G)$ belong to a $p$-block of defect 0, or defect 1
if $p=2$. Then $\lev(\chi)=0$.
\end{corollary}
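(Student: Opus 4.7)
The plan is to obtain both cases as immediate consequences of Lemma~\ref{lem:1}, with one small extra observation needed when $p=2$. In the defect-zero case, the defect group is trivial, so $\exp(D)=1$ and Lemma~\ref{lem:1} yields $\lev(\chi)\leq \nu_p(1)=0$ at once, forcing $\lev(\chi)=0$.

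For $p=2$ and $d(B)=1$, the defect group has order $2$, so $\exp(D)=2$ and Lemma~\ref{lem:1} gives only the weaker bound $\lev(\chi)\leq 1$. To close the gap I would invoke the elementary fact that $\QQ_{2m}=\QQ_m$ whenever $m$ is odd, which follows from the observation that any primitive $2m$-th root of unity can be written as $-\zeta_m^{a}$ for a suitable $a$ coprime to $m$. If the conductor $c(\chi)$ were of the form $2m$ with $m$ odd, then the values of $\chi$ would already lie in the strictly smaller cyclotomic field $\QQ_m$, contradicting the minimality of $c(\chi)$. Hence no irreducible character of any finite group can have $2$-rationality level equal to $1$, and together with the bound $\lev(\chi)\leq 1$ this forces $\lev(\chi)=0$.

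There is essentially no obstacle beyond recording this cyclotomic observation at $p=2$; both halves of the corollary reduce to a direct application of Lemma~\ref{lem:1}, with the $p=2$ case using only that level $1$ is a forbidden value at the prime $2$.
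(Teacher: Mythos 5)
Your proposal is correct and follows the paper's own argument: both cases are read off from Lemma~\ref{lem:1}, and the $p=2$, defect-one case is closed by the observation that $\QQ_{2m}=\QQ_m$ for odd $m$, so no character can have $2$-rationality level exactly $1$. The extra justification you give for that cyclotomic fact is fine but not a different method.
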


\begin{proof}
This follows from Lemma~\ref{lem:1}. Note that $\QQ_n=\QQ_{2n}$ for
odd $n$, so there are no characters of $2$-rationality level $1$.
\end{proof}

\begin{lemma}\label{thm:block-defect-one-odd-p}
Let $p$ be an odd prime. Suppose $\chi\in\Irr(G)$ belongs to a
$p$-block of defect one with a defect group $D$. Then
$\lev(\chi)\in\{0,1\}$ and $\lev(\chi)=\lev(\chi_{\bN_G(D)})$.
\end{lemma}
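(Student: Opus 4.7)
The plan is to deduce both assertions with essentially no new input, relying on Lemma~\ref{lem:1} together with the standard fact (\cite[Corollary~5.9]{nbook}) that a block character vanishes off the elements whose $p$-part lies in a defect group.

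Since $B$ has defect $d(B)=1$, the defect group $D$ has order $p$, so $\exp(D)=p$. The first claim $\lev(\chi)\in\{0,1\}$ is then an immediate consequence of Lemma~\ref{lem:1}, which gives $\lev(\chi)\leq\nu(\exp(D))=1$.

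For the equality $\lev(\chi)=\lev(\chi_{\bN_G(D)})$, I would split into the two inequalities. The upper bound $\lev(\chi_{\bN_G(D)})\leq\lev(\chi)$ is automatic because the values of $\chi_{\bN_G(D)}$ form a subset of the values of $\chi$, so $\QQ(\chi_{\bN_G(D)})\subseteq\QQ(\chi)$, and hence $c(\chi_{\bN_G(D)})$ divides $c(\chi)$. This already settles the case $\lev(\chi)=0$, and I would reduce to $\lev(\chi)=1$. In that case, the strategy is to exhibit a single element of $\bN_G(D)$ on which $\chi$ attains a non-$p$-rational value. Using Remark~\ref{remark-1}, pick $g\in G$ with $\lev(\chi(g))=1$; in particular $\chi(g)\neq 0$. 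Then \cite[Corollary~5.9]{nbook} places $g_p$ in some conjugate $D^h$. Since $\chi(g)\notin\QQ_m$ forces $g_p\neq 1$, and $|D|=p$ is prime, $g_p$ must in fact generate $D^h$. Consequently $g_{p'}\in\bC_G(g_p)=\bC_G(D^h)\leq\bN_G(D^h)$, so together with $g_p\in D^h$ one obtains $g\in\bN_G(D^h)=\bN_G(D)^h$. Thus $\chi(g)=\chi(g^{h^{-1}})$ is attained on $\bN_G(D)$, giving $\lev(\chi_{\bN_G(D)})\geq 1$ and completing the argument.

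I do not foresee a real obstacle here: the whole plan rides on the two facts that $\chi(g)\neq 0$ forces $g_p$ into a defect group and that a cyclic group of prime order is generated by any nontrivial element. The odd-prime hypothesis is not actually used in any step of the argument; its role is merely to keep the statement substantive, since Corollary~\ref{thm:defect-zero-char} already yields the conclusion when $p=2$.
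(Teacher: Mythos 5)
Your argument is correct, but it takes a genuinely different and more elementary route than the paper. The paper proves this lemma by invoking Brauer's classical theory of defect-one blocks: it introduces $K=\bO_{p'}(\bN_G(D))$, the canonical character of a root of $B$, the inertial quotient, and the exceptional/non-exceptional dichotomy, and uses the induced-character formula $\chi(hk)=\epsilon(\lambda\times\xi)^{\bN_G(D)}(hk)$ for $h\in D-\{1\}$, $k\in K$ to conclude the stronger statement $\QQ(\chi)=\QQ(\chi_{\bN_G(D)})$. You instead use only Lemma~\ref{lem:1}, the nonvanishing criterion from \cite[Corollary~5.9]{nbook}, and the fact that a nontrivial element of a group of prime order generates it, to transport an element $g$ with $\lev(\chi(g))=1$ into $\bN_G(D)$ up to conjugacy; the step $g_p\neq 1$ is justified correctly, since a $p$-regular $g$ has $\chi(g)\in\QQ_{|G|_{p'}}$. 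This is in fact the same "capturing element lies in the defect normalizer" mechanism the paper uses later in the general cyclic-defect case (Theorem~\ref{thm:cyclic-defect-case}), where Dade's character formula is needed to show $g_p$ generates $D$; in defect one that is automatic because $|D|=p$, which is exactly the simplification you exploit. What each approach buys: the paper's route yields the full field equality $\QQ(\chi)=\QQ(\chi_{\bN_G(D)})$ and situates the lemma inside its cyclic-defect framework, while yours is shorter, avoids the defect-one structure theory entirely, and proves precisely what the lemma asserts (equality of levels). Your closing remark is also accurate: the odd-prime hypothesis is not needed for your argument, and for $p=2$ the conclusion already follows from Corollary~\ref{thm:defect-zero-char} since level $1$ cannot occur.
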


\begin{proof}
The first conclusion again follows from Lemma~\ref{lem:1},  so it
remains to prove the second part. We have $\chi(1)_p=|G|_p/p$ and
$\height(\chi)=0$. $p$-Blocks of defect one have been fully
described in early work of Brauer \cite{Brauer42a,Brauer42b} (see
also \cite[Chapter 11]{nbook}). Let $K:=\bO_{p'}(\bN_G(D))$. Let
$b\in \Bl(\bC_G(D))$ be a root of $B$ and $\xi\in \Irr(K)$ be the
restriction (to $K$) of the canonical character in $\Irr(b)$ of $B$.
In fact, $\xi$ is the unique Brauer character in $b$. Let $E$ be the
subgroup of $\bN_G(D)$ fixing $b$ and $\overline{E}:=E/\bC_G(D)$ the
inertial quotient of $B$ and $e:=|\overline{E}|$. The block $B$ then
contains precisely $e+(p-1)/e$ ordinary irreducible characters, where $e$
of these are $p$-rational
and therefore trivially satisfy the stated equality.

Suppose that $\chi$ is one of the remaining $(p-1)/e$ other characters, a
so-called exceptional character. In this case, there exists
$\lambda\in \Irr(D)-\{1_D\}$ and $\epsilon\in\{\pm 1\}$ such that
\[
\chi(hk)=\epsilon(\lambda\times \xi)^{\bN_G(D)}(hk)
\]
for every $h\in D-\{1\}$ and $k\in K$, by \cite[Chapter 11]{nbook}. Note that $\chi(g)=0$
whenever $g_p$ is not conjugate to an element in $D$. Also, for each
$h\in D-\{1\}$, a $p'$-element in $\bC_G(h)$ must be inside $K$.
Therefore these elements $hk$ capture all the non-zero values of
$\chi$. It follows that $\QQ(\chi)=\QQ(\chi_{D\times K})$, and thus
$\QQ(\chi)=\QQ(\chi_{\bN_G(D)})$, as desired.
\end{proof}


\subsection{Generalities on blocks with cyclic defect groups}\label{subsec:generalities} To prove Theorem~\ref{mainthm:cyclic-defect-case} for blocks of
larger cyclic defect, we need to recall some basics on
cyclic-defect theory, and refer the reader to \cite{Dade66}
and \cite[Chapter VII]{Feit82} for more details. The theory,
developed by E.~Dade in the sixties, generalizes Brauer's work on
defect-one blocks mentioned above.

Let $B$ be a block of a finite group $G$ with cyclic defect group
$D$ of order $p^a$ ($a\geq 1$). Let $B_0\in\Bl(\bN_G(D))$ be the
Brauer correspondent of $B$ and $b_0\in\Bl(\bC_G(D))$ be a root of
$B$; i.e. $b_0^{\bN_G(D)}=B_0$. Let $C:=\bC_G(D)$ and let $E$ be the subgroup of
$\bN_G(D)$ fixing $b_0$. The inertial quotient $E/C$ is then a
cyclic group of $p'$-order acting Frobeniusly on $D$ (as well as
$\Irr(D)$). Let $\Lambda$ be a complete set of representatives of
the action of $E$ on $\Irr(D)-\{1_D\}$. Then $|\Lambda|=(p^a-1)/e$,
where $e:=|E/C|$.

If $|\Lambda|=1$, then $D$ must have order $p$ and $B$ has
precisely $e+1$  irreducible  ordinary characters. Suppose that
$|\Lambda|>1$. Then $\Irr(B)$ is partitioned into two naturally
defined subsets $\Irr_{nex}(B)$ and $\Irr_{ex}(B)$. The former
consists of precisely $e$ \emph{non-exceptional characters}
$\{X_1,...,X_e\}$. The latter consists of precisely $|\Lambda|$
\emph{exceptional characters}, which are naturally labeled by the
members of $\Lambda$: \[\Irr_{ex}(B)=\{X_\lambda\mid
\lambda\in\Lambda\}.\]

As noted in \cite[p.~1135]{Navarro04}, the group $\mathcal{H}_B$
permutes the exceptional/non-exceptional characters among themselves.

For $0\leq i\leq a$, let $D_i$ be the (unique) subgroup of $D$ containing the elements of order at most $p^{a-i}$; that is, 
\[D_i \text{ is the subgroup of } D \text{ with }
|D:D_i|=p^i.\] Let \[C_i:=\bC_G(D_i) \text{ and } N_i:=\bN_G(D_i).\] Assume now that
$0\leq i\leq a-1$. By \cite[Proposition 1.6]{Dade66},  the block
$b_i:=(b_0)^{C_i}\in\Bl(C_i)$ contains a unique Brauer character,
say $\varphi_i$. By \cite[Corollary 1.9]{Dade66}, for $x\in
D_i-D_{i+1}$, and $y$ a $p$-regular element of $C_i$, we have  for each $\lambda\in\Lambda$,
\begin{equation}\label{eq:2}
X_\lambda(xy)=\frac{\delta}{|C_i|} \sum_{h\in N_i}
\lambda^h(x)(\varphi_i)^h(y),
\end{equation}
for some $\delta\in\{\pm 1\}$ depending only on $i$; and for each $1\leq j\leq e$,
\[
X_j(xy)=\frac{\pm1}{e|C_i|} \sum_{h\in N_i} (\varphi_i)^h(y).
\]

We remark that, if $\chi\in\Irr(G)$ and $g\in G$, then $\chi(g)$ is a
sum of $|g|$-th roots of unity. The nonexceptional characters
$X_j$ are therefore always $p$-rational.

\subsection{Proof of Theorem~\ref{mainthm:cyclic-defect-case}}
We are ready to prove Theorem~\ref{mainthm:cyclic-defect-case}, which we now restate.

\begin{theorem}\label{thm:cyclic-defect-case}
Let $p$ be a prime and $G$ a finite group. Let $B\in \Bl(G)$ be a
$p$-block of $G$ with cyclic defect group $D$. Then
$\lev(\chi)=\lev(\chi_{\bN_G(D)})$ for every $\chi\in\Irr(B)$.
\end{theorem}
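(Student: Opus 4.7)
The approach is to separate $\Irr(B)$ by character type and combine Dade's formula~\eqref{eq:2} with a Galois-action computation. By Corollary~\ref{thm:defect-zero-char} and Lemma~\ref{thm:block-defect-one-odd-p}, we may assume $|D|=p^a$ with $a\geq 2$, so that $|\Lambda|>1$ and~\eqref{eq:2} applies. The inequality $\lev(\chi_{\bN_G(D)})\leq\lev(\chi)$ is automatic, so the task is to exhibit, for each $\chi\in\Irr(B)$, an element of $\bN_G(D)$ at which $\chi$ attains its full $p$-rationality level. Nonexceptional characters $X_j$ are $p$-rational, so both sides vanish; the work lies with exceptional $\chi=X_\lambda$.

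Fix such a $\chi$ and let $p^j$ denote the order of $\lambda$ in $\Irr(D)$. By \cite[Corollary~5.9]{nbook}, for $\chi(g)\neq 0$ we may assume $g_p\in D_i\setminus D_{i+1}$ for some $0\leq i\leq a-1$ (the case $g_p=1$ simply yields $\chi(g)\in\QQ_m$, where $m=|G|_{p'}$). Since $g_{p'}$ centralizes $\langle g_p\rangle=D_i$, we have $g_{p'}\in C_i$, and~\eqref{eq:2} applies. Each $\lambda^h|_{D_i}$ has order dividing $p^{\max(j-i,0)}$, while $\varphi_i^h(g_{p'})\in\QQ_m$, so $\chi(g)\in\QQ_{p^{\max(j-i,0)}m}$, and hence
\[
\lev(\chi(g))\leq \max(j-i,\,0),\qquad\lev(\chi)\leq j.
\]

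For the matching lower bound, apply $\sigma_k\in\mathcal{I}'$ to~\eqref{eq:2}: the $p'$-values $\varphi_i^h(g_{p'})$ are fixed, while $\lambda^h(g_p)^{\sigma_k}=(\lambda^{1+p^k})^h(g_p)$. As the sign $\delta$ depends only on $i$, one deduces $X_\lambda^{\sigma_k}=X_{\lambda^{1+p^k}}$, interpreting the right-hand side through the $E$-orbit representative in $\Lambda$. Thus $\sigma_k$ fixes $X_\lambda$ if and only if $\lambda^{1+p^k}$ lies in the $E$-orbit of $\lambda$. The inertial quotient $E/C$ is cyclic of order $e\mid p-1$ (trivial when $p=2$), so its action on the order-$p^j$ characters of $D$ is by a $p'$-subgroup of $(\ZZ/p^j\ZZ)^*$; on the other hand, $1+p^k$ has $p$-power order in that group. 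Hence $1+p^k$ lies in the image of $E/C$ iff $k\geq j$, giving $\lev(\chi)=j$ (with the degeneracy $\lev(\chi)=0$ when $p=2$ and $j=1$, since $\QQ_2=\QQ$).

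Combining the two bounds, any $g\in G$ with $\lev(\chi(g))=\lev(\chi)=j$ must have $i=0$; that is, $g_p$ generates $D$. Then $g_{p'}\in\bC_G(g_p)=\bC_G(D)\leq\bN_G(D)$, so $g\in\bN_G(D)$, and $\lev(\chi_{\bN_G(D)})\geq\lev(\chi)$, which completes the argument. I expect the principal obstacle to be the Galois step: carefully confirming the identity $X_\lambda^{\sigma_k}=X_{\lambda^{1+p^k}}$ (making proper use of the fact that $\delta$ depends only on $i$ and not on $\lambda$) and handling $p=2$, where $\sigma_k$ does not generate $\mathcal{I}'$. In the $p=2$ case, the triviality of $E/C$ together with the observation that an exceptional character with $\lambda$ of order $2$ is automatically $2$-rational make the argument go through by direct inspection.
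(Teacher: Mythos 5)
Your overall skeleton matches the paper's second half exactly (reduce to exceptional characters, identify $\lev(X_\lambda)$ with $\lev(\lambda)$ via a clash between the $p$-power order of $\sigma_k$ and the $p'$-order of the inertial quotient, then use the refined bound from \eqref{eq:2} to force a level-attaining element to have $p$-part generating $D$, hence to lie in $\bN_G(D)$). Where you genuinely diverge is in how the key claim $\lev(X_\lambda)=\lev(\lambda)$ is obtained: the paper first invokes Dade's correspondence \cite[Lemma~4.9]{Dade96} to reduce to the case $\widetilde{D}\trianglelefteq G$, writes $X_\lambda=(\chi_\lambda)^G$ with $\chi_\lambda\in\Irr(\widetilde b)$ for $\widetilde b\in\Bl(\bC_G(\widetilde D))$, and uses Navarro's computation $(\chi_\lambda)^\tau=\chi_{\lambda^\tau}$ for $\tau\in\mathcal{I}$, whereas you try to read off the Galois action directly from the value formula \eqref{eq:2}.

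That is precisely where the gap sits, and it is the step you yourself flag. Applying $\sigma_k$ to \eqref{eq:2} only tells you that $X_\lambda^{\sigma_k}$ and $X_\mu$ (with $\mu\in\Lambda$ the representative of the $E$-orbit of $\lambda^{1+p^k}$) agree on elements whose $p$-part is a nontrivial element of a conjugate of $D$, and trivially on elements whose $p$-part is not conjugate into $D$, where both vanish. To conclude the character identity $X_\lambda^{\sigma_k}=X_\mu$ you must also match values on $p$-regular classes; this does hold, but it needs an extra input you never supply: all exceptional characters of a cyclic block have the same decomposition numbers, hence agree on $p$-regular elements, and those common values lie in $\QQ_{|G|_{p'}}$ and so are $\sigma_k$-fixed (alternatively, invoke the fact, quoted in the paper from \cite[p.~1135]{Navarro04}, that $\mathcal{H}_B$ permutes the exceptional characters among themselves, and then separate them by their values covered by \eqref{eq:2}). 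You should also record that the right-hand side of \eqref{eq:2} is unchanged when $\lambda$ is replaced by $\lambda^z$, $z\in E$ (this uses $E\leq N_i$ and $\varphi_i^z=\varphi_i$, since $z$ fixes $b_i$ and $\varphi_i$ is its unique Brauer character), which you implicitly use when interpreting $\lambda^{1+p^k}$ through its orbit representative. Finally, a minor point: for $p$ odd and $j=1$ your $\sigma_k$-test cannot decide whether $\lev(\chi)$ is $0$ or $1$, so the blanket claim $\lev(\chi)=j$ is not justified there; this is harmless, since level $0$ makes the statement trivial and level $1$ with $j=1$ still forces $i=0$ in your refined bound, but it should be said. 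With these points filled in, your argument is a correct and self-contained alternative to the paper's use of \cite[Lemma~4.9]{Dade96}.
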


\begin{proof} The case of defect zero follows from Corollary~\ref{thm:defect-zero-char}. We may assume that $|D|>1$.

We shall follow the notation described above. If $|\Lambda|=1$ then,
as mentioned already, the block $B$ must have defect one, and we are
done by Corollary~\ref{thm:defect-zero-char} and
Lemma~\ref{thm:block-defect-one-odd-p}. So we assume from now on
that $|\Lambda|>1$. If $\chi\in\Irr_{nex}(B)$ is a  non-exceptional
character of $B$, then $\chi$ is $p$-rational, and thus there is
nothing to prove. We therefore assume furthermore that $\chi$ is one
of the exceptional characters $X_\lambda$ for some $ \lambda\in\Lambda$.

We claim that \[\text{if } \lev(X_\lambda)\geq 1, \text{ then }
\lev(X_\lambda)=\lev(\lambda).
\]

Let $\widetilde{D}$ be the unique subgroup of $D$ of order $p$ and
set $\widetilde{N}:=\bN_G(\widetilde{D})$. Let
$\widetilde{B}=(b_0)^{\widetilde{N}}=(B_0)^{\widetilde{N}}$, which
is a block of $\widetilde{N}$ that has the same defect group $D$ and
Brauer correspondent $B_0$ as $B$. Dade proved in
\cite[Lemma~4.9]{Dade96} that the exceptional characters of
$\widetilde{B}$ can be labeled by
$\Irr_{ex}(\widetilde{B})=\{\widetilde{X}_\lambda\mid
\lambda\in\Lambda\}$ so that the bijection $X_\lambda\mapsto
\widetilde{X}_\lambda$ from $\Irr_{ex}(B)$ to
$\Irr_{ex}(\widetilde{B})$ satisfies
$(\widetilde{X}_{\lambda_1}-\widetilde{X}_{\lambda_2})^G=\delta(X_{\lambda_1}-X_{\lambda_2})$
for some fixed $\delta\in\{\pm1\}$ and every
$\lambda_1,\lambda_2\in\Lambda$. As mentioned in the proof of
\cite[Theorem~3.4]{Navarro04}, Dade's bijection commutes with the
action of $\mathcal{H}_B$, and hence preserves the $p$-rationality
level. This allows us, for the purpose of proving the claim, to
assume that $\widetilde{D}\vartriangleleft G$.

Let $\widetilde{C}:=\bC_G(\widetilde{D})$ and
$\widetilde{b}:=(b_0)^{\widetilde{C}}$. By \cite[Section~4]{Dade66},
the exceptional characters of $B$ are induced from (nontrivial)
ordinary irreducible characters of $\widetilde{b}$. In fact,
$\Irr(\widetilde{b})$ consists of $|D|$ characters
$\{\chi_\lambda\mid \lambda\in\Irr(D)\}$ and
$(\chi_{\lambda_1})^G=(\chi_{\lambda_2})^G$ if and only if
$\lambda_1=\lambda_2^z$ for some $z\in E$, so that
\[\Irr_{ex}(B)=\{(\chi_\lambda)^G\mid \lambda\in\Lambda\} \text{ and }
X_\lambda=(\chi_\lambda)^G.\]

It was shown in \cite[p.\,1136]{Navarro04}, using the
character-valued formula of $\chi_\lambda$ in
\cite[Lemma~3.2]{Dade66}, that a Galois automorphism
$\tau\in\mathcal{H}_B$ moves the character $\chi_\lambda$ in
$\Irr(\widetilde{b})$ to the character in
$\Irr((\widetilde{b})^\tau)$ labeled by $\chi_{\lambda^\tau}$. Recall the groups 
$\mathcal{I}:=\mathrm{Gal}(\QQ_{|G|}/\QQ_{|G|_{p'}})\leq \mathcal{H}_B$ and the $p$-subgroup $\mathcal{I}'\leq \mathcal{I}$ from Section \ref{sec:Galauts}.
Note that every relevant block is point-wisely fixed by
$\mathcal{I}$. It follows that, for every $\tau\in\mathcal{I}$, $
(\chi_\lambda)^\tau=\chi_{\lambda^\tau}, $ which implies that
\[
\lev(\chi_\lambda)=\lev(\lambda).
\]

Further, recall that to show $\lev(X_\lambda)=\lev(\lambda)$, it suffices to show they are stable under the same elements of $\mathcal{I}'$, assuming that $\lev(X_\lambda)\geq 1$ if $p\neq 2$.

 For each $\tau\in
\mathcal{I}'$, we have
\[
X_\lambda^\tau=\left((\chi_\lambda)^G\right)^\tau=\left((\chi_\lambda)^\tau\right)^G=(\chi_{\lambda^\tau})^G.
\]
Therefore, $(\chi_\lambda)^G$ is $\tau$-invariant if and only if
$(\chi_\lambda)^G=(\chi_{\lambda^\tau})^G$, which is equivalent to
$\lambda^\tau=\lambda^z$ for some $z\in E$. We may assume
that $z$ has $p'$-order, because $E/C$ has $p'$-order and $C$ fixes
every irreducible character of $D$. Further, note that the actions of $z$ and of $\tau$ commute. 
Let $t:=|\tau|$ be the order of $\tau$, which is a
$p$-power. Then $\lambda=\lambda^{\tau^t}=\lambda^{z^t}$. Thus
$z^t\in C$ as $E/C$ acts Frobeniusly on $\Irr(D)$. But $|z|$ and
$t$ are coprime,  so $z\in C$ and $\lambda^\tau=\lambda$. We
indeed have shown that, for every $\lambda\in\Lambda$,
\[
\text{if } p=2, \text{ then } \lev(X_\lambda)=\lev(\lambda)
\]
and
\[
\text{if } p>2 \text{ and } \lev(X_\lambda)\geq1, \text{ then }
\lev(X_\lambda)=\lev(\lambda).
\]
The proof of the claim is completed.

Note that the desired equality $\lev(\chi_{\bN_G(D)})=\lev(\chi)$ is
obvious when $\chi$ is $p$-rational. By the above claim, it suffices
to prove the equality for those characters $X_\lambda$ with
$\lev(X_\lambda)=\lev(\lambda)\geq 1$. For convenience, let
$\ell:=\lev(\lambda)$. Since
$\lev(X_\lambda)=\max\{\lev(X_\lambda(g)): g\in G\}$, there exists
some $g\in G$ such that
\[
\lev(X_\lambda(g))=\lev(X_\lambda)=\lev(\lambda)=\ell.
\]
Our job now is to show that such an element $g$ can be chosen to be
inside $\bN_G(D)$. In fact, we will see that, up to conjugation,
this must be the case.

Note that $X_\lambda$ takes value 0 on every element whose $p$-part
is not conjugate to an element of $D$. For our purpose of analyzing
the value $X_\lambda(g)$, we may therefore assume that $g_p\in D$. We next claim that $g_p\in D_0-D_1$, so that $g_p$ generates $D$.

Assume, to the contrary, that $g_p\in D_1$; that is, $|g_p|\leq
p^{a-1}$. Then there exists $1\leq i\leq a-1$ such that
$|g_p|=p^{a-i}$ and $g_p\in D_i-D_{i+1}$. Now $(g_p)^h\in
D_i-D_{i+1}$ for every $h\in N_i$. We have
\[\lev(\lambda^h(g_p))=\lev(\lambda((g_p)^{h^{-1}}))=
\begin{cases} \ell-i &\text{ if } i\leq \ell,\\ 0 &\text{ if }
i>\ell
\end{cases}\] for every $h\in N_i$. In any case,
\[
\lev(\lambda^h(g_p))\leq \ell-1.
\]
By the character-valued formula (\ref{eq:2}),
\[X_\lambda(g)=\frac{\delta}{|C_i|} \sum_{h\in N_i}
\lambda^h(g_p)(\varphi_i)^h(g_{p'})\] for some $\delta\in\{\pm 1\}$.
Note that each value $(\varphi_i)^h(g_{p'})$ is $p$-rational. It
follows that $X_{\lambda}(g)$, being a sum of complex numbers of
level at most $\ell-1$, must have level at most $\ell-1$, which is a
contradiction.

We have shown that $g_p\in D_0-D_1$. In other words, $g_p$ is a generator for $D$. Thus \[g_{p'}\in
\bC_G(g_p)=\bC_G(D) \subseteq \bN_G(D),\] and it follows that
\[g=g_pg_{p'}\in \bN_G(D).\] Let $\chi:=X_\lambda$. We then have
\[\ell=\lev(\chi(g))\leq \lev(\chi_{\bN_G(D)})\leq\lev(\chi)=\ell,\]
implying that $\lev(\chi_{\bN_G(D)})=\lev(\chi)$. The proof is
complete.
\end{proof}

\begin{remark} Our proof of Theorem~\ref{mainthm:cyclic-defect-case} indeed
shows that, in view of Remark~\ref{remark-1}, if a group element
captures the $p$-rationality of the character, then that element
\emph{must} lie inside the defect normalizer, up to conjugation. At the moment, we do not know if this is true for arbitrary defect.\end{remark}

We conclude this section with the confirmation of
Conjecture~\ref{conj:main} for all characters in blocks of defect at
most 2.

\begin{proposition}
Let $p$ be a prime, $G$ a finite group, and $\chi$ be a height-zero
character in a block $B$ of $G$ with $\lev(\chi)\geq 2$ and
$d(B)\leq 2$. Suppose that $D$ is a defect group of $B$. Then
$\lev(\chi)=\lev(\chi_{\bN_G(D)})$.
\end{proposition}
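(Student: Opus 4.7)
The plan is to reduce this proposition immediately to the cyclic-defect case, which has already been handled by Theorem~\ref{thm:cyclic-defect-case}. The key input is the elementary inequality from Lemma~\ref{lem:1}, which says that $\lev(\chi)\leq \nu(\exp(D))\leq d(B)$. Since we are given $\lev(\chi)\geq 2$ and $d(B)\leq 2$, these inequalities must all be equalities, forcing $\lev(\chi)=\nu(\exp(D))=d(B)=2$.

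From here I would argue that $D$ must be cyclic of order $p^2$. Indeed, since $d(B)=2$, we have $|D|=p^2$, so $D$ is isomorphic to either $C_{p^2}$ or $C_p\times C_p$. In the elementary abelian case we would have $\exp(D)=p$, contradicting $\nu(\exp(D))=2$. Therefore $D\cong C_{p^2}$ is cyclic.

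With $D$ cyclic, I would then simply invoke Theorem~\ref{thm:cyclic-defect-case} to conclude that $\lev(\chi)=\lev(\chi_{\bN_G(D)})$, which is exactly the desired equality. There is no real obstacle here: the entire argument is a one-line deduction from Lemma~\ref{lem:1} combined with the already-proved cyclic-defect theorem. The only thing worth checking carefully is the step eliminating the Klein-type defect group $C_p\times C_p$, which is immediate from the exponent bound.
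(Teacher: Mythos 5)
Your proposal is correct and matches the paper's own argument: both use Lemma~\ref{lem:1} to rule out the noncyclic possibility $C_p\times C_p$ (where $\exp(D)=p$ would force $\lev(\chi)\leq 1$) and then invoke Theorem~\ref{thm:cyclic-defect-case} for the cyclic case. The only cosmetic difference is that you squeeze the chain $2\leq\lev(\chi)\leq\nu(\exp(D))\leq d(B)\leq 2$ to force $D\cong C_{p^2}$, whereas the paper phrases the same deduction as a two-case split.
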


\begin{proof}
If $|D|\leq p^2$ then either $D$ is cyclic or $\exp(D)\leq p$. The
former case is solved by Theorem~\ref{thm:cyclic-defect-case}. In
the latter case, $\lev(\chi)\leq 1$ by Lemma~\ref{lem:1} and the
result is trivial.
\end{proof}


\section{Imprimitive characters of prime degree}\label{sec:imprimitive}

This section proves Theorem~\ref{mainthm:prime-degree-case} in the
case where the character in question is imprimitive. Recall that a
character $\chi\in\Irr(G)$ is termed \emph{imprimitive} if there exists
a subgroup $H<G$ and $\psi\in\Irr(H)$ such that $\chi=\psi^G$.

We shall need a $p$-local invariant of characters that was
introduced recently in Isaacs-Navarro's solution
\cite{Isaacs-Navarro22} of Conjecture~\ref{conj:Navarro-Tiep1}
for $p$-solvable groups.

\begin{definition} For a character $\Psi$ (not necessarily irreducible) of a
finite group $G$ and a nonnegative integer $i$, let
\[
\Delta_i(\Psi):=\sum_{\substack{\chi\in\Irr(G)\\ \lev(\chi)=i}}
[\chi,\Psi] \chi
\]
and, if one of $\Delta_i(\Psi)(1)$ is not divisible by $p$,
\[
\ell(\Psi):=\max\{i\in\ZZ_{\geq 0}: \Delta_i(\Psi)(1) \notequiv 0
\bmod p\}.
\]
\end{definition}

\begin{lemma}\label{lem:2} Let $\Psi$ be a character of a finite group $G$ with $\lev(\Psi)=a$. We
have:
\begin{enumerate}[\rm(i)]

\item $\Delta_i(\Psi)(1)\equiv 0 \bmod p$ for every $i\geq \max\{2,a+1\}$.

\item If $a\geq 1$, then $\lev(\Psi)\geq \ell(\Psi)$.
\end{enumerate}
\end{lemma}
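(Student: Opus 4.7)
The plan is to exploit the Galois automorphism $\sigma_{i-1}\in\mathcal{I}'$ introduced in Section~\ref{sec:Galauts}, which is the sharpest tool for detecting $p$-rationality level: in the ranges relevant here, its fixed characters in $\Irr(G)$ are precisely those $\chi$ with $\lev(\chi)\leq i-1$ (see \cite[Lemma~4.1]{Navarro-Tiep21}). For part~(i), I would fix $i\geq\max\{2,a+1\}$ so that $\sigma_{i-1}$ is defined (since $i-1\geq 1$), $\Psi$ is $\sigma_{i-1}$-invariant (since $i-1\geq a=\lev(\Psi)$), and every $\chi\in\Irr(G)$ of level exactly $i$ fails to be $\sigma_{i-1}$-fixed (its level strictly exceeds $i-1$).

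Since Galois automorphisms preserve the $p$-rationality level---they permute cyclotomic subfields of $\QQ_{|G|}$ setwise---$\sigma_{i-1}$ acts on the finite set $\{\chi\in\Irr(G):\lev(\chi)=i\}$. Because $\mathcal{I}'$ is a $p$-group, every $\langle\sigma_{i-1}\rangle$-orbit on $\Irr(G)$ has $p$-power size, and no level-$i$ character is fixed, so every such orbit has size divisible by $p$. Galois-conjugate characters share a common degree, and the identity $\Psi^{\sigma_{i-1}}=\Psi$ forces $[\chi^{\sigma_{i-1}},\Psi]=[\chi,\Psi]$, making multiplicities constant along orbits. Grouping the sum
\[
\Delta_i(\Psi)(1)=\sum_{\substack{\chi\in\Irr(G)\\ \lev(\chi)=i}}[\chi,\Psi]\,\chi(1)
\]
by $\langle\sigma_{i-1}\rangle$-orbits then displays it as a $\ZZ$-linear combination of terms of the form (orbit size)$\cdot$(common multiplicity)$\cdot$(common degree), each divisible by $p$; this proves (i).

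Part (ii) is an immediate corollary. With $a\geq 1$ we have $\max\{2,a+1\}=a+1$, so (i) yields $\Delta_i(\Psi)(1)\equiv 0\pmod{p}$ for every $i\geq a+1$, and therefore $\ell(\Psi)\leq a=\lev(\Psi)$.

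The main obstacle is not the orbit-counting but the precise level--stability dictionary used above. For odd $p$ it reduces to a short fixed-field computation, $(\QQ_{p^bm})^{\sigma_{i-1}}=\QQ_{p^{i-1}m}$, which one gets by comparing degrees. For $p=2$ the Galois group $(\ZZ/2^b)^\times$ is non-cyclic for $b\geq 3$, and the borderline case $i=2$ deserves special care: here $(\QQ_{2^bm})^{\sigma_1}=\QQ_m(\sqrt{-2})$ strictly contains $\QQ_m$, yet the conclusion survives because no level-$2$ character can have all of its values inside this subfield. Packaging these verifications into \cite[Lemma~4.1]{Navarro-Tiep21} is what keeps the above argument clean.
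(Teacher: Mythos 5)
Your proposal is correct and takes essentially the same route as the paper: fix $\Psi$ under a $p$-group of Galois automorphisms having no fixed points among the level-$i$ irreducible constituents, note that orbits have size divisible by $p$ while multiplicities and degrees are constant along them, conclude $\Delta_i(\Psi)(1)\equiv 0 \bmod p$, and deduce (ii) from (i). The only cosmetic difference is that you act with the single automorphism $\sigma_{i-1}$, which forces the explicit fixed-field verification (including the $p=2$, $i=2$ case via $\QQ_m(\sqrt{-2})$, which you handle correctly), whereas the paper acts with the full group $\mathrm{Gal}(\QQ_{|G|}/\QQ_{p^a|G|_{p'}})$ (or $\mathrm{Gal}(\QQ_{|G|}/\QQ_{p|G|_{p'}})$ when $a=0$), whose fixed field makes the non-invariance of level-$i$ constituents immediate.
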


\begin{proof}
Note that Part (ii) follows from (i), so it is
sufficient to prove (i).

Clearly $|G|_p\geq p^a$. Note that if $p=2$ then $a=0$ or is at
least $2$. Let $\mathcal{J}$ denote the (cyclic) $p$-group
$\mathrm{Gal}(\QQ_{|G|}/\QQ_{p^a|G|_{p'}})$ if $a>0$ or simply the $p$-group
$\mathrm{Gal}(\QQ_{|G|}/\QQ_{p|G|_{p'}})$ if $a=0$. Then $\Psi$ is
$\mathcal{J}$-invariant. Since $\QQ(\psi^\tau)=\QQ(\psi)$ for every
character $\psi$ of $G$ and every $\tau\in\mathcal{J}$, each
$\Delta_i(\Psi)$ is $\mathcal{J}$-invariant, and hence $\mathcal{J}$
permutes the irreducible constituents of $\Delta_i(\Psi)$. Let $i>a$
if $a>0$ or $i\geq 2$ if $a=0$. Then each constituent of
$\Delta_i(\Psi)$, of level $i$, is not $\mathcal{J}$-invariant, and
thus belongs to a $\mathcal{J}$-orbit of nontrivial length, which is
necessarily a nontrivial $p$-power. As the irreducible constituents
of $\Delta_i(\Psi)$ is a disjoint union of these orbits, the
statement follows.
\end{proof}

The next result makes use of some ideas in the proof of
\cite[Theorem~3.5]{Isaacs-Navarro22}.

\begin{lemma}\label{lem:3}
Let  $P\leq K\leq G$ where $P\in \Syl_p(G)$, $\chi\in\Irr_{p'}(G)$,
and $\psi\in\Irr(K)$ such that $\chi=\psi^G$. Let $i\in\ZZ_{\geq
2}$. Then
\[
\Delta_i(\chi_P)(1)\notequiv 0 \bmod p \text { if and only if }
\Delta_i(\psi_P)(1)\notequiv 0 \bmod p.
\]
In particular, if $\max\{\ell(\chi_P),\ell(\psi_P)\}\geq 2$ then
$\ell(\chi_P)=\ell(\psi_P)$.
\end{lemma}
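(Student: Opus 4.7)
The plan is to apply Mackey's formula to $\chi_P=(\psi^G)_P$, separate the ``diagonal'' double cosets from the ``off-diagonal'' ones, and show that the off-diagonal contributions vanish modulo $p$ for $i\geq 2$ via a Burnside-basis argument.

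First I would write, by Mackey's decomposition,
\[
\chi_P \;=\; \sum_{g\in P\backslash G/K}\bigl(\psi^g|_{P\cap K^g}\bigr)^P,
\]
and partition the set of double-coset representatives into $\mathcal{A}=\{g:P\leq K^g\}$ and $\mathcal{B}=\{g:P\cap K^g<P\}$. For $g\in\mathcal{A}$, the conjugate $g^{-1}Pg$ is a Sylow $p$-subgroup of $K$, so by Sylow's theorem in $K$ one has $g=n_g k$ with $n_g\in\bN_G(P)$ and $k\in K$, and then $\psi^g|_P$ is an $\bN_G(P)$-conjugate of $\psi_P$. Since $\bN_G(P)$ permutes $\Irr(P)$ while preserving each irreducible's set of values (and hence its level), $\Delta_i(\psi^g|_P)(1)=\Delta_i(\psi_P)(1)$ for every $g\in\mathcal{A}$. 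Comparing degrees in the Mackey sum with $\chi_P(1)=[G:K]\psi(1)$, and using that $[P:P\cap K^g]$ is divisible by $p$ for $g\in\mathcal{B}$, yields $|\mathcal{A}|\equiv[G:K]\pmod p$, a unit modulo $p$.

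The heart of the argument is the $\mathcal{B}$-contribution: for every proper subgroup $Q<P$ and every character $\eta$ of $Q$, I need to show
\[
\Delta_i(\eta^P)(1)\;\equiv\;0 \pmod p \quad\text{for every } i\geq 2.
\]
Reducing modulo $p$ kills the non-linear constituents of $\eta^P$ (which have $p$-divisible degree), so by Frobenius reciprocity the congruence reduces to showing $\sum_{\chi'\in\Lin(P),\,\lev(\chi')=i}[\eta,\chi'|_Q]_Q\equiv 0 \pmod p$. Writing this as $[\eta,\omega|_Q]_Q$ for the integer-valued class function $\omega=\sum\chi'$ (summed over those linear $\chi'$), which factors through $A=P/P'$, and applying the orthogonality $\sum_{\lambda\in\Irr(A/p^jA)}\lambda(a)=|A/p^jA|\mathbf{1}_{p^jA}(a)$, a direct computation produces
\[
[\eta,\omega|_Q]_Q \;=\; [P:QP_i]\cdot\tilde m_i \,-\, [P:QP_{i-1}]\cdot\tilde m_{i-1},
\]
where $P_j:=\pi^{-1}(p^jA)$ for $\pi:P\to A$, and $\tilde m_j:=[\eta|_{Q\cap P_j},1_{Q\cap P_j}]$. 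The essential observation, where the hypothesis $i\geq 2$ enters, is that $P_1=\Phi(P)$ because $\Phi(P)=P^pP'$ for every prime $p$; hence $P_j\leq\Phi(P)$ for every $j\geq 1$, and Burnside's basis theorem applied to the proper subgroup $Q<P$ forces $QP_j\leq Q\Phi(P)<P$. Both indices $[P:QP_{i-1}]$ and $[P:QP_i]$ are therefore nontrivial $p$-powers, yielding the vanishing. This step is the main obstacle, since the naive bound $\lev(\eta^P)\leq\lev(\psi)$ combined with Lemma~\ref{lem:2}(i) is insufficient whenever $\lev(\psi)$ is comparable to $\lev(\chi)$.

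Assembling the pieces gives
\[
\Delta_i(\chi_P)(1)\;\equiv\;|\mathcal{A}|\cdot\Delta_i(\psi_P)(1)\;\equiv\;[G:K]\cdot\Delta_i(\psi_P)(1)\pmod p,
\]
and since $[G:K]$ is a unit modulo $p$ the two sides vanish simultaneously. For the ``in particular'' consequence, if $\ell(\chi_P)\geq 2$ then applying the ``iff'' at $i=\ell(\chi_P)$ gives $\ell(\psi_P)\geq\ell(\chi_P)\geq 2$, and a symmetric application at $i=\ell(\psi_P)$ reverses the inequality, so $\ell(\chi_P)=\ell(\psi_P)$.
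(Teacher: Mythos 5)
Your proof is correct, and its skeleton — Mackey decomposition of $\chi_P=(\psi^G)_P$, splitting the double cosets according to whether $P$ lies in the conjugate of $K$, showing the remaining cosets contribute $0$ mod $p$ for $i\geq 2$, and identifying the diagonal contribution as a unit multiple of $\Delta_i(\psi_P)(1)$ — is exactly the paper's. The difference is that you make the argument self-contained where the paper cites Isaacs--Navarro: the vanishing of $\Delta_i(\eta^P)(1)$ mod $p$ for a character $\eta$ of a proper subgroup $Q<P$ and $i\geq 2$ is quoted in the paper as \cite[Lemma 3.1]{Isaacs-Navarro22}, whereas you reprove it by reducing mod $p$ to the linear constituents, using orthogonality in $P/P'$ to get $[P:QP_i]\tilde m_i-[P:QP_{i-1}]\tilde m_{i-1}$, and invoking $P_1=\Phi(P)$ together with Burnside's basis theorem (this is essentially the content of the cited lemma, and the identification of level-$i$ linear characters with those of order $p^i$ is valid precisely because $i\geq 2$, so the $p=2$ subtlety does not intervene); and the fact that the number of ``diagonal'' cosets is prime to $p$ is quoted in the paper as \cite[Lemma 3.4]{Isaacs-Navarro22}, whereas you obtain it directly by comparing degrees in the Mackey sum, using $p\nmid\chi(1)=[G:K]\psi(1)$ — a cleaner route that, unlike the cited lemma, exploits the $p'$-degree hypothesis already present here. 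Your handling of the diagonal cosets via Sylow's theorem in $K$ and $\bN_G(P)$-conjugacy of $\psi_P$ is a slightly more explicit version of the paper's level-preserving bijection of constituents, and your derivation of the ``in particular'' statement matches the intended (unwritten) argument.
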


\begin{proof} Let $X$ be a set of representatives for the double
$K-P$ cosets in $G$, so that
\[
G=\bigcup_{x\in X} KxP
\]
is a disjoint union. We decompose
\[
X=X_1\cup X_2,
\]
where $X_1$ consists of those $x\in X$ such that $P\subseteq K^x$
and $X_2$ is, of course, the complement of $X_1$ in $X$. Using
Mackey's theorem (see \cite[Problem 5.6]{Isaacs1}), we have
\begin{align*}
\chi_P=\sum_{x\in X} \left((\psi^x)_{K^x\cap P}\right)^P&=
\sum_{x\in X_1} \left((\psi^x)_{K^x\cap P}\right)^P + \sum_{x\in
X_2} \left((\psi^x)_{K^x\cap P}\right)^P\\
&= \sum_{x\in X_1} (\psi^x)_P + \sum_{x\in X_2}
\left((\psi^x)_{K^x\cap P}\right)^P.
\end{align*}
Therefore, for every $i\in\ZZ_{\geq 0}$,
\[
\Delta_i(\chi_P)=\sum_{x\in X_1} \Delta_i((\psi^x)_P) + \sum_{x\in
X_2} \Delta_i(\left((\psi^x)_{K^x\cap P}\right)^P)
\]

For each $x\in X_2$, note that $K^x\cap P$ is a proper subgroup of
$P$, and it follows from \cite[Lemma 3.1]{Isaacs-Navarro22} that
\[
\Delta_i\left(\left((\psi^x)_{K^x\cap P}\right)^P\right)(1) \equiv 0
\bmod p,
\]
for every $i\geq 2$. We now obtain
\[
\Delta_i(\chi_P)(1) \equiv \sum_{x\in X_1} \Delta_i((\psi^x)_P)(1)
\bmod p.
\]

Let $x\in X_1$. Note that $\alpha$ is an irreducible constituent of
$\psi_P$ if and only in $\alpha^x$ is an irreducible constituent of
$(\psi^x)_{P^x}$, and $\lev(\alpha)=\lev(\alpha^x)$. Thus, there is
a natural bijection between irreducible constituents of $\psi_P$ and
$(\psi^x)_P$ preserving the $p$-rationality level. In particular,
\[
\Delta_i(\psi_P)(1)=\Delta_i((\psi^x)_P)
\]
for every $x\in X_1$. The last congruence in the previous paragraph
then yields
\[
\Delta_i(\chi_P)(1) \equiv |X_1|\cdot\Delta_i(\psi_P)(1) \bmod p.
\]
Now, by \cite[Lemma 3.4]{Isaacs-Navarro22}, which states that
$|X_1|$ is not divisible by $p$, the lemma follows.
\end{proof}

We can now prove Theorem~\ref{mainthm:prime-degree-case} in the
imprimitivity case.

\begin{theorem}\label{imprimitive}
Let $p$ be a prime and $G$ a finite group. Let $\chi=\lambda^G\in
\Irr(G)$ for some linear character $\lambda$ of a subgroup $K$ of
$G$ of prime index not equal to $p$. Suppose $\lev(\chi)\geq 1$.
Then $\QQ_{p^{\lev(\chi)}}=\QQ_p(\chi_P)$.
\end{theorem}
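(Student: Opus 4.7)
The plan is to pin down the Galois stabilizer of $\chi_P$ precisely by extracting an $\mathcal{S}$-invariant linear constituent from $\chi_P$, using Lemma~\ref{lem:3} as the bridge from $\chi$ to $\lambda$. Throughout, set $\ell := \lev(\chi)$; since the prime $[G:K]$ differs from $p$, we have $P \leq K$. Because $\chi_P$ is a character of the $p$-group $P$ its values lie in a $p$-power cyclotomic field, and combined with $\lev(\chi_P) \leq \ell$ this gives the easy containment $\QQ_p(\chi_P) \subseteq \QQ_{p^\ell}$. The case $\ell = 1$ is immediate (and is vacuous for $p = 2$), so I focus on $\ell \geq 2$.

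First I would transfer the analysis to $\lambda$: the induction formula forces $\QQ(\chi) \subseteq \QQ(\lambda)$, so $\lev(\lambda) \geq \ell$; and because $K/\ker\lambda$ is cyclic with $P$ surjecting onto its Sylow $p$-subgroup, $\lambda_P$ is a linear character of order $|\lambda|_p$, giving $\lev(\lambda_P) = \lev(\lambda) \geq 2$. Irreducibility of $\lambda_P$ then yields $\ell(\lambda_P) = \lev(\lambda_P)$, so Lemma~\ref{lem:3} applies and gives $\ell(\chi_P) = \lev(\lambda)$; in particular
\[
\sum_{\substack{\mu \in \Irr(P) \\ \lev(\mu) = \lev(\lambda)}} [\chi_P,\mu]\,\mu(1) \notequiv 0 \bmod p.
\]

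The crux is an orbit-parity argument. Since $\mu(1)$ is a $p$-power for $\mu \in \Irr(P)$, only linear $\mu$ contribute nontrivially modulo $p$, leaving $\sum_{\mu \text{ linear},\,\lev(\mu) = \lev(\lambda)} [\chi_P, \mu] \notequiv 0 \bmod p$. Let $\mathcal{S}$ denote the stabilizer of $\chi_P$ in $\mathcal{I}'$ (which coincides with $\mathcal{I}$ when $p = 2$); as a $p$-group, $\mathcal{S}$ permutes these linear constituents preserving multiplicities, and $\mathcal{S}$-orbits of size at least $p$ drop out of the residue. Hence some $\mathcal{S}$-fixed linear $\mu_0 \in \Irr(P)$ of level $\lev(\lambda)$ must exist. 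Such $\mu_0$ has order exactly $p^{\lev(\lambda)}$, so $\QQ(\mu_0) = \QQ_{p^{\lev(\lambda)}}$, and $\mathcal{S}$ fixing $\mu_0$ forces $\mathcal{S}$ to fix this field pointwise. By Galois correspondence $\QQ_p(\chi_P) \supseteq \QQ_{p^{\lev(\lambda)}} \supseteq \QQ_{p^\ell}$, and combined with the earlier inclusion this gives $\QQ_p(\chi_P) = \QQ_{p^\ell}$ (incidentally forcing $\lev(\lambda) = \ell$).

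I expect the orbit-parity step to be the main delicate point: it crucially relies on the coprime-to-$p$ witness from Lemma~\ref{lem:3} being forced into a \emph{linear} constituent, so that $\QQ(\mu_0)$ realizes the full $\QQ_{p^{\lev(\lambda)}}$ rather than merely a subfield -- a forcing supplied precisely by the fact that $P$ is a $p$-group, so $\mu(1)$ is automatically a $p$-power for every $\mu \in \Irr(P)$.
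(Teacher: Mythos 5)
Your proposal is correct and follows essentially the same route as the paper: reduce to $\lambda$ via the Isaacs--Navarro invariant $\ell(\cdot)$ and Lemma~\ref{lem:3}, then use the fact that nonlinear constituents of $\chi_P$ have degree divisible by $p$ together with a $p$-group Galois orbit-counting argument to produce a fixed linear constituent of full level, forcing $\QQ_{p^{\lev(\chi)}}=\QQ_p(\chi_P)$. The only cosmetic difference is that the paper works directly with $\tau\in\Gal(\QQ_{p^a}/\QQ_p(\chi_P))$ after first establishing $\lev(\chi)=\lev(\chi_P)=\ell(\chi_P)$ (via Lemma~\ref{lem:2}), whereas you run the same orbit argument through the stabilizer in $\mathcal{I}'$ at level $\lev(\lambda)$ and recover $\lev(\lambda)=\lev(\chi)$ at the end.
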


\begin{proof}
Since the conclusion is obvious when $\lev(\chi)=1$, we assume that
$a:=\lev(\chi)\geq 2$. By the character-induction formula, we have
\[
\lev(\lambda)\geq \lev(\chi)=a.
\]
On the other hand, since $\lambda$ is linear, we have $\lev(\lambda_P)=\ell(\lambda_P)$ and
\[
\lev(\lambda)=\ell(\lambda_P)=\nu(\ord(\lambda)).
\]
Furthermore, using Lemmas~\ref{lem:2} and \ref{lem:3}, we obtain
\[
\lev(\chi_P)\geq \ell(\chi_P)
\]
and
\[
\ell(\chi_P)=\ell(\lambda_P).
\]
The displayed (in)equalities imply that
$a=\lev(\chi)=\lev(\chi_P)=\ell(\chi_P)$.

Note that $\QQ_p(\chi_P)\subseteq \QQ_{p^a}$ (see \cite[Lemma
7.1]{Navarro-Tiep21}). Let $\tau\in \Gal(\QQ_{p^a}/\QQ_p(\chi_P))$.
Since $[\QQ_{p^a}:\QQ_p]=p^{a-1}$, we have that $\tau$ has $p$-power
order. Also, $\tau$ fixes $\chi_P$, and hence $\tau$ permutes the
linear constituents of $\chi_P$ of level $a$. As $\ell(\chi_P)=a$,
it follows that $\Delta_a(\chi_P)(1)\notequiv 0 \bmod p$, which
implies that the number of the linear constituents of $\chi_P$ of
level $a$ is not divisible by $p$. Therefore one of them must be
$\tau$-invariant, and therefore $\tau$ fixes $\QQ_{p^a}$ or, in
other words, $\tau$ is trivial. We have shown that
$\QQ_{p^a}=\QQ_p(\chi_P)$, as desired.
\end{proof}


\section{Quasisimple groups}\label{sec:quasisimple}

\subsection{Theorem~\ref{mainthm:prime-degree-case} for quasisimple groups}
The main result of this section is the following, which proves
Theorem~\ref{mainthm:prime-degree-case} for quasisimple groups, when
combined with Lemma~\ref{lem:observation} below and
\cite[Theorem~A3]{Navarro-Tiep21}. This result will be used in Section~\ref{sec:primitive} to prove Theorem~\ref{mainthm:prime-degree-case} for primitive characters.

\begin{theorem}\label{thm:quasisimple}
Let $M$ be a quasisimple group and $\chi\in\Irr(M)$ be of prime
degree with $\lev(\chi)\geq 2$. Let $p$ be a prime not equal to
$\chi(1)$. Then
\[\lev(\chi)=\lev(\chi_P),\] where $P\in\Syl_p(M)$.
\end{theorem}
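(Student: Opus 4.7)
The plan is to invoke the classification of quasisimple groups $M$ admitting an irreducible character of prime degree (due to Malle, Moret\'o, Zalesski, and others), which restricts $(M,\chi(1))$ to a short list consisting of covers of alternating groups, covers of sporadic simple groups, $\PSL_2(q)$ (when $q\pm 1$ is prime), and a handful of further low-rank Lie-type entries. Since $\chi(1)$ is a prime different from $p$, the character $\chi$ has $p'$-degree, so the Sylow $p$-subgroup $P$ is a defect group of the block containing $\chi$. Because the inequality $\lev(\chi_P)\leq\lev(\chi)$ holds trivially, the task reduces to producing a $p$-element $g\in P$ with $\lev(\chi(g))=\lev(\chi)$.

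I would then proceed family by family. For covers of sporadic simple groups (together with the finite list of exceptional entries arising in the classification), I would verify the statement directly using the character-table library in GAP, inspecting values on conjugacy classes of $p$-power order. For covers of alternating groups, the prime-degree irreducible characters correspond to a very restricted family of partitions (with spin covers handled analogously), and the Murnaghan--Nakayama rule lets one evaluate $\chi$ on products of $p$-cycles and identify an element of $P$ realising the full level. For the Lie-type families---principally $\PSL_2(q)$ and its covers---I would appeal to the explicit Deligne--Lusztig character formulas: the Steinberg character is $\QQ$-rational and therefore excluded by the hypothesis $\lev(\chi)\geq 2$, while the surviving prime-degree characters are principal-series or cuspidal, and their values on a regular semisimple element of an appropriate maximal torus $T$ factor through a linear character of $T$. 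Any element of order $|T|_p$ contained in a Sylow $p$-subgroup of $T$ then yields a value of level $\lev(\chi)$, and after conjugation one may assume it lies in the fixed $P$.

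The main obstacle will be the Lie-type analysis when $p$ equals the defining characteristic of $M$, since then $P$ is a Sylow unipotent subgroup and the sought witness $g$ may need to be unipotent rather than semisimple, so torus-character formulas no longer apply directly. Here one leans hard on the classification: the prime-degree characters surviving in defining characteristic are so few and so explicit that one can either verify $\lev(\chi)\leq 1$ (so the hypothesis $\lev(\chi)\geq 2$ removes the case) or else exhibit a specific unipotent element of order $p^{\lev(\chi)}$ on which $\chi$ attains its full level, for instance by using generic degree formulas or Springer-theoretic values. A secondary but more uniform issue is bookkeeping: once a witness $g$ has been found inside a concretely described torus or unipotent subgroup, one must ensure it can be conjugated into the prescribed $P$, which is automatic by the conjugacy of Sylow $p$-subgroups.
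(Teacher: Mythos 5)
Your overall strategy (classify the quasisimple groups with a prime-degree character, then find a $p$-element witness case by case, with GAP for sporadic/exceptional cases and the observation that defining characteristic forces $\lev(\chi)\leq 1$) is the same as the paper's, but your description of the classification is wrong in a way that hides the real work. The list of cross-characteristic Lie-type cases (see \cite[Theorem~4.2]{HTZ}, which the paper uses) is not ``$\PSL_2(q)$ plus a handful of low-rank entries'': it contains the unbounded-rank families $\PSL_n(q)$ and $\PSU_n(q)$ with $n$ an odd prime, whose prime-degree characters are the Weil characters of degree $(q^n-1)/(q-1)$ and $(q^n+1)/(q+1)$, as well as $\PSp_{2n}(q)$ with degree $(q^n\pm1)/2$. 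Your proposed uniform argument --- values on a regular semisimple element of a torus ``factor through a linear character of $T$'' --- does not handle these. Even where Deligne--Lusztig theory applies cleanly, the value of $\pm R_T^\theta$ at a regular semisimple $s\in T$ is a Weyl-orbit sum such as $\zeta^{im}+\zeta^{-im}$, not a single value $\theta(s)$, and the assertion that such a sum still has level $\lev(\theta)$ is exactly the nontrivial point; the paper proves it via $\sigma_e$-stability (\cite[Lemma~4.1]{Navarro-Tiep21}, \cite[Lemma~2.1]{penapryor}), an argument that requires $p$ odd. For the Weil characters your sketch gives no method at all: the paper treats $\PSL_n(q)$ by showing the Weil character is Harish--Chandra induced from a linear character of a parabolic and then invoking the imprimitive-case theorem, and treats $\PSU_n(q)$ by the explicit Tiep--Zalesski value formula \cite[Theorem~4.1]{TZ97} evaluated at a carefully chosen (non-regular) semisimple $p$-element with eigenvalues $\{\beta,\beta^{-1},1,\dots,1\}$. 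Without these (or equivalent) ingredients, the claim ``any element of order $|T|_p$ in a Sylow $p$-subgroup of $T$ yields a value of level $\lev(\chi)$'' is unjustified.

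Two smaller points. You never say how $p=2$ is handled, and your torus-sum argument is precisely where $p=2$ is delicate; the paper simply quotes \cite[Theorem~A3]{Navarro-Tiep21}, which settles the $p=2$ case of Conjecture~\ref{conj:Navarro-Tiep1} outright, and you should do the same. For covers of alternating groups your Murnaghan--Nakayama plan is unnecessary: for odd $p$ one has $\QQ(\chi)\subseteq\QQ_{|G|_{p'}}(\sqrt p)$ by \cite[Theorem~6.1]{Navarro-Tiep21}, so $\lev(\chi)\leq 1$ and the hypothesis $\lev(\chi)\geq 2$ makes the case vacuous (and $p=2$ is again covered by Theorem~A3); this is not a gap, but it signals that the genuinely hard cases are the Lie-type Weil characters your proposal glosses over.
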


Our next result reduces us to the case that $G$ is a group of Lie type defined in characteristic distinct from $p$. 

\begin{theorem}\label{thm:nonLiecross}
Let $p$ be a prime and let $G$ be a quasisimple group such that $S=G/\bZ(G)$
is an alternating group, a sporadic simple group, a simple group of
Lie type with exceptional Schur multiplier, or a simple group of Lie
type defined in characteristic $p$. Let $\chi\in\irr{G}$ have height zero,
lie in a block with nontrivial defect group $D$, and be such that
$\lev(\chi)\geq 2$. Then $\lev(\chi)=\lev(\chi_D)$. In particular,
Conjecture~\ref{conj:main} and Theorem \ref{thm:quasisimple} hold in
these cases.
\end{theorem}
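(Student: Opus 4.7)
The plan is to treat the four families in the hypothesis separately, beginning with the two finite ones. The sporadic groups and the quasisimple groups whose simple quotient has an exceptional Schur multiplier together form an explicit finite list, and for these one verifies the conclusion directly from the GAP character table library \textsf{CTblLib}: for each such $G$, each prime $p$, each block $B\in \Bl(G)$ of nontrivial defect, and each height-zero $\chi\in\Irr(B)$ with $\lev(\chi)\geq 2$, the defect group $D$ (of known order $p^{d(B)}$) can be located as a Sylow $p$-subgroup of the centralizer of a representative of a suitable class of $B$, the values $\chi(g)$ for $g$ in classes meeting $D$ are tabulated, and the $p$-part of their conductor is compared to $p^{\lev(\chi)}$. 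By Lemma~\ref{lem:1}, only blocks with $\nu(\exp(D))\geq 2$ can contribute, pruning the list further.

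For the alternating groups and their covers, I would use the combinatorial parametrization of $\Irr(\Sy_n)$ by partitions of $n$ together with the Nakayama conjecture, which identifies the $p$-block of $\chi_\lambda$ via the $p$-core of $\lambda$; the defect group $D$ of a block of weight $w$ is isomorphic to a Sylow $p$-subgroup of $\Sy_{pw}$. The key step is to exhibit an element $g=\pi\sigma\in D$, with $\pi$ a product of $w$ disjoint $p$-cycles and $\sigma$ a $p'$-element of $\Centralizer_{\Sy_n}(\pi)$, whose value $\chi(g)$ realizes $\lev(\chi)$. The Murnaghan--Nakayama rule applied on such ``wreath-type'' elements reduces $\chi_\lambda(g)$ to a product of character values indexed by the $p$-quotient of $\lambda$, and one then chooses $\sigma$ so that the resulting sum of roots of unity has conductor divisible by $p^{\lev(\chi)}$. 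The descent from $\Sy_n$ to $\Al_n$, and to the spin covers $2.\Sy_n$ and $2.\Al_n$ when $p$ is odd, is handled by Clifford theory and the Morris--Stembridge recursion, respectively; the remaining small-$n$ exceptional covers $3.\Al_6,\,3.\Al_7,\,6.\Al_6,\,6.\Al_7$ fall under the preceding paragraph.

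For simple groups of Lie type in the defining characteristic $p$, the Schur multiplier of the simple quotient is coprime to $p$ (exceptional multipliers having been removed), so the center of $G$ is a $p'$-group and the block picture is especially rigid: by a classical result of Humphreys, the Steinberg character is the unique $p$-defect-zero irreducible, and every other irreducible lies in a block whose defect group is a full Sylow $p$-subgroup, namely a maximal unipotent subgroup $U$. Consequently the claim reduces to showing $\lev(\chi)=\lev(\chi_U)$ for every $p'$-degree $\chi\in\Irr(G)$. My plan is to invoke Malle's description of the $p'$-degree characters via Lusztig's Jordan decomposition, reducing to characters obtained by Harish--Chandra induction $R_T^G(\theta)$ from a linear character $\theta$ of a maximal torus $T\subseteq B=TU$. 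Then Mackey/Frobenius reciprocity applied to $(R_T^G(\theta))_U$ forces the linear constituents of $\chi_U$ to realize $\QQ(\theta)\subseteq \QQ(\chi_U)$, and since $\lev(\chi)=\lev(\theta)$ for such induced characters, the desired equality follows.

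The principal obstacle I expect is the cuspidal sector in the defining-characteristic case: $p'$-degree cuspidal characters (for instance certain unipotent cuspidals in exceptional types) are not reached by Harish--Chandra induction from a proper Levi, and for them one must resort to Lusztig's character formula together with a case division by Lie type. A secondary, milder, difficulty is the spin case for $2.\Al_n$, where the Morris recursion can degenerate on partitions with few removable $p$-bars and one may need to combine several wreath-type elements, or argue inductively on the $p$-quotient, to produce a single $g\in D$ that realizes $\lev(\chi)$.
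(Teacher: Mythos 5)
Your proposal has genuine gaps, and it also misses the observation that makes this theorem essentially easy. The paper's proof shows that in almost all of these families the hypothesis $\lev(\chi)\geq 2$ is simply vacuous: for sporadic groups, small alternating groups, and the exceptional Schur multiplier cases one checks in GAP that $\lev(\chi)\leq 1$ for height-zero characters (apart from a short explicit list at $p=2$, settled directly); for covers of $\Al_n$ ($n\geq 8$) and for Lie type in defining characteristic $p$ odd, \cite[Theorem~6.1]{Navarro-Tiep21} gives $\QQ(\chi)\subseteq\QQ_{|G|_{p'}}(\sqrt{p})$, hence $\lev(\chi)\leq 1$; at $p=2$ the characters of (covers of) alternating groups are $2$-rational; and for defining characteristic $2$, Humphreys' result that positive-defect blocks have maximal defect reduces everything to \cite[Theorem~A3]{Navarro-Tiep21}. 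You did spot Humphreys, but instead of quoting the known $p=2$ result (or the conductor bound for odd $p$), you propose to reprove a Sylow-restriction statement from scratch, and that is where your argument breaks: it is false that the $p'$-degree characters in defining characteristic are irreducible Harish--Chandra inductions $R_T^G(\theta)$ with $\theta$ a linear character of a maximally split torus $T$ (most semisimple characters lie in other Harish--Chandra or Deligne--Lusztig series, attached to non-split tori). Worse, even for a genuine principal series character $\chi=\Ind_B^G(\tilde\theta)$, Mackey gives $\chi_U=\sum_x \Ind_{U\cap U^x}^U(1)$, because $\tilde\theta$ is trivial on all unipotent elements of $B^x$; so $\chi_U$ is rational-valued and the claimed inclusion $\QQ(\theta)\subseteq\QQ(\chi_U)$ is wrong. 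In any case $|T|$ is prime to $p$, so $\lev(\theta)=0$ and the identity $\lev(\chi)=\lev(\theta)$ could never account for a character of level at least $2$.

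The alternating-group section has a similar problem of substance versus plan: the entire content would be the assertion that some element $g=\pi\sigma\in D$ realizes $\lev(\chi)$, and you give no argument for it beyond "choose $\sigma$ so that the conductor works out", while explicitly acknowledging that the Murnaghan--Nakayama/Morris recursions may degenerate. That step is not established, and it is also unnecessary, since for odd $p$ the levels are at most $1$ by the Navarro--Tiep conductor bound and for $p=2$ these characters are $2$-rational. The GAP verification you propose for sporadic groups and exceptional covers is fine in spirit (it is what the paper does), though checking $\lev(\chi)\leq 1$ from character tables is much lighter than locating defect groups and restricting to them. As written, then, the proposal is not a proof: one of its two substantive reductions rests on a false claim, and the other leaves its key construction unproved.
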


\begin{proof}

When $S$ is either a sporadic simple group or $\Al_n$ with $5\leq
n\leq 7$, we have $\lev(\chi)\leq 1$ for all primes $p$ and all $\chi\in\irr{G}$ of height zero, which can be readily checked
in \cite{GAP}. If $S$ is a group of Lie type with exceptional Schur multiplier or
the Tits group $\tw{2}\type{F}_4(2)'$, we see using \cite{GAP} that
$\lev(\chi)\leq 1$ for all height-zero characters of $G$ except when
$p=2$ and $S=\PSL_3(4)$ with $4\mid|\zent{G}|$;
 $S=\type{B}_3(3)$ with $2\mid |\zent{G}|$;
$S=\PSU_4(3)$ with $4\mid|\zent{G}|$; or
 $S=\tw{2}\type{F}_4(2)'$.
In the latter cases, $\lev(\chi)\leq 2$, and we in fact see using \cite{GAP} that for every $\chi\in\Irr(G)$ and for every prime
$p\mid|G|$, we have $\lev(\chi)=\lev(\chi_D)$.

If $p=2$ and $S$ is $\Al_n$, then every irreducible
character of $S$ is $p$-rational (see \cite[\S3]{Hung-Tiep23} for
instance), and we are done. Now  consider the case $p=2$ and $S$ is a simple group of Lie type defined in characteristic $2$ with nonexceptional Schur multiplier. Note that by \cite{Hum}, the blocks with positive defect are in fact of maximal defect. That is, the nontrivial defect groups are Sylow $2$-subgroups in this case. Then we have $\lev(\chi)=\lev(\chi_D)$ by \cite[Theorem~A3]{Navarro-Tiep21}.

Finally, suppose that $p$ is odd and $G$ is a cover of an alternating
group $\Al_n$ with $n\geq 8$ or a quasisimple group of Lie type that
is a quotient of $\bG^F$ for some simple, simply connected algebraic
group $\bG$ over a field of characteristic $p$ and a Steinberg
endomorphism $F:\bG\rightarrow \bG$.  It was shown in
\cite[Theorem~6.1]{Navarro-Tiep21} that, in this situation, 
\[
\QQ(\chi)\subseteq \QQ_{|G|_{p'}}(\sqrt{p})
\]
for every $\chi\in\Irr(G)$. As $\QQ_{|G|_{p'}}$ contains a primitive $4$th root of unity and the conductor of 
$\sqrt{(-1)^{(p-1)/2}p}$ is $p$, it follows that $c(\chi)$ divides
$p|G|_{p'}$. Then $\lev(\chi)\leq 1$ and the conjecture trivially
holds in this case.
\end{proof}

We are now ready to complete the proof of Theorem \ref{thm:quasisimple}.

\begin{proof}[Proof of Theorem \ref{thm:quasisimple}]
By Theorem \ref{thm:nonLiecross}, we may assume that $S=M/\zent{M}$ is a simple group of Lie type defined in characteristic $q_0\neq p$, and that $S$ has non-exceptional Schur multiplier. Further, we assume that $p$ is odd, since   the statement follows from \cite[Theorem~A3]{Navarro-Tiep21} if $p=2$.

Now, \cite[Theorem~4.2]{HTZ} gives a list of the possible $(S, r)$ in
this case, where $r=\chi(1)$ is the prime for which $M$ has an
irreducible character of degree $r$. By our assumptions, we are not
in the cases listed in (i) or (vi) of \cite[Theorem~4.2]{HTZ}. Then  $S$ is one of: \[\PSL_2(q), \PSU_n(q) (n\geq 3), \PSL_n(q)
(n\geq 3), \text{ or } \PSp_{2n}(q),\] with specifications on $n, q,
r$, and $\chi$ in each case. Here, we have  $S=G/\zent{G}$ and
$M=G/Z$ for some $Z\leq \zent{G}$ and $G:=\bG^F$, where $\bG$ is a
simple, simply connected algebraic group and $F\colon
\bG\rightarrow\bG$ is a Frobenius endomorphism defining $\bG$ over
$\FF_q$, where $q$ is some power of $q_0$. We discuss each case separately.

\medskip

(I) First, suppose we are in case (ii) of \cite[Theorem~4.2]{HTZ}, so $S=\PSL_2(q)$. Then either $r=q=q_0$ and $\chi$ is the Steinberg character, which is rational; or $q$ is odd and $\chi(1)=r=\frac{q-\epsilon}{2}$ for some $\epsilon\in\{\pm1\}$; or $q$ is a power of $2$ and $r=q+\epsilon$ for some $\epsilon\in\{\pm1\}$ is a Mersenne prime or Fermat prime. In the case $q$ is odd, we have $c(\chi)\in\{q_0, 1\}$ as in \cite[\S 5.2.2]{HTZ}, so  $\lev(\chi)=0$.

So, assume $q$ is a power of $2$. Here we have  $\QQ(\chi)\subseteq \QQ_{q-\epsilon}$, so we may assume $p\mid (q-\epsilon)$. In this case, $\chi$ is the restriction of a semisimple character $\wt\chi$ of $\wt{G}=\GL_2(q)$ indexed by a semisimple element with eigenvalues $\{\zeta^i, \zeta^{-i}\}$, where $\zeta$ is  a primitive $(q-\epsilon)$ root of unity and $1\leq i<q-\epsilon$. Letting $\alpha$ be a generator for $\Irr(C_{q-\epsilon})$, we have $\QQ(\chi)\subseteq \QQ(\alpha^i)$, so that $\lev(\chi)\leq \lev(\alpha^i)$. Note that by \cite[Lemmma~4.1]{Navarro-Tiep21}, $\lev(\chi)$  is the smallest positive integer $e$ such that $\chi^{\sigma_e}=\chi$, since $p$ is odd. (Recall from Section \ref{sec:Galauts} that $\sigma_e\in\mathcal{I}'$ is the element mapping any $p$-power root of unity
$\omega$ to $\omega^{1+p^e}$.)

Now, the value of $\chi$ on a semisimple element $g_j$ of $S=G=\SL_2(q)$ with
eigenvalues $\{\zeta^j, \zeta^{-j}\}$ is $\zeta^{ij}+\zeta^{-ij}$.
In particular, taking $m:=(q-\epsilon)_{p'}$ and $h:=g_m$, we have
\[\chi(h)=\zeta^{im}+\zeta^{-im},\] which is stable under $\sigma_e$ if
and only if $\zeta^{im}$ is, since $p$ is odd. (This is worked out,
for example, as in \cite[Lemma~2.1]{penapryor}.) But this happens if
and only if $\alpha^{im}$, and hence $\alpha^i$, is stable under
$\sigma_e$. It follows that
\[\lev(\alpha^i)=\lev(\chi(h))\leq\lev(\chi).\] This establishes that
\[\lev(\alpha^i)= \lev(\chi)=\lev(\chi|_P),\] where $P$ is a Sylow
$p$-subgroup of $S$ containing $h$.

\medskip

(II) Next, suppose we are in case (iii) of \cite[Theorem~4.2]{HTZ}, so that $S=\PSL_n(q)$ with $n$ an odd prime, $q=q_0^f\geq 3$ with $q_0$ a prime and $f$ odd, and $r=(q^n-1)/(q-1)$ with $(n,q-1)=1$. Note that this means $S=M=G$. Here, as in \cite[\S5.2.1]{HTZ}, we have $\chi$ is one of the $q-2$ irreducible Weil characters of degree $r=(q^n-1)/(q-1)$. Note that $\chi$ extends to an irreducible Weil character $\wt\chi$ of $\wt{G}:=\GL_n(q)$. Further,  $\chi$ is determined by the irreducible constituent of $\wt\chi|_{\zent{\wt{G}}}$.

Let $\alpha$ be a generator for $\Irr(\zent{\wt{G}})\cong C_{q-1}$.
Then for $i=1,\ldots, q-2$,  let $\wt\chi_i$ be an irreducible Weil
character of $\wt{G}$ such that
$\wt\chi_i|_{\zent{\wt{G}}}=\wt\chi_i(1)\alpha^i$. Then any such
choice of $\wt\chi_i$ has the same restriction to $G$, and we let
$\chi_i:=\wt{\chi}_i|_{G}$ be that restriction. Let
$\wt\chi:=\wt\chi_i$ and $\chi:=\chi_i$. As discussed in
\cite[\S5.2.1]{HTZ}, we have $\QQ(\wt\chi_i)\subseteq
\QQ(\alpha^i)$, so that \[\QQ(\chi)\subseteq \QQ(\wt\chi)\subseteq
\QQ(\alpha^i)= \QQ(\zeta^i)\subseteq \QQ(\zeta),\] where $\zeta$ is
a primitive $(q-1)$-root of unity in $\overline{\QQ}^\times$. From
this, we may assume $p\mid (q-1)$, as otherwise $\lev(\chi)=0$.

Now, following \cite[p.~125]{GT99}, we see $\wt\chi=\hc{L}{\wt{G}}(\lambda)$, where $L$ is a Levi subgroup of the form $\GL_1(q)\times \GL_{n-1}(q)$ of $\wt{G}$, $\lambda\in\irr{L}$ is the character of a module of the form \[S(s,(1))\otimes S(t, (n-1))\] in the notation of loc. cit. with $s\neq t\in\FF_q^\times$ and $s/t=\alpha^i$, and $\hc{L}{\wt{G}}$ denotes Harish-Chandra induction. Here by an abuse of notation, we also denote by $\alpha$ a generator of $\FF_q^\times\cong C_{q-1}$. Since multiplying by a linear character of $\wt{G}$ does not affect $\chi_i$, we may further assume that $t=1$, so \[\lambda=S(\alpha^i,(1))\otimes S(1, (n-1)).\] (Note that for $\alpha^j\in\FF_q^\times$,  the module $S(\alpha^j, (k))$ affords the inflation of the linear character $\GL_k(q)/\SL_k(q)\cong \FF_q^\times\rightarrow\overline{\QQ}^\times$ defined by $\alpha\mapsto \zeta^j$.)

Now, let $Q$ be a parabolic subgroup of $\wt{G}$ such that $L\leq Q$ is the Levi complement in $Q$, and let $\hat\lambda$ be the inflation of $\lambda$ to $Q$. Then \[\chi=\Res^{\wt{G}}_G\Ind_Q^{\wt{G}}(\hat\lambda)=
\Ind_{Q\cap G}^G\Res_{Q\cap G}^Q(\hat\lambda)\] since $\wt{G}=GQ$. Note that $r=\chi(1)=[G:Q\cap G]$. Then by the first half of the proof of Theorem \ref{imprimitive}, we have $\lev(\chi)=\lev(\chi_P)$, as desired.

\medskip

(III) Now consider case (iv) of \cite[Theorem~4.2]{HTZ}. Here $S=\PSU_n(q)$ with $n$ an odd prime,  $r=(q^n+1)/(q+1)$, and   $(n,q+1)=1$. Again, this means $S=M=G$ and  $\chi$ is one of the $q$ irreducible Weil characters of degree $r=(q^n+1)/(q+1)$ (see  \cite[\S5.3]{HTZ}).
Again,  $\chi$ extends to an irreducible Weil character $\wt\chi$ of $\wt{G}:=\GU_n(q)$ and  $\chi$ is determined by its values on $\zent{\wt{G}}$.  Letting $\alpha$ be a generator for $\Irr(\zent{\wt{G}})\cong C_{q+1}$, we have Weil characters $\chi_i=\wt\chi_i|_{G}$, where $\wt\chi_i|_{\zent{\wt{G}}}=\wt\chi_i(1)\alpha^i$ as in (II), now for $i=1,\ldots, q$.
 Here we similarly have $\QQ(\chi_i)\subseteq \QQ(\alpha^i)\subseteq \QQ(\xi)$, where $\xi$ is a primitive $(q+1)$-root of unity in $\overline{\QQ}^\times$. So, we  assume $p\mid (q+1)$. Further, this establishes that \[\lev(\chi_i)\leq \lev(\alpha^i).\] 

In this case, we use the explicit formula from \cite[Theorem~4.1]{TZ97} for the values of $\chi_i$. Namely, for $g\in \wt{G}$, we have
\[\chi_i(g)=\frac{(-1)^n}{q+1}\sum_{k=0}^q\xi^{-ik}(-q)^{\dim\ker{g-\hat\xi^{-k}}},\] where $\hat\xi$ denotes a generator of the subgroup $C_{q+1}\leq \FF_{q^2}^\times$.

Now, let $m:=(q+1)_{p'}$ and let $\beta:=\hat\xi^{m}$ be a generator of the Sylow $p$-subgroup of $C_{q+1}\leq\FF_{q^2}^\times$. Let $h$ be a semisimple $p$-element of $G$ with eigenvalues $\{\beta, \beta^{-1}, 1,\ldots,1\}$. Then  since $n$ and $p$ are odd, we have:

\[\chi_i(h)=\frac{-1}{q+1}\left((\xi^{im}+\xi^{-im})(-q)+(-q)^{n-2}- \xi^{im}-\xi^{-im}+\sum_{k=1}^q \xi^{-ik} \right)\]
\[=(\xi^{im}+\xi^{-im})+\frac{1}{q+1}\left(q^{n-2}-\sum_{k=1}^q \xi^{-ik} \right)=(\xi^{im}+\xi^{-im})+\frac{q^{n-2}+1}{q+1}.\]
 Recall again that by \cite[Lemma~4.1]{Navarro-Tiep21}, $\lev(\chi)$  is the smallest positive integer $e$ such that $\chi^{\sigma_e}=\chi$, since $p$ is odd.
From above, we see $\chi_i(h)$ is fixed by $\sigma_e$ if and only if $(\xi^{im}+\xi^{-im})$ is fixed by $\sigma_e$. From here, we conclude similar to  (I). Namely,  $\chi_i(h)$ is then stable under $\sigma_e$ if and only if $\xi^{im}$ is stable under $\sigma_e$, if and only if the character $\alpha^{im}$ is stable under $\sigma_e$, so also $\alpha^i$ is. So we have $\lev(\chi_i(h))=\lev(\alpha^i)$. Then \[\lev(\chi_i)\geq \lev(\alpha^i),\] forcing that
\[\lev(\alpha^i)= \lev(\chi_i)=\lev(\chi_i|_P),\] where $P$ is a Sylow $p$-subgroup of $S$ containing $h$.

\medskip

(IV) Finally, assume we are in case (v) of \cite[Theorem~4.2]{HTZ}. Then $S=\PSp_{2n}(q)$ and either \begin{itemize}
\item[(a)] $r=(q^n+1)/2$ with $n=2^a\geq 2$ and $q=q_0^{2^k}$ with $q_0$ odd and $k\geq 0$; or
\item[(b)] $r=(3^n-1)/2$, where $n$ is an odd prime and $q=3$.

\end{itemize}
In case (a), we have as in \cite[\S5.4.1]{HTZ} that either $q$ is a square and hence $\chi$ is rational-valued, or $k=0$ so $q=q_0$ and $\QQ(\chi)\subseteq \QQ(\zeta_{q})$, where $\zeta_{q}$ is a primitive $q$th root of unity in $\overline{\QQ}^\times$. Then $\chi$ is almost $p$-rational (hence has $\lev(\chi)\leq 1$) if $p=q_0$ and is $p$-rational (so $\lev(\chi)=0$) if $p\neq q_0$.
In case (b), we similarly have $\QQ(\chi)\subseteq \QQ(\zeta_3)$ with $\zeta_{3}$  a primitive $3$rd root of unity. Again, $\lev(\chi)\leq 1$ if $p=3$ and $\lev(\chi)=0$ if $p\neq 3$.
\end{proof}


\subsection{Further examples satisfying Conjecture \ref{conj:main}}\label{sec:further-examples}

We provide further evidence
for Conjecture~\ref{conj:main} among certain (almost)-quasisimple groups.

When $G=\bG^F$, where $\bG$ is a connected reductive algebraic group over $\overline{\mathbb{F}}_{q_0}$ for a prime $q_0$ and $F\colon \bG\rightarrow\bG$ is a Steinberg morphism, the set $\Irr(G)$ is partitioned into so-called rational Lusztig series $\mathcal{E}(G, s)$. Here,  $s$ ranges over semisimple elements, up to $G^\ast$-conjugacy, of $G^\ast=(\bG^\ast)^{F^\ast}$, where $(\bG^\ast, F^\ast)$ is dual to $(\bG, F)$.  Let $p\neq q_0$ be a nondefining prime and suppose that $s\in G^\ast$ is a semisimple element of order relatively prime to $p$.
Then we define $\mathcal{E}_p(G,s)$ to be the union of all $\mathcal{E}(G, st)$
where $t\in \cent{G^\ast}{s}$ is a $p$-element.  A result of Digne and Michel yields that the set $\mathcal{E}_p(G, s)$ is a union of $p$-blocks. (See \cite[Theorem~9.12]{CE04}.)

Our next examples concerning Conjecture \ref{conj:main} are the Suzuki and Ree groups. That is, these are the cases that $F$ is not a Frobenius morphism.

\begin{theorem}\label{thm:suzree}
 Conjecture \ref{conj:main}  holds for $\tw{2}\type{B}_2(q^2)$ for $q^2=2^{2n+1}>2$ and $\tw{2}\type{G}_2(q^2)$ for $q^2=3^{2n+1}>3$. Further, if Conjecture \ref{conj:Navarro-Tiep1} holds for $\tw{2}\type{F}_4(q^2)$, where $q^2=2^{2n+1}>2$, then Conjecture \ref{conj:main} holds for $\tw{2}\type{F}_4(q^2)$ for $p\neq 3$.
\end{theorem}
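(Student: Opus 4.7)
The plan is to exploit the rigid structure of these low-rank twisted groups to reduce, for each prime $p$, to a situation already covered by Theorem~\ref{mainthm:cyclic-defect-case} or by Conjecture~\ref{conj:Navarro-Tiep1}. Specifically, I aim to show that every $p$-block of each group under consideration has defect group that is either cyclic or a full Sylow $p$-subgroup. In the cyclic case Theorem~\ref{mainthm:cyclic-defect-case} applies directly; in the full-defect case the height-zero characters in the block are precisely the $p'$-degree characters, so the desired equality $\lev(\chi)=\lev(\chi_{\bN_G(P)})$ is exactly the conclusion of Conjecture~\ref{conj:Navarro-Tiep1}. When the hypothesis $\lev(\chi)\geq 2$ cannot be satisfied at all, the conjecture is vacuous.

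For the Suzuki groups $G=\tw{2}\type{B}_2(q^2)$ with $q^2=2^{2n+1}>2$, the factorization
\[
|G|=q^4(q^2-1)(q^2-\sqrt{2}q+1)(q^2+\sqrt{2}q+1),
\]
in which the three odd factors are pairwise coprime and each is the order of a cyclic maximal torus, gives cyclic Sylow $p$-subgroups for all odd $p\mid|G|$. All odd-prime blocks then have cyclic defect and are handled by Theorem~\ref{mainthm:cyclic-defect-case}. For the defining prime $p=2$, every block has defect zero or full defect, and Suzuki's character table (accessible via \cite{GAP}) shows $\lev(\chi)\leq 1$ for every $\chi\in\Irr(G)$; since no character can have $2$-rationality level equal to $1$, the hypothesis $\lev(\chi)\geq 2$ is vacuous.

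The Ree groups $G=\tw{2}\type{G}_2(q^2)$ admit an analogous treatment. Their generic maximal tori have orders $q^2-1$, $q^2+1$, $q^2-\sqrt{3}q+1$, and $q^2+\sqrt{3}q+1$; for odd $p\neq 3$ the Sylow $p$-subgroup of $G$ is cyclic, so Theorem~\ref{mainthm:cyclic-defect-case} applies. For $p=2$, the Sylow is elementary abelian of order $8$ and defect groups of $2$-blocks are elementary abelian subgroups of it; direct inspection of Ward's character table via \cite{GAP} confirms that every height-zero $2$-character of $G$ has level at most $1$, so the hypothesis is vacuous. For the defining prime $p=3$, every block has defect zero or full defect, and the same kind of direct inspection yields $\lev(\chi)\leq 1$.

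For $G=\tw{2}\type{F}_4(q^2)$ with $p\neq 3$, the dichotomy holds but the verification is more delicate. For primes $p\neq 2,3$ the distinct cyclotomic contributions to $|G|$ (namely those coming from $q^2-1$, $q^6+1$, $q^4-q^2+1$, $q^4+1$, and the Ree-type factors $q^2\pm\sqrt{2}q^3+q^2\pm\sqrt{2}q+1$) are pairwise coprime once the $(q^2-1)^2$ contribution is isolated; thus for $p\nmid q^2-1$ the Sylow $p$-subgroup is cyclic, while for $p\mid q^2-1$ Malle's block classification for $\tw{2}\type{F}_4(q^2)$ shows that every $p$-block of non-maximal defect has cyclic defect, and maximal-defect blocks have Sylow defect. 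Theorem~\ref{mainthm:cyclic-defect-case} handles the cyclic-defect cases, and the assumed Conjecture~\ref{conj:Navarro-Tiep1}, applied to the $p'$-degree height-zero characters in maximal-defect blocks, handles the rest. For the defining prime $p=2$ one verifies from the character formulas of $\tw{2}\type{F}_4(q^2)$, combined with the exponent bound of Lemma~\ref{lem:1}, that $\lev(\chi)\leq 1$ throughout, so again the hypothesis is vacuous. The main obstacle is certifying the block-theoretic dichotomy when $p\mid q^2-1$, which relies on Malle's classification. The case $p=3$ is excluded because $3$ divides several cyclotomic factors of $|G|$ simultaneously (for instance both $q^2+1$ and $q^4-q^2+1$), producing Sylow $3$-subgroups that are neither cyclic nor abelian and creating $3$-blocks with non-cyclic, non-Sylow defect groups that lie beyond the reach of the two tools above.
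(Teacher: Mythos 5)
Your overall strategy (cyclic defect via Theorem~\ref{mainthm:cyclic-defect-case}, full defect via Conjecture~\ref{conj:Navarro-Tiep1}, vacuous cases via level bounds) is the same dichotomy the paper uses, but the way you certify the dichotomy for $\tw{2}\type{F}_4(q^2)$ has a genuine gap. Your claim that for $p\nmid q^2-1$ (with $p\neq 2,3$) the Sylow $p$-subgroup is cyclic is false: writing $Q=q^2$, the order is $Q^{12}(Q-1)^2(Q+1)^2(Q^2+1)^2(Q^2-Q+1)(Q^4-Q^2+1)$, so the factors $q^2+1$ and $q^4+1$ occur \emph{squared}, and for odd $p$ dividing $q^2+1$ or $q^4+1$ the Sylow $p$-subgroups are abelian of rank $2$ (e.g.\ for $q^2=8$ and $p=5\mid q^4+1=65$ the Sylow $5$-subgroup is elementary abelian of order $25$). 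These are exactly the primes you never treat: you only invoke Malle's block data for $p\mid q^2-1$, so for $p\mid(q^2+1)$ or $p\mid(q^4+1)$ you have neither cyclic defect nor any argument that non-cyclic defect groups are full. The paper's proof handles precisely these primes by citing \cite[Bemerkung~1]{malle91} together with \cite[Lemma~2.6]{Kessar-Malle13}: each union $\mathcal{E}_p(G,s)$ contains a unique block of positive defect (one of at most three, only one non-cyclic, when $p\mid q^2-1$), its defect groups are Sylow $p$-subgroups of $\bC_{G^\ast}(s)$, and any non-cyclic such defect group is a full Sylow $p$-subgroup of $G$ when $p\neq 3$; only then does the assumed Conjecture~\ref{conj:Navarro-Tiep1} finish the argument.

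Two further points. For $p=2$ and $\tw{2}\type{F}_4(q^2)$, your claim that ``$\lev(\chi)\leq 1$ throughout'' cannot be obtained from Lemma~\ref{lem:1}, since the exponent of a Sylow $2$-subgroup is much larger than $2$, and it is not supported by any cited character data; the paper instead disposes of the defining prime via Theorem~\ref{thm:nonLiecross} (Humphreys' result that positive-defect blocks have maximal defect, plus \cite[Theorem~A3]{Navarro-Tiep21}), and you should do the same. Finally, your assertion that every character of $\tw{2}\type{B}_2(q^2)$ has $2$-rationality level at most $1$ is false: the two non-real characters of degree $\tfrac{1}{2}\sqrt{2q^2}\,(q^2-1)$ have values generating $\QQ_4$, hence level $2$ at $p=2$. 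This does not damage your conclusion, because those characters have even degree and so are not height zero in their (maximal-defect) block, but the statement needs to be restricted to height-zero characters to be correct.
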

\begin{proof}
By Theorems \ref{thm:nonLiecross} and \ref{thm:cyclic-defect-case}, we may assume that $p$ is not the defining characteristic for $G$ and that the Sylow $p$-subgroups of $G$ are non-cyclic.
Then we are left to consider the case that $G=\tw{2}\type{F}_4(q^2)$ with $q^2=2^{2n+1}$ and $p$ is an odd prime dividing $ (q^2-1)$, $(q^2+1)$, or $(q^4+1)$. 

If $p\nmid(q^2-1)$  \cite[Bemerkung~1]{malle91} yields that each $\mathcal{E}_p(G, s)$ for $s\in G^\ast$ a semisimple $p'$-element contains a unique block of positive defect, which therefore has as defect groups a Sylow $p$-subgroup of $\cent{G^\ast}{s}$, using \cite[Lemma~2.6]{Kessar-Malle13}. If instead $p\mid(q^2-1)$, we have by \cite[Bemerkung~1]{malle91} that each $\mathcal{E}_p(G, s)$ has one or three blocks of positive defect, but only one of these is noncyclic. In either case, a block $B$ with non-cyclic defect groups has maximal defect for $p\neq 3$, completing the proof by our assumption that Conjecture \ref{conj:Navarro-Tiep1} holds. 
\end{proof}

\medskip

Our next several  examples will come from linear groups, especially in the case $p=2$.  Let $\wt{G}:=\GL_n( q)$, $G=\SL_n(q)$, and $S=\PSL_n(q)$, and assume that $q$ is odd and $p=2$. 

In this situation, consider a block $\wt{B}$ of $\wt{G}$ covering a block $B$ of $G$. These can be chosen so that $\wt{B}= \mathcal{E}_2(\wt{G}, \wt{s})$ for some odd-order semisimple element $\wt{s}$ of $\wt{G}^\ast\cong \wt{G}$, by \cite[Theorems~9.12 and 21.14]{CE04}. Further, a Sylow $2$-subgroup of $\cent{\wt{G}^\ast}{s}$ gives a defect group for $\wt{B}$ by \cite[Corollary~(5E)]{FS82}.
Now, $\cent{\wt{G}^\ast}{\wt s}$ is a product $\cent{\wt{G}^\ast}{\wt s}=\prod \GL_{m_i}(q^{d_i})$, where $m_i$ and $d_i$ correspond to the multiplicities and degrees of the eigenvalues of $\wt{s}$ and $n=\sum m_id_i$.

\begin{theorem}\label{thm:PSL2}
Let $M$ be a quasisimple group with $M/\zent{M}=\PSL_2(q)$, where
$q\geq 5$ is a prime power. Then Conjecture \ref{conj:main} holds for $M$.
\end{theorem}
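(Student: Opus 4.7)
The plan is a straightforward case analysis on the prime $p$ and the field size $q$, reducing everything to cases already handled in the paper. If $q \in \{4,5,9\}$ then $S = M/\bZ(M) \cong \PSL_2(q)$ is alternating (and has an exceptional Schur multiplier when $q=9$), and if $p = p_0$ is the defining characteristic of $\PSL_2(q)$, then $S$ is of Lie type in characteristic $p$; in either case Theorem~\ref{thm:nonLiecross} applies. So we may assume $q \geq 7$, $q \neq 9$, and $p \neq p_0$, so that $M$ is either $\SL_2(q)$ or $\PSL_2(q)$.

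If $p$ is odd, then a Sylow $p$-subgroup of $M$ lies in the image of a split or non-split maximal torus of $\SL_2(q)$ of order $q \pm 1$, hence is cyclic. Every $p$-block of $M$ therefore has cyclic defect group, and Theorem~\ref{mainthm:cyclic-defect-case} yields the desired equality.

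It remains to treat $p=2$ with $q$ odd, where the Sylow $2$-subgroup of $M$ is generalized quaternion (if $M=\SL_2(q)$) or dihedral (if $M = \PSL_2(q)$). Every non-principal $2$-block of $M$ belongs to a rational Lusztig series $\mathcal{E}_2(M, s)$ for some odd-order semisimple element $s \neq 1$ in the dual group, whose centralizer is a torus with cyclic Sylow $2$-subgroup; hence such blocks have cyclic defect and are again handled by Theorem~\ref{mainthm:cyclic-defect-case}. For the principal $2$-block $B_0$, the defect group is the full Sylow $2$-subgroup and its height-zero characters are precisely the odd-degree irreducibles of $M$: the trivial and Steinberg characters (both rational), and a pair of exceptional characters of degree $(q+\epsilon)/2$ for $\epsilon = (-1)^{(q-1)/2}$. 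These exceptional characters take values in $\QQ(\sqrt{\epsilon q})$, and the quadratic Gauss sum identity $\sqrt{p_0^*} \in \QQ(\zeta_{p_0})$ with $p_0^* = (-1)^{(p_0-1)/2}p_0$ gives $\QQ(\sqrt{\epsilon q}) \subseteq \QQ(\zeta_{p_0})$. Since $p_0$ is odd, the $2$-part of this conductor is trivial, so $\lev(\chi) = 0$ for every height-zero character of $B_0$, and the hypothesis $\lev(\chi) \geq 2$ is vacuously violated.

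The main obstacle is this last (principal $2$-block) case, where the defect group is non-cyclic and the previously proved theorems do not directly apply; the resolution is the explicit containment $\QQ(\chi) \subseteq \QQ(\zeta_{p_0})$ for every odd-degree irreducible character of $M$, which precludes $\lev(\chi) \geq 2$ altogether.
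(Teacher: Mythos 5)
Your argument is correct, and its skeleton matches the paper's: for $p$ odd and nondefining the Sylow $p$-subgroups (hence all defect groups) are cyclic and Theorem~\ref{mainthm:cyclic-defect-case} applies, while the defining-characteristic and small/exceptional-multiplier cases ($q\in\{5,9\}$, where $S$ is alternating) fall under Theorem~\ref{thm:nonLiecross}; and for $p=2$ you, like the paper, show the blocks of non-maximal defect are cyclic-defect via the parametrization $\wt B\subseteq\mathcal{E}_2(\cdot,s)$ with $\cent{G^\ast}{s}$ a (cyclic) torus. Where you genuinely diverge is the maximal-defect case for $p=2$: the paper disposes of it by quoting \cite[Theorem~A3]{Navarro-Tiep21} (the $p=2$ case of Conjecture~\ref{conj:Navarro-Tiep1}), whereas you observe directly that the height-zero characters of the principal $2$-block are the four odd-degree characters $1,\St$, and the pair of degree $(q+\epsilon)/2$ with values in $\QQ(\sqrt{\epsilon q})\subseteq\QQ(\zeta_{q_0})$, so $\lev(\chi)=0$ and the hypothesis $\lev(\chi)\geq 2$ is never met. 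This buys a more elementary, self-contained treatment (classical character-table facts in place of the deep result in \cite{Navarro-Tiep21}), at the cost of being special to $\PSL_2(q)$; the paper's route via $\GL_2(q)$, \cite{FS82} and \cite{Navarro-Tiep21} is the one that generalizes (compare Proposition~\ref{prop:typeAex}). Two presentational points you should tighten: for $M=\PSL_2(q)$ the series $\mathcal{E}_2(M,s)$ is not literally defined since $\PSL_2(q)$ is not of the form $\bG^F$ for $q$ odd, so the Lusztig-series step should be run in $\SL_2(q)$ and transferred to the central quotient by block domination (defect groups pass to $D/\zent{M}$, preserving cyclicity); and the identification of defect groups with Sylow $2$-subgroups of the dual centralizer needs a citation (e.g.\ \cite[Corollary~(5E)]{FS82} via $\GL_2(q)$, as in the paper, or the classical description of the $2$-blocks of $\SL_2(q)$), as does the fact that $\mathcal{E}_2(\SL_2(q),1)$ is exactly the principal block, which your phrase ``every non-principal block has $s\neq 1$'' implicitly uses.
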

\begin{proof}
Let $q$ be a power of a prime $q_0$.
 In this case, the Sylow $p$-subgroups of $M$ are cyclic unless $p\in\{2,q_0\}$. So, we may assume by Theorems \ref{thm:cyclic-defect-case} and \ref{thm:nonLiecross} that $p=2$ and by \cite[Theorem~A3]{Navarro-Tiep21} we need only consider blocks of $M$ with non-maximal defect. We have $M\in\{G, S\}$, where $G=\SL_2(q)$ and $S=\PSL_2(q)$. Let $\wt{G}=\GL_2(q)$ and let $\bar B$ be a block of $M$ with positive, non-maximal defect dominated by a block $B$ of  $G$. Let $\wt{B}$ be a block of $\wt{G}$ covering $B$. Then as discussed above, there is some odd-order, semisimple element $\wt s$ of $\wt{G}^\ast\cong \wt G$ such that $\wt{B}=\mathcal{E}_2(\wt{G}, \wt s)$
 and a Sylow $2$-subgroup of $\cent{\wt{G}^\ast}{\wt s}$ gives a defect group for $\wt{B}$.
 Then with our assumption that $\bar B$ is not of maximal defect, we see that any such defect group has cyclic intersection with $G$, so that $B$ (hence $\bar B$) has cyclic defect groups. Then we again apply Theorem \ref{thm:cyclic-defect-case}.
\end{proof}

\begin{proposition}\label{prop:typeAex}
Let $\wt{G}=\GL_n(q)$ with $q$ odd and let $\wt{B}=\mathcal{E}_2(\wt{G}, \wt s)$ be a $2$-block of $\wt{G}$ as discussed above. Further suppose that $q\equiv -1\pmod 4$ and that each $d_i$ is odd, in the notation above. Then $\wt{B}$ satisfies Conjecture \ref{conj:main}.
\end{proposition}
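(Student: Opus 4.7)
The plan is to exploit Jordan decomposition for $\wt{G} = \GL_n(q)$ together with an analysis of character values on the defect group and its centralizer. Let $\chi \in \Irr(\wt B)$ be a character with $\lev(\chi) \geq 2$ (otherwise there is nothing to prove). By the Jordan decomposition of characters in $\wt G$, $\chi$ corresponds to a pair $(\wt s \wt t, \psi)$ where $\wt t$ is a $2$-element of $\wt H := \cent{\wt{G}^\ast}{\wt s} = \prod_i \GL_{m_i}(q^{d_i})$ and $\psi$ is a unipotent character of $\cent{\wt H}{\wt t}$. The defect group $D$ is a Sylow $2$-subgroup of $\wt H$, and under the hypotheses each $q^{d_i} \equiv -1 \pmod 4$ (since $d_i$ is odd and $q \equiv -1 \pmod 4$).

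The first step is to reduce the search for an element realizing $\lev(\chi)$ to elements whose $2$-part already lies in $D$. This follows from the standard block-theoretic vanishing result: $\chi(g) = 0$ whenever $g_2$ is not $\wt G$-conjugate into $D$ (cf.\ the argument used in Lemma~\ref{lem:1}). Thus it suffices to consider $g = g_2 g_{2'}$ with $g_2 \in D$ and $g_{2'} \in \cent{\wt G}{g_2}$, and to show that among such $g$ achieving $\lev(\chi(g)) = \lev(\chi)$, one may in fact take $g_{2'} \in \bN_{\wt G}(D)$.

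The second step is to use the character-value formula for semisimple-times-$2$-singular elements, obtained via Deligne--Lusztig theory together with the Jordan decomposition isometry. Under the hypotheses, the centralizer $\cent{\wt H}{\wt t}$ is again a product $\prod_j \GL_{n_j}(q^{e_j})$ with each $q^{e_j} \equiv -1 \pmod 4$ (because the eigenvalues of $\wt t$ are $2$-power roots of unity over the appropriate extension), and the Sylow $2$-subgroups of each factor have a controlled iterated wreath-product structure beginning from the Sylow $2$-subgroup of the Coxeter torus $C_{q^{e_j}+1}$. The plan is to show that the Galois action of $\mathcal{I}'$ on the character values of $\chi$ is detected by its action on values at elements $g = g_2 g_{2'}$ with $g_2 \in D$ and $g_{2'}$ chosen in a ``Coxeter-type'' torus $\wt T$ of $\cent{\wt H}{g_2}$, and moreover that $\wt T$ lies in $\cent{\wt G}{D} \leq \bN_{\wt G}(D)$.

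The main obstacle lies in the last point: showing that $\wt T$ can be taken inside $\bN_{\wt G}(D)$, and that the supremum of $\lev(\chi(g))$ over such $g$ already equals $\lev(\chi)$. This requires a careful inductive analysis (on $n$, or on $|D|$) of the interaction between the Deligne--Lusztig character formula, the torus decomposition of $\wt H$, and the Galois group $\mathcal{I}'$. The hypothesis that every $d_i$ is odd and $q \equiv -1 \pmod 4$ is crucial precisely here, as it ensures a uniform ``twisted'' structure across all factors that makes the torus $\wt T$ simultaneously large enough to detect the level and small enough to centralize $D$. Once this is established, the inequality $\lev(\chi) \leq \lev(\chi_{\bN_{\wt G}(D)})$ follows, and the reverse inequality is automatic.
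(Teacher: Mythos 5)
There is a genuine gap: your argument is a plan rather than a proof. The entire conclusion rests on the final claim --- that the Galois action of $\mathcal{I}'$ on the values of $\chi$ is already detected on elements $g=g_2g_{2'}$ with $g_2\in D$ and $g_{2'}$ in a ``Coxeter-type'' torus $\wt T\leq \cent{\wt G}{D}$ --- and you explicitly leave this unestablished (``the main obstacle lies in the last point''). Nothing in the proposal supplies the inductive Deligne--Lusztig analysis you invoke, so the inequality $\lev(\chi)\leq\lev(\chi_{\bN_{\wt G}(D)})$ is never actually obtained. There is also a concrete factual slip along the way: for a nontrivial $2$-element $\wt t$ of $\wt H=\prod_i\GL_{m_i}(q^{d_i})$, the eigenvalues of $\wt t$ of order at least $4$ generate \emph{even}-degree extensions (since $q^{d_i}\equiv -1\pmod 4$, the fourth roots of unity do not lie in $\FF_{q^{d_i}}$), so the factors $\GL_{n_j}(q^{e_j})$ of $\cent{\wt H}{\wt t}$ can have $q^{e_j}\equiv 1\pmod 4$; the ``uniform twisted structure'' you rely on does not persist after passing to $\cent{\wt H}{\wt t}$.

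The paper's proof shows the statement is in fact vacuous under the stated hypotheses, which you never consider. Since $q^{d_i}\equiv -1\pmod 4$ for each $i$, the Carter--Fong description of Sylow $2$-subgroups of $\GL_{m_i}(q^{d_i})$ (iterated wreath products built from a semidihedral $\Syl_2(\GL_2(q^{d_i}))$) gives $\exp(D_i/D_i')\leq 2$ for each factor, hence $\exp(D/D')\leq 2$. By Theorem D of Isaacs--Liebeck--Navarro--Tiep, every odd-degree character of the principal block of $C=\cent{\wt G^\ast}{\wt s}$ is then $2$-rational, and by Jordan decomposition (Fong--Srinivasan) combined with its Galois equivariance (Srinivasan--Vinroot) the height-zero characters of $\wt B$ are $2$-rational as well. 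So no character of $\wt B$ satisfies $\lev(\chi)\geq 2$, and Conjecture~\ref{conj:main} holds for $\wt B$ trivially. Even if your torus analysis could be completed, it would be a far heavier route than is needed; the hypotheses $q\equiv -1\pmod 4$ and $d_i$ odd are there precisely to kill the $2$-irrationality at the source, not to organize a character-value computation on $\bN_{\wt G}(D)$.
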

\begin{proof}
Let $D$ be a defect group for $\wt{B}$, such that $D=\prod D_i$ with 
$D_i\in \mathrm{Syl}_2(\GL_{m_i}(q^{d_i}))$. Note that in this case, 
$q^{d_i}\equiv -1\pmod 4$ for each factor $\GL_{m_i}(q^{d_i})$ of $C:=\cent{\wt{G}^\ast}{\wt s}$. 

We claim that in this situation,  each $D_i/D_i'$ has exponent at most $2$, and hence so does $D/D'$. We  can see this from the description of Sylow $2$-subgroups of $\GL_{m}(q^d)$ in \cite{carterfong}. Indeed,  by the description in loc. cit., such a group is a direct product of Sylow $2$ subgroups of $\GL_{2^j}(q)$ for various powers  $2^j$ of $2$. Hence, it suffices to prove the claim for $m$ a power of $2$. A Sylow $2$-subgroup $P_{2^j}$ of $\GL_{2^j}(q^d)$ is an iterated wreath product $P_2\wr C_2\wr C_2\cdots\wr C_2$, where $P_2\in\Syl_2(\GL_2(q^d))$. Since the latter is  semidihedral, we know $\exp(P_2/P_2')\leq 2$. For $j\geq 2$, we have $P_{2^j}=P_{2^{j-1}}\wr C_2$. Then  $P_{2^j}/P_{2^{j}}'\cong P_{2^{j-1}}^2/\langle (P_{2^{j-1}}')^2, [P_{2^{j-1}}^2, C_2]\rangle \times C_2$, and we can see inductively that $\exp(P_{2^j}/P_{2^j}')\leq 2$. This proves our claim. (See also \cite[Proposition~4.3]{ILNT}).

 Hence, by \cite[Theorem D]{ILNT},  each odd-degree character in the principal block $B_0(C)$ of $C$ is $2$-rational. But then the same is true for the height-zero characters of $\wt{B}$ by the main Theorem of  \cite{srinivasanvinroot}, since these correspond to the height-zero characters in $B_0(C)$ by Jordan decomposition using  \cite[Theorem~(7A)]{FS82}. 
\end{proof}

\begin{remark}
We remark that the same proof shows that when $\wt{G}=\GU_n(q)$ with $q\equiv 1\pmod 4$ and $\wt{B}=\mathcal{E}_2(\wt{G}, \wt{s})$ is a $2$-block of $\wt{G}$ with $\cent{\wt{G}^\ast}{\wt{s}}=\prod \GL^\eta_{m_i}(q^{d_i})$ with $q^{d_i}\equiv -\eta\pmod 4$ for each $i$, then Conjecture \ref{conj:main} holds for $\wt{B}$. 

Further, using the description of Sylow $2$-subgroups of $\Sp_{2n}(q)$, $\SO_{2n+1}(q)$, $O_{2n+1}(q)$, $\SO_{2n}^\pm(q)$, and $O_{2n}^\pm(q)$ in \cite{carterfong} and arguing similarly to before, we see that each of these groups also satisfy $\exp(P/P')\leq 2$ for a Sylow $2$ subgroup $P$. So, taking $G$ to be a classical-type group $\operatorname{CSp}_{2n}(q)$, $\SO_{2n+1}(q)$, or $\operatorname{CSO}_{2n}^\pm(q)$ and a block $B$ such that $B=\mathcal{E}_2(G, s)$ (again applying \cite[Theorems~9.12 and 21.14]{CE04}), we may obtain analogous examples using \cite{FS89} in place of \cite{FS82}. This could be further extended to classical types whose center is disconnected in the cases that each $\chi\in\Irr(B)$ lies in a series $\mathcal{E}(G, st)$ where $\cent{\bG^\ast}{st}$ is connected, using  \cite{STV} in place of \cite{srinivasanvinroot}.
\end{remark}


\section{Primitive characters of prime degree}\label{sec:primitive}

This section handles the primitivity case of
Theorem~\ref{mainthm:prime-degree-case}.
We begin with an easy observation.

\begin{lemma}\label{lem:observation}
Let $p$ be a prime, $G$ a finite group, $P\in\Syl_p(G)$, and
$\chi\in \Irr_{p'}(G)$ with $\lev(\chi)\geq 2$. To prove
Conjecture~\ref{conj:Navarro-Tiep1}, it suffices to show that
$\lev(\chi)=\lev(\chi_P)$ and, additionally, $\QQ_4\subseteq
\QQ(\chi_P)$ if $p=2$.
\end{lemma}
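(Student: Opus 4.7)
The plan is to directly verify $\QQ_{p^{\lev(\chi)}} = \QQ_p(\chi_P)$ from the two hypothesized conditions, by pinning down the conductor $c(\chi_P)$ exactly.

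Write $\ell := \lev(\chi) \geq 2$. The first observation I would record is that since $P$ is a $p$-group, every value $\chi_P(x)$ is a sum of $p$-power roots of unity, so $\QQ(\chi_P) \subseteq \QQ_{\exp(P)}$ and consequently $c(\chi_P)$ is itself a $p$-power. The assumption $\lev(\chi_P) = \ell$, i.e.\ $\nu(c(\chi_P)) = \ell$, then pins $c(\chi_P)$ down to $p^\ell$ as soon as the collapsed possibility $c(\chi_P) = 1$ is ruled out. For odd $p$ this exclusion is automatic, because $\nu(1) = 0 \neq \ell$. For $p = 2$ the admissible conductor values form the set $\{1\} \cup \{2^k : k \geq 2\}$ (the value $2$ never occurs because $\QQ_2 = \QQ$), and this is precisely where the supplementary hypothesis $\QQ_4 \subseteq \QQ(\chi_P)$ is invoked: it directly forbids $c(\chi_P) = 1$, leaving $c(\chi_P) = 2^\ell$. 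In either case one concludes $\QQ(\chi_P) = \QQ_{p^\ell}$.

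Taking the compositum with $\QQ_p$ then yields
\[
\QQ_p(\chi_P) \;=\; \QQ_p \cdot \QQ_{p^\ell} \;=\; \QQ_{p^\ell} \;=\; \QQ_{p^{\lev(\chi)}},
\]
using $\QQ_p \subseteq \QQ_{p^\ell}$ for odd $p$ (which holds since $\ell \geq 1$) and $\QQ_p = \QQ$ for $p = 2$. This is precisely the identity claimed by Conjecture~\ref{conj:Navarro-Tiep1}.

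I do not anticipate any substantive obstacle here: the argument amounts to bookkeeping with cyclotomic conductors, leveraging the fact that restriction to a $p$-group forces the conductor to be a $p$-power. The only delicate point is the asymmetry at $p = 2$, where $\QQ_2 = \QQ$ makes $2$ unavailable as a conductor value, so one has to explicitly rule out the alternative $c(\chi_P) = 1$ --- which is exactly the role of the auxiliary hypothesis $\QQ_4 \subseteq \QQ(\chi_P)$.
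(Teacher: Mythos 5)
Your opening bookkeeping is fine: since $\chi_P$ takes values that are sums of $p$-power roots of unity, $c(\chi_P)$ is a power of $p$, and $\lev(\chi_P)=\ell$ then gives $c(\chi_P)=p^{\ell}$ (note the case $c(\chi_P)=1$ is already impossible for any $p$, simply because $\ell\geq 2$, so the $\QQ_4$ hypothesis is not needed for that). The genuine gap is the next step: ``in either case one concludes $\QQ(\chi_P)=\QQ_{p^{\ell}}$.'' Knowing that the conductor of $\QQ(\chi_P)$ is exactly $p^{\ell}$ only says that $\QQ(\chi_P)\subseteq \QQ_{p^{\ell}}$ and that it lies in no smaller cyclotomic field; it does not say that $\QQ(\chi_P)$ is all of $\QQ_{p^{\ell}}$. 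There are proper subfields of $\QQ_{p^{\ell}}$ of conductor exactly $p^{\ell}$, for instance the maximal real subfield $\QQ(\zeta_{p^{\ell}}+\zeta_{p^{\ell}}^{-1})$, or $\QQ(\sqrt{2})\subset \QQ_{8}$ when $p=2$, $\ell=3$. For $p=2$ this is fatal as written: there $\QQ_p(\chi_P)=\QQ(\chi_P)$, so the conjecture's conclusion literally is $\QQ(\chi_P)=\QQ_{2^{\ell}}$, and conductor information alone cannot rule out, say, $\QQ(\chi_P)=\QQ(\sqrt{2})$. This is exactly what the extra hypothesis $\QQ_4\subseteq \QQ(\chi_P)$ is for --- not to exclude $c(\chi_P)=1$, as you suggest, but to place $\QQ(\chi_P)$ between $\QQ_4$ and $\QQ_{2^{\ell}}$, where $\Gal(\QQ_{2^{\ell}}/\QQ_4)$ is cyclic and hence the intermediate fields are precisely the $\QQ_{2^{b}}$ with $2\leq b\leq \ell$; the conductor then forces $b=\ell$.

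The paper supplies this missing Galois-theoretic step for odd $p$ as well: one works with $\QQ_p(\chi_P)$, an intermediate field of the cyclic $p$-power extension $\QQ_{p^{\ell}}/\QQ_p$, so it equals some $\QQ_{p^{b}}$, and $\lev(\chi_P)=\ell$ forces $b=\ell$. (For odd $p$ your final conclusion does happen to be true --- any subfield of $\QQ_{p^{\ell}}$ of conductor $p^{\ell}$ composes with $\QQ_p$ to give $\QQ_{p^{\ell}}$ --- but that requires precisely this chain/cyclicity argument; it does not follow from the intermediate equality you asserted, which can fail, e.g.\ for real-valued $\chi_P$.) So the proposal is not correct as written: replace the claim $\QQ(\chi_P)=\QQ_{p^{\ell}}$ by the analysis of the subfields of $\QQ_{p^{\ell}}$ containing $\QQ_p$ (respectively $\QQ_4$ when $p=2$). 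Your remark that $c(\chi_P)$ is automatically a $p$-power is a nice elementary substitute for the paper's citation of \cite[Lemma~7.1]{Navarro-Tiep21}, but it is not the crux of the lemma.
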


\begin{proof}
Let $a:=\lev(\chi)=\lev(\chi_P)\geq 2$. We aim to show that
$\QQ_p(\chi_P)=\QQ_{p^a}$. First we have $\QQ(\chi_P)\subseteq
\QQ_{p^a}$ (see \cite[Lemma~7.1]{Navarro-Tiep21}). If $p$ is odd
then all the subfields of $\QQ_{p^a}$ containing $\QQ_p$ are of the
form $\QQ_{p^b}$ for $1\leq b\leq a$ (see the proof of
\cite[Theorem~2.3]{Navarro-Tiep21}), and therefore the fact that
$\lev(\chi_P)=a$ forces $\QQ_p(\chi_P)$ to be the entire
$\QQ_{p^a}$.

Let $p=2$ and assume that $\QQ_4\subseteq \QQ(\chi_P)$. Now all the
subfields of $\QQ_{2^a}$ containing $\QQ_4$ are again of the form
$\QQ_{2^b}$ for $1\leq b\leq a$, and we still have
$\QQ_{2^a}=\QQ(\chi_P)$.
\end{proof}

\begin{lemma}\label{lem:faithful-red}
Let $K\nor G$ and $\overline{\chi}\in\Irr(G/K)$. Let $\chi$ be the
inflation of $\overline{\chi}$ up to $G$. Let $P\in\Syl_p(G)$ and
$\overline{P}:=PK/K\in\Syl_p(G/K)$. Then
\[
\lev(\chi)=\lev(\overline{\chi}) \text{ and }
\lev(\chi_P)=\lev(\overline{\chi}_{\overline{P}}).
\]
\end{lemma}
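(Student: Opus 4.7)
The plan is to observe that inflation preserves character values, and the same goes for restriction to the relevant Sylow subgroups, so the two sides of each equality produce the same field of values.

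More precisely, by definition of inflation, $\chi(g)=\overline{\chi}(gK)$ for every $g\in G$. Since the natural projection $\pi\colon G\to G/K$ is surjective, the multisets $\{\chi(g):g\in G\}$ and $\{\overline{\chi}(\bar g):\bar g\in G/K\}$ coincide. Hence $\QQ(\chi)=\QQ(\overline{\chi})$ and therefore $c(\chi)=c(\overline{\chi})$, giving $\lev(\chi)=\lev(\overline{\chi})$.

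For the second equality I would restrict $\pi$ to $P$. Its image is $PK/K=\overline{P}$, so $\pi|_P\colon P\to\overline{P}$ is surjective. For $g\in P$ we have $\chi_P(g)=\chi(g)=\overline{\chi}(gK)=\overline{\chi}_{\overline{P}}(gK)$, so the set of values of $\chi_P$ on $P$ equals the set of values of $\overline{\chi}_{\overline{P}}$ on $\overline{P}$. Therefore $\QQ(\chi_P)=\QQ(\overline{\chi}_{\overline{P}})$, and the two conductors (hence the two $p$-rationality levels) agree.

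There is essentially no obstacle here: one only needs the standard facts that inflation is value-preserving, that $\pi(P)=\overline{P}$ by the very definition of $\overline{P}$, and that the conductor depends only on the set of character values. So the lemma reduces to these elementary observations.
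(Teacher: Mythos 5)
Your argument is correct and is exactly the paper's proof: the paper likewise notes that the value sets of $\chi$ and $\overline{\chi}$, and of $\chi_P$ and $\overline{\chi}_{\overline{P}}$, coincide (the latter because $\pi(P)=PK/K=\overline{P}$), so the conductors and hence the levels agree. You have simply written out the same one-line observation in more detail.
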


\begin{proof}
This follows from the fact that the sets of values of $\chi$ and
$\overline{\chi}$, as well as those of $\chi_P$ and
$\overline{\chi}_{\overline{P}}$, are the same.
\end{proof}

\begin{lemma}\label{lem:level-product}
Let $K_1,...,K_n$ be finite abelian extensions of $\QQ$. Let
$K:=K_1\cdots K_n$ denote the smallest subfield of $\CC$ containing
all $K_i$. Then $K$ is also a finite abelian extension of $\QQ$ and
\[
\lev(K)=\max\{\lev(K_i):1\leq i\leq n\}.
\]
\end{lemma}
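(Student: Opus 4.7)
My approach is to identify the conductor of the compositum with the least common multiple of the conductors of the factors, and then take $p$-adic valuations. The key ingredient is the Kronecker--Weber theorem, together with the well-known identity $\QQ_m \cdot \QQ_n = \QQ_{\lcm(m,n)}$.

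First, I would apply Kronecker--Weber to each $K_i$ to realize it inside a cyclotomic field: $K_i \subseteq \QQ_{c_i}$, where $c_i := c(K_i)$ is the (minimal) conductor. Set $N := \lcm(c_1, \ldots, c_n)$. Using $\QQ_m \cdot \QQ_n = \QQ_{\lcm(m,n)}$ iteratively, one obtains
\[
K = K_1 \cdots K_n \subseteq \QQ_{c_1} \cdots \QQ_{c_n} = \QQ_N .
\]
In particular $K/\QQ$ is abelian, so that $c(K)$ is itself a conductor in the Kronecker--Weber sense, and $c(K) \mid N$.

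Next, I would establish the reverse divisibility $N \mid c(K)$. Since $K_i \subseteq K \subseteq \QQ_{c(K)}$ for every $i$, the minimality defining $c_i$ forces $c_i \mid c(K)$, and hence $N = \lcm(c_1,\ldots,c_n) \mid c(K)$. Combined with the previous paragraph this gives
\[
c(K) \;=\; \lcm\bigl(c(K_1),\ldots,c(K_n)\bigr).
\]
(Here I am implicitly using that the normalization $c \not\equiv 2 \pmod 4$ is preserved under lcm, since a collection of integers each equal to $1$, odd, or divisible by $4$ has an lcm of the same form.)

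Finally, applying the $p$-adic valuation $\nu_p$, which turns lcm into max on prime powers, yields
\[
\lev(K) \;=\; \nu_p(c(K)) \;=\; \nu_p\!\left(\lcm(c_1,\ldots,c_n)\right) \;=\; \max_{1 \le i \le n} \nu_p(c_i) \;=\; \max_{1 \le i \le n} \lev(K_i),
\]
as required. There is no real obstacle in this argument; the only mild subtlety is the normalization issue for conductors $\equiv 2 \pmod 4$, and this is disposed of by the parity observation above.
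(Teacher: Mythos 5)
Your proof is correct and follows essentially the same route as the paper: realize everything inside cyclotomic fields via Kronecker--Weber, use $\QQ_m\cdot\QQ_{m'}=\QQ_{\lcm(m,m')}$ to bound the compositum, and get the reverse inequality from the inclusions $K_i\subseteq K$. The only difference is cosmetic: you prove the sharper identity $c(K)=\lcm(c(K_1),\dots,c(K_n))$ and then take $\nu_p$, whereas the paper only embeds $K$ into $\QQ_{p^a}\QQ_{\prod_i c(K_i)_{p'}}$ to bound the $p$-part directly, so both arguments rest on the same standard cyclotomic facts.
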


\begin{proof}
It is easy to see that $K\subseteq \QQ_{\lcm(c(K_1),...,c(K_n))}$,
so $K$ is a finite abelian extension of $\QQ$.

Suppose $a:=\max\{\lev(K_i):1\leq i\leq n\}$. Then $K_i\subseteq
\QQ_{p^a}\QQ_{c(K_i)_{p'}}$ and thus $K\subseteq
\QQ_{p^a}\QQ_{\prod_i c(K_i)_{p'}}$, implying that $\lev(K)\leq a$.
The lemma follows as it is clear that $\lev(K_i)\leq \lev(K)$ for
every $i$.
\end{proof}

For the remainder of this section, a finite group $G$ is called
\emph{almost quasisimple} if there exists a nonabelian simple group
$S$ such that $S\nor G/\bZ(G) \leq \Aut(S)$.

We shall need the following rather technical result, extracted from
\cite{HTZ}.

\begin{lemma}\label{lem:technical}
Let $G$ be an almost quasisimple irreducible primitive subgroup of
$\GL(r,\CC)$, where $r$ is a prime, and $\chi\in\Irr(G)$ the
corresponding character. Let $S$ be the socle of $G/\bZ(G)$ and $M$
the last term in the derived series of $G$. Let $\pi$ be the set of
prime divisors of $r|\bZ(M)|$ and $Z_\pi$ denotes the Hall
$\pi$-subgroup of (the abelian group) $Z:=\bZ(G)$. Let $N:=Z_\pi M$.
Then the following hold.

\begin{enumerate}[\rm(i)]
\item $M$ is quasisimple with $M/\bZ(M)=S$ and $\chi_M$ is
irreducible.

\item  $\varphi:=\chi_N\in\Irr(N)$ and $\varphi$ has the
canonical extension $\widehat{\varphi}$ to $G$, in the sense of
\cite[Corollary~6.2]{Navarro18}. Furthermore,
$\chi=\widehat{\varphi}\lambda$ for some linear character $\lambda$
of $G/N$.

\item  $\QQ(\chi)=\QQ(\chi_M)\QQ(\mu)\QQ(\lambda)$ where
$\lambda$ is as in (ii) and $\mu$ is the irreducible (linear)
character of $Z_\pi$ lying under $\chi$.
\end{enumerate}
\end{lemma}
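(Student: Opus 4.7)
The plan is to tackle (i)--(iii) in turn, using Clifford--Gallagher theory together with the canonical-extension machinery of \cite[Corollary~6.2]{Navarro18}, mirroring the decomposition developed in \cite{HTZ}.

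For (i), I would begin by exploiting that $G/\bZ(G)$ has non-abelian simple socle $S$ and embeds into $\Aut(S)$. Schreier's conjecture (via CFSG) makes $\Out(S)$ solvable, so the derived series of $G$ terminates at a perfect subgroup $M$ whose image in $G/\bZ(G)$ is all of $S$, realizing $M$ as a quasisimple cover of $S$ with $M/\bZ(M) = S$. For the irreducibility of $\chi_M$, invoke Clifford's theorem: $\chi_M = e(\theta_1 + \cdots + \theta_t)$ with $G$-conjugate $\theta_i \in \Irr(M)$. Since $\chi(1) = r$ is prime and $M$ is perfect (so admits no non-trivial linear characters), we must have $e = t = 1$ and $\theta_1(1) = r$, forcing $\chi_M$ to be irreducible.

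For (ii), $Z_\pi \leq \bZ(G)$ acts on the underlying module by a single linear character $\mu$, so $\varphi := \chi_N$ is irreducible. The crucial point is that $[G:N]$ is coprime to both $\varphi(1) = r$ and $o(\varphi)$; equivalently, that $[G:N]$ is a $\pi'$-number. Indeed, $\varphi(1) = r \in \pi$, while $M = M'$ forces $\det(\chi_M) = 1$, so $\det(\varphi)$ factors through $N/N' \cong Z_\pi/(Z_\pi \cap M)$, a $\pi$-group, making $o(\varphi)$ a $\pi$-number. Granted the index coprimality, \cite[Corollary~6.2]{Navarro18} supplies the canonical extension $\widehat{\varphi}$ of $\varphi$ to $G$, and Gallagher's theorem yields $\chi = \widehat{\varphi}\lambda$ for a unique linear character $\lambda \in \Irr(G/N)$.

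For (iii), the inclusion $\QQ(\chi_M)\QQ(\mu)\QQ(\lambda) \subseteq \QQ(\chi)$ is direct: $\chi_M$ is the restriction of $\chi$ to $M$, $\mu$ is extracted from $\chi_{Z_\pi}$, and $\lambda = \chi/\widehat{\varphi}$ once the canonical extension is pinned down. For the reverse inclusion, take $\tau \in \Gal(\overline{\QQ}/\QQ)$ fixing $\chi_M$, $\mu$, and $\lambda$. Then $\tau$ fixes $\varphi$, and by uniqueness of the canonical extension, $(\widehat{\varphi})^\tau$ must coincide with $\widehat{\varphi^\tau} = \widehat{\varphi}$; combined with $\lambda^\tau = \lambda$ this gives $\chi^\tau = \widehat{\varphi}\lambda = \chi$, as desired.

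The principal obstacle is the numerical verification in (ii) that $[G:N]$ is a $\pi'$-number, which is what unlocks the canonical-extension machinery. Writing $[G:N] = [G : M\bZ(G)] \cdot [\bZ(G) : Z_\pi]$ (using that $M \cap \bZ(G) \leq \bZ(M)$ has $\pi$-order and hence lies in $Z_\pi$), the second factor is $\pi'$ by construction, but the first --- a divisor of $|\Out(S)|$ --- requires the case-by-case analysis of those quasisimple groups admitting a primitive irreducible prime-degree representation, precisely the classification worked out in \cite{HTZ}. Once that coprimality is in hand, (ii) follows by the canonical-extension corollary and Gallagher, and (iii) follows formally by the Galois-equivariance argument above.
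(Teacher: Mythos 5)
The paper does not give a self-contained argument here: its ``proof'' is a one-line citation to Proposition~3.5 and Corollary~4.4 of \cite{HTZ}. What you have done is reconstruct what those results actually do, and your reconstruction is sound in outline. Part (i) is complete: Schreier gives that the last term of the derived series of $G/\bZ(G)$ is $S$, derived subgroups pass to quotients, $\bZ(M)=M\cap\bZ(G)$, and the Clifford/faithfulness argument (a perfect $M$ has no nontrivial linear characters, and $M\not\leq\ker\chi$) forces $\chi_M\in\Irr(M)$ of degree $r$. Part (iii) is also fine: the compositum inclusion one way, and for the other direction the Galois-equivariance and uniqueness of the canonical extension plus Gallagher's uniqueness give $\chi^\tau=\chi$ whenever $\tau$ fixes $\chi_M,\mu,\lambda$. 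One small repair: your justification that $\QQ(\lambda)\subseteq\QQ(\chi)$ by writing $\lambda=\chi/\widehat{\varphi}$ is not literally valid pointwise (where $\widehat{\varphi}$ vanishes); the clean argument is the same one you use in the reverse direction --- any $\sigma$ fixing $\chi$ fixes $\varphi=\chi_N$, hence fixes $\widehat{\varphi}$ by uniqueness of the canonical extension, and then Gallagher's uniqueness forces $\lambda^\sigma=\lambda$.

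The one place where your write-up stops short of a proof is exactly the point you flag: the coprimality $\bigl(|G:N|,\,o(\varphi)\varphi(1)\bigr)=1$ needed to invoke \cite[Corollary~6.2]{Navarro18}. Your reduction is correct ($o(\varphi)$ is a $\pi$-number since $\det\varphi$ factors through $N/M\cong Z_\pi/\bZ(M)$, and $|G:N|=[G:\bZ(G)M]\cdot[\bZ(G):Z_\pi]$ with the second factor $\pi'$), but be aware that the remaining factor $[G:\bZ(G)M]$, a divisor of the outer part, is \emph{not} automatically a $\pi'$-number: $|\Out(S)|$ can share primes with $r|\bZ(M)|$ (for instance $\pi$ typically contains $2$), so ruling this out genuinely requires the classification of the possible $(S,r,M)$ and of which outer automorphisms can stabilize such a character --- this is precisely the content of \cite[Proposition~3.5]{HTZ} that the paper leans on. Since you explicitly defer that verification to \cite{HTZ}, your argument is, in effect, the same route as the paper's (which defers the entire lemma), only with the formal Clifford/Gallagher/Galois scaffolding written out; just don't present the coprimality as a routine bookkeeping step, because it is the actual heart of the cited result.
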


\begin{proof}
This follows from Proposition~3.5 and Corollary~4.4 of \cite{HTZ}
and their proofs.
\end{proof}

\begin{theorem}\label{primitive}
Let $p$ be a prime and $G$ a finite group. Let $\chi$ be an
irreducible primitive character of $G$ of prime degree not equal to $p$.
Suppose $\lev(\chi)\geq 2$. Then $\lev(\chi)=\lev(\chi_{P})$ for
$P\in\Syl_p(G)$. Furthermore, if $p=2$ then $\QQ_4\subseteq
\QQ(\chi_P)$.
\end{theorem}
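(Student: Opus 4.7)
The plan is to reduce to the almost quasisimple setting where Lemma~\ref{lem:technical} applies, then split on which factor of the decomposition $\chi=\hat\varphi\lambda$ carries the $p$-rationality level. First, using Lemma~\ref{lem:faithful-red} we may replace $G$ by $G/\ker{\chi}$ and assume $\chi$ is faithful; the classical structure of primitive prime-degree characters then places us in the setting of Lemma~\ref{lem:technical}. Letting $M,Z_\pi,N=Z_\pi M,\hat\varphi,\lambda$ and $\mu$ be as in that lemma, combining part~(iii) with Lemma~\ref{lem:level-product} gives
\[
a:=\lev(\chi)=\max\{\lev(\chi_M),\lev(\mu),\lev(\lambda)\}.
\]
Since $\lev(\chi_P)\leq a$ is automatic and $\QQ_4\subseteq\QQ_{2^a}\subseteq\QQ(\chi_P)$ will follow from $\lev(\chi_P)=a\geq 2$ when $p=2$, it suffices to establish the lower bound $\lev(\chi_P)\geq a$ in each of three cases.

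If $\lev(\chi_M)=a$, we would choose $P$ so that $P\cap M\in\Syl_p(M)$ and apply Theorem~\ref{thm:quasisimple} directly to the prime-degree character $\chi_M$ of the quasisimple group $M$ (admissible since $p\neq r$ and $\lev(\chi_M)\geq 2$); the resulting equality $\lev(\chi_M|_{P\cap M})=a$ transfers via $\chi|_{P\cap M}=\chi_M|_{P\cap M}$. If instead $\lev(\mu)=a\geq 2$, then $p\in\pi$, so $p\mid|\bZ(M)|$; the Sylow $p$-subgroup $Z_p$ of $Z_\pi$ lies in $\bZ(G)\leq P$, and on any $z\in Z_p$ with $\mu(z)$ of level $a$ the identity $\chi(z)=r\mu(z)$ immediately gives $\lev(\chi_P)\geq\lev(\mu)=a$.

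The hard part will be the remaining case $\lev(\lambda)=a>\max\{\lev(\chi_M),\lev(\mu)\}$, since there the full level of $\chi$ is carried only by the $G/N$-character $\lambda$ and need not a priori be detectable on the Sylow $p$-subgroup. The plan is a Galois-theoretic argument with $\sigma:=\sigma_{a-1}\in\mathcal{I}'$. Because $\sigma$ fixes cyclotomic values of level at most $a-1$, it stabilizes $\chi_M,\mu$ and hence $\varphi=\chi_N$; since the canonical extension is characterized by its determinantal order, a Galois-invariant datum, uniqueness forces $\hat\varphi^\sigma=\hat\varphi$. On the other hand $\lambda^\sigma=\lambda\cdot\beta$, where $\beta:=\lambda_p^{p^{a-1}}$ is a nontrivial linear character of $G/N$ of order exactly $p$, and $\beta|_P$ is likewise of order $p$ by the usual Sylow argument (since $\lambda_p|_P$ has order $p^a$). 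Thus
\[
\chi^\sigma=\hat\varphi\lambda^\sigma=\chi\cdot\beta,\qquad \chi^\sigma|_P=\chi|_P\cdot\beta|_P.
\]

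It would then suffice to show that $\chi|_P$ is not $\sigma$-fixed. Suppose for contradiction $\chi|_P\otimes\beta|_P=\chi|_P$. Writing $\chi|_P=\sum_\psi n_\psi\psi$ in $\Irr(P)$, the multiplicities $n_\psi$ must be constant on the orbits of the order-$p$ permutation $\psi\mapsto\psi\otimes\beta|_P$ of $\Irr(P)$. Orbits of size $p$ contribute a multiple of $p$ to $\chi|_P(1)$. An orbit of size $1$ corresponds to an irreducible $\psi$ with $\psi\otimes\beta|_P=\psi$, i.e., $\psi$ vanishing off the index-$p$ normal subgroup $\ker{\beta|_P}\triangleleft P$; Clifford theory presents any such $\psi$ as induced from a non-$P$-invariant irreducible of $\ker{\beta|_P}$, so that $p\mid\psi(1)$. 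Summing contributions we obtain $p\mid \chi|_P(1)=r$, contradicting $p\neq r$. Hence $\chi|_P$ is not $\sigma$-fixed, so $\lev(\chi_P)\geq a$ by \cite[Lemma~4.1]{Navarro-Tiep21}. The main obstacle throughout is this last case: coordinating the Galois action on $\hat\varphi$ (via uniqueness of the canonical extension) with the Clifford/degree obstruction coming from the primality of $r$ is what forces the desired lower bound.
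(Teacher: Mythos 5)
Your handling of the almost quasisimple case is largely sound, and in the subcase $\lev(\lambda)=a>\max\{\lev(\chi_M),\lev(\mu)\}$ your argument (showing $\chi^{\sigma_{a-1}}=\chi\cdot\beta$ with $\beta$ of order $p$, and then ruling out $\chi_P\beta_P=\chi_P$ by the Clifford-theoretic observation that $\beta_P$-invariant irreducibles of $P$ have degree divisible by $p$, contradicting $p\nmid r$) is correct and genuinely different from the paper's, which instead writes $\lambda(g)=\chi(g)\widehat{\varphi}(g)^{-1}$ for $g\in P$ and compares levels. However, there are two genuine gaps. The main one is the reduction itself: it is not true that a faithful primitive character of prime degree forces $G$ to be almost quasisimple. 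By \cite[Lemma~4.1]{HTZ} there is a second possibility, namely that $G$ contains a normal $r$-subgroup $R=\bZ(R)E$ with $E$ extraspecial of order $r^3$ (and $R=E$ or $\bZ(R)\cong C_4$). Your proposal silently discards this case, so Lemma~\ref{lem:technical} is simply not available there. The paper treats it separately: one may assume $G$ is non-solvable by the Isaacs--Navarro solution of the $p$-solvable case \cite{Isaacs-Navarro22}, and then \cite[Theorem~6.1]{HTZ} gives $\QQ(\chi)=\QQ(\chi_Z)$ for $Z=\bZ(G)$, after which the level (and $\QQ_4$, for $p=2$) is captured on the Sylow $p$-subgroup of the center. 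Without an argument of this kind your proof does not cover all primitive prime-degree characters.

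The second gap concerns the clause for $p=2$. You assert that $\QQ_4\subseteq\QQ_{2^a}\subseteq\QQ(\chi_P)$ ``will follow from $\lev(\chi_P)=a\geq 2$,'' but having $2$-rationality level $a\geq 2$ only means the conductor of $\QQ(\chi_P)$ has $2$-part $2^a$; it does not imply $\QQ_{2^a}\subseteq\QQ(\chi_P)$, nor even $\QQ_4\subseteq\QQ(\chi_P)$ (for instance $\QQ(\sqrt{2})$ and $\QQ(\sqrt{-2})$ have level $3$ but do not contain $i$). Likewise, your $\sigma_{a-1}$-argument only shows that $\QQ(\chi_P)$ is not contained in the fixed field of $\sigma_{a-1}$, which is strictly weaker than containing $\QQ_4$. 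This is exactly why the theorem states the $\QQ_4$-containment as a separate assertion and why the paper proves it case by case: via \cite[Theorem~A3]{Navarro-Tiep21} when the level is carried by $\chi_M$, via the central linear character $\mu$ restricted to a Sylow $2$-subgroup of $Z_\pi$ when it is carried by $\mu$, and via the rationality of $\widehat{\varphi}_P$ together with $\chi_P=\widehat{\varphi}_P\lambda_P$ (using $\QQ(\widehat\varphi)=\QQ(\varphi)$ from \cite[Corollary~6.4]{Navarro18}) when it is carried by $\lambda$. You would need to supply analogous arguments, in addition to the missing extraspecial-type case, for the proof to be complete.
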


\begin{proof}
We may assume that $\chi$ is faithful, by modding out by its kernel
and using Lemma~\ref{lem:faithful-red} if necessary. Let
\[r:=\chi(1).\] Then $G$ is an irreducible primitive subgroup of
$\GL(r,\CC)$. By \cite[Lemma~4.1]{HTZ}, we are in one of the
following situations.

\begin{enumerate}
\item[(A)] $G$ is almost quasisimple.

\item[(B)] $G$ contains a normal $r$-subgroup
 $R =\bZ(R)E$, where $E$ is an irreducible extraspecial $r$-group of order $r^{3}$,
 and either $R = E$ or $\bZ(R)\cong C_4$.
\end{enumerate}

\medskip

A. Consider the case when $G$ is almost quasisimple. As in
Lemma~\ref{lem:technical}, we use $Z$ for the center of $G$, $S$ for
the socle of $G/Z$, and $M$ the last term in the derived series of
$G$. Also, $\pi$ is the set of prime divisors of $r|\bZ(M)|$ and
$Z_\pi$ denotes the Hall $\pi$-subgroup of $Z$. Set $N:=Z_\pi M$.

By Lemmas~\ref{lem:level-product} and \ref{lem:technical}, we have
\[
\lev(\chi)=\max\{\lev(\chi_M),\lev(\mu),\lev(\lambda)\}
\]
for some linear characters $\mu$ of $Z_\pi$ and $\lambda$ of $G/N$.

\medskip

(i) Suppose first that $\lev(\chi)=\lev(\chi_M)$. Recall that $M$ is
quasisimple and $\chi_M\in\Irr(M)$, by Lemma~\ref{lem:technical}(i).
Using Theorem~\ref{thm:quasisimple}, we obtain
\[
\lev(\chi_M)=\lev(\chi_Q),
\]
for every $Q\in\Syl_p(M)$. Choosing such a $Q$ that is contained in
$P$, we have
\[
\lev(\chi)\geq\lev(\chi_P)\geq \lev(\chi_Q)=\lev(\chi_M)=\lev(\chi),
\]
and the first statement of the theorem follows. When $p=2$, we know
from \cite[Theorem~A3]{Navarro-Tiep21} that
$\QQ(\chi_Q)=\QQ_{2^{\lev(\chi_M)}}\supseteq \QQ_4$, and thus
$\QQ(\chi_P)\supseteq \QQ_4$, as wanted.

\medskip

(ii) Next, suppose that $\lev(\chi)=\lev(\mu)$. Let
$Q\in\Syl_p(Z_\pi)$. 
Recall that $\mu\in\Irr(Z_\pi)$ lies under $\chi$ and $Z_\pi$ is
central in $G$. Hence $\chi_{Z_\pi}$ is a rational multiple of
$\mu$, and we deduce that $\chi_Q$ is a rational multiple of $\mu_Q$
as well. Since $\lev(\chi)\geq 2$, we have $\QQ(\chi_Q)=
\QQ(\mu_Q)\supseteq \QQ_4$,  so it remains to show that
$\lev(\chi)=\lev(\chi_P)$.

Observe that
\[
\lev(\chi_Q)=\lev(\mu_Q).
\]
Clearly, $\lev(\mu)=\lev(\mu_Q)$, as $\mu$ is linear. Altogether, we
have
\[
\lev(\chi)\geq\lev(\chi_Q)=\lev(\mu_Q)=\lev(\mu)=\lev(\chi),
\]
implying that $\lev(\chi)=\lev(\chi_Q)$, and we are done again.

\medskip

(iii) Finally suppose that
$a=\lev(\chi)=\lev(\lambda)>\max\{\lev(\chi_M),\lev(\mu)\}$. Notice
that $N$ is a central product of $Z_\pi$ and $M$ and
$\varphi\in\Irr(N)$ lies over $\varphi_M\in\Irr(M)$ and
$\mu\in\Irr(Z_\pi)$. Therefore,
\[\QQ(\varphi)=\QQ(\chi_M)\QQ(\mu).\]
It follows from Lemma~\ref{lem:level-product} that
\[
\lev(\varphi)= \max\{\lev(\chi_M),\lev(\mu)\}<a.
\]
As $\QQ(\widehat{\varphi})=\QQ(\varphi)$ (see
\cite[Corollary~6.4]{Navarro18}), we then have
$\lev(\widehat{\varphi})<a$.

Again, by Lemma~\ref{lem:technical},
$\chi=\widehat{\varphi}\lambda$. So
$\chi_P=\widehat{\varphi}_P\lambda_P$. Assume to the contrary that
$\lev(\chi_P)\leq a-1$. Then, for every $g\in P$, we have
$\widehat{\varphi}(g)\neq 0$ and
\[
\lev(\lambda(g))=\lev(\chi(g)\widehat{\varphi}(g)^{-1})\leq
\max\{\lev(\chi(g)),\lev(\widehat{\varphi})\}\leq a-1,
\]
which implies that $\lev(\lambda)=\lev(\lambda_P)\leq a-1$, a
contradiction. We have proved that $\lev(\chi_P)\geq a$, which
forces $\lev(\chi)=\lev(\chi_P)$, as desired.

Now suppose that $p=2$. If either $\lev(\chi_M)\geq 2$ or
$\lev(\mu)\geq 2$ then it was already known from parts (i) and (ii)
that $\QQ_4\subseteq \QQ(\chi_P)$. So let us assume that both
$\chi_M$ and $\mu$ are $2$-rational. Both $\varphi$ and
$\widehat{\varphi}$ are then $2$-rational as well, as
$\QQ(\widehat{\varphi})=\QQ(\varphi)=\QQ(\chi_M)\QQ(\mu)$. It
follows that $\widehat{\varphi}_P$ is rational-valued. Now we see that $\QQ(\chi_P)\supseteq \QQ_4$, using
$\chi_P=\widehat{\varphi}_P\lambda_P$ together with the facts that
$\lambda$ is linear and $\lev(\lambda)\geq 2$.

\medskip

B. Next we consider the situation in which $G$ contains a normal
$r$-subgroup
 $R =\bZ(R)E$, where $E$ is an irreducible extraspecial $r$-group of order $r^{3}$,
 and either $R = E$ or $\bZ(R)\cong C_4$. By the main result of
 \cite{Isaacs-Navarro22}, we may assume that $G$ is non-solvable. Using \cite[Theorem~6.1]{HTZ},
 we obtain
\[\QQ(\chi)=\QQ(\chi_Z)=\QQ(\exp(2i\pi/a)),\] where $Z:=\bZ(G)$ and $a\in\ZZ^{+}$ is a certain
divisor of $\exp(Z)$ divisible by $r$.

Let $\theta$ be the (unique) irreducible constituent of $\chi_Z$, so
that $\chi_Z=\alpha \theta$ for some $\alpha\in\ZZ^+$. Let
$Q\in\Syl_p(Z)$. As $\theta$ is linear, we then have
\[\lev(\chi)\geq\lev(\chi_Q)=\lev(\theta_Q)=\lev(\theta)=\lev(\chi_Z)=\lev(\chi),\]
which implies that $\lev(\chi)=\lev(\chi_P)$. Also,
$\QQ(\chi_P)\supseteq \QQ(\chi_Q)=\QQ(\mu_Q)\supseteq \QQ_4$. The
proof is complete.
\end{proof}

Theorem~\ref{mainthm:prime-degree-case} now readily follows from
Theorem~\ref{imprimitive}, Theorem~\ref{primitive}, and
Lemma~\ref{lem:observation}.


\section{Further discussion}\label{sec:AMN-conjecture}

We have seen the connection between Conjecture~\ref{conj:main} and
Navarro-Tiep's Conjecture~\ref{conj:Navarro-Tiep1}. We now discuss
another connection, this time with the well-known
Alperin-McKay-Navarro (AMN) conjecture. In particular, we shall
explain how Conjecture~\ref{conj:main} implies that the conjectural
AMN bijection should respect the $p$-rationality level of the
defect-normalizer restrictions.

\subsection{Connection with the Alperin-McKay-Navarro conjecture}

Keep the notation from Section \ref{sec:Galauts}, so that $|G|=n=p^bm$
with $(p,m)=1$ and $\mathcal{G}:=\mathrm{Gal}(\QQ_n/\QQ)\cong\mathcal{I}\times \mathcal{K}$. Recall that
$\mathcal{H}=\mathcal{I}\times \langle \sigma\rangle,$ where
$\sigma\in \mathcal{K}$ is such that its restriction to $\QQ_m$ is the
Frobenius automorphism $\zeta \mapsto \zeta^p$.
 The McKay-Navarro
conjecture predicts that, for $P\in\Syl_p(G)$, there should exist an
$\mathcal{H}$-equivariant bijection from $\Irr_{p'}(G)$ to
$\Irr_{p'}(\bN_G(P))$. Such a bijection necessarily preserves the
$p$-rationality level of characters (see \cite[Section~2]{Hung22},
for instance).

As noted in Section~\ref{sec:cyclic-defect}, the Galois group
$\mathcal{G}$ permutes the $p$-blocks of $G$. Let $B$ be a $p$-block
of $G$ and $\mathcal{H}_B$ be the subgroup of $\mathcal{H}$ fixing $B$.
 Recall that $\mathcal{I}\leq \mathcal{H}_B$.

Now let $D$ be a defect group of $B$ and $b\in \Bl(\bN_G(D))$ be the
Brauer correspondent of $B$. The group $\mathcal{H}_B$ then permutes
(and preserves the height) the ordinary characters in $B$ and $b$.
The Alperin-McKay-Navarro (AMN) conjecture \cite[Conjecture
B]{Navarro04} asserts that there exists a bijection
\[^*: \Irr_0(B) \rightarrow \Irr_0(b)\] that commutes with the
action of $\mathcal{H}_B$. That is,
\[(\chi^\tau)^\ast=(\chi^*)^\tau\] for every $\chi\in \Irr_0(B)$ and
every $\tau\in \mathcal{H}_B$.

Let $\FF$ be the fixed field in $\QQ_n$ of $\mathcal{H}_B$. The AMN
conjecture then implies that
\[
\FF(\chi)=\FF(\chi^\ast)
\]
for all $\chi\in \Irr_0(B)$. As $\mathcal{I}\leq \mathcal{H}_B$, we
have $\FF \subseteq \QQ_m$, and thus $\lev(\FF)=0$. It follows that
the conjectural bijection $^\ast$ preserves the $p$-rationality
level:
\[\lev(\chi)=\lev(\chi^\ast).\]

The following is the AMN conjecture with the defect-normalizer
restriction incorporated.

\begin{conjecture}\label{conj:combine}
Let $p$ be a prime and $G$ a finite group. Let $B\in \Bl(G)$ be a
$p$-block of $G$ with defect group $D$ and $b\in \Bl(\bN_G(D))$ be
its Brauer correspondent. Let $\mathcal{H}_B$ be the subgroup of
$\mathcal{H}$ fixing $B$. Then there exists an
$\mathcal{H}_B$-equivariant bijection $^*: \Irr_0(B) \rightarrow
\Irr_0(b)$ such that $\lev(\chi_{\bN_G(D)})=\lev(\chi^\ast)$ for
every $\chi\in\Irr_0(B)$ of $p$-rationality level at least 2.
\end{conjecture}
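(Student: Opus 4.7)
The plan is to derive Conjecture~\ref{conj:combine} as a direct consequence of combining the Alperin--McKay--Navarro (AMN) conjecture with the main Conjecture~\ref{conj:main} of this paper. The key observation is that any $\mathcal{H}_B$-equivariant bijection predicted by AMN will automatically satisfy the added level-equality demanded by Conjecture~\ref{conj:combine}, so no new construction is needed: the conjecture is really the logical conjunction of the two predictions.

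Concretely, I would proceed in three short steps. First, assume AMN \cite[Conjecture~B]{Navarro04} and take any resulting $\mathcal{H}_B$-equivariant bijection $^*: \Irr_0(B) \rightarrow \Irr_0(b)$. Second, reuse the argument recorded just before the statement of Conjecture~\ref{conj:combine}: since $\mathcal{I} \leq \mathcal{H}_B$, the fixed field $\FF$ of $\mathcal{H}_B$ lies inside $\QQ_m$ and thus has $p$-rationality level zero, so the equivariance forces $\FF(\chi) = \FF(\chi^*)$ and hence $\lev(\chi) = \lev(\chi^*)$ for every $\chi \in \Irr_0(B)$. Third, invoke Conjecture~\ref{conj:main} to obtain $\lev(\chi_{\bN_G(D)}) = \lev(\chi)$ for every $\chi \in \Irr_0(B)$ with $\lev(\chi) \geq 2$. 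Chaining the two equalities yields $\lev(\chi^*) = \lev(\chi_{\bN_G(D)})$, which is precisely the additional property required by Conjecture~\ref{conj:combine}.

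The main obstacle is of course that both AMN and Conjecture~\ref{conj:main} remain open in general, so this strategy does not by itself yield an unconditional proof. A natural first program would be to verify Conjecture~\ref{conj:combine} in settings where both ingredients are already established. For blocks with cyclic defect, a Galois-equivariant version of AMN is accessible through Dade's cyclic-defect theory (cf.~\cite[Theorem~3.4]{Navarro04}) and Conjecture~\ref{conj:main} is our Theorem~\ref{mainthm:cyclic-defect-case}; in that setting the conditional argument above becomes unconditional. Likewise, for blocks of defect at most $2$ and the almost quasisimple examples treated in Section~\ref{sec:quasisimple}, one can hope to push the argument through. A longer-term route would be a reduction theorem to almost quasisimple groups in the spirit of those now available for the McKay--Navarro conjecture, combined with the quasisimple analysis of Section~\ref{sec:quasisimple}; the delicate point there is that one must track not merely characters of $G$ but their restrictions to $\bN_G(D)$ throughout the reduction, which is substantially more subtle than in the maximal-defect ($p'$-degree) case.
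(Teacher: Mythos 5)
Your conditional derivation is exactly how the paper itself treats this statement: the conjecture is not proved unconditionally, but Theorem~\ref{thm:relation-AMN-main} records precisely your argument, namely that an AMN bijection preserves $\lev$ via the fixed-field observation (since $\mathcal{I}\leq\mathcal{H}_B$ forces $\lev(\FF)=0$) and then Conjecture~\ref{conj:main} upgrades $\lev(\chi)=\lev(\chi^\ast)$ to $\lev(\chi_{\bN_G(D)})=\lev(\chi^\ast)$ for characters of level at least $2$. So your proposal is correct and coincides with the paper's approach.
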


\begin{theorem}\label{thm:relation-AMN-main}
Conjecture \ref{conj:combine} follows from the AMN conjecture and
Conjecture \ref{conj:main}. Conversely, Conjecture \ref{conj:main}
follows from Conjecture \ref{conj:combine}.
\end{theorem}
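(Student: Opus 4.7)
The plan is to prove the two implications separately, each reducing to one standard observation together with a triviality. The standard observation is that \emph{any} $\mathcal{H}_B$-equivariant bijection between two sets of characters on which $\mathcal{H}_B$ acts must preserve the $p$-rationality level: indeed, since $\mathcal{I} \leq \mathcal{H}_B$, the fixed field $\FF$ of $\mathcal{H}_B$ is contained in $\QQ_m$ and so satisfies $\lev(\FF)=0$. Hence if $^*\colon \Irr_0(B) \to \Irr_0(b)$ is $\mathcal{H}_B$-equivariant then $\FF(\chi)=\FF(\chi^*)$, and this forces $\lev(\chi)=\lev(\chi^*)$ for every $\chi \in \Irr_0(B)$, exactly as already recorded in the discussion preceding Conjecture~\ref{conj:combine}. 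The triviality is that $D$ is normal in $\bN_G(D)$, so applying Conjecture~\ref{conj:main} to the block $b$ of $\bN_G(D)$ (whose defect group is $D$ itself) gives nothing, and the interaction between the two conjectures happens entirely through the $B$-side and the bijection.

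For the forward direction I would fix $\chi\in\Irr_0(B)$ with $\lev(\chi)\geq 2$. The AMN conjecture supplies an $\mathcal{H}_B$-equivariant bijection $^*\colon \Irr_0(B)\to\Irr_0(b)$, so by the observation above $\lev(\chi)=\lev(\chi^*)$. Conjecture~\ref{conj:main}, applied to $\chi$, gives $\lev(\chi)=\lev(\chi_{\bN_G(D)})$. Combining the two equalities yields $\lev(\chi^*)=\lev(\chi_{\bN_G(D)})$, which is precisely the additional assertion of Conjecture~\ref{conj:combine}.

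For the reverse direction, let $\chi\in\Irr_0(B)$ with $\lev(\chi)\geq 2$. Conjecture~\ref{conj:combine} provides an $\mathcal{H}_B$-equivariant bijection $^*$ such that $\lev(\chi_{\bN_G(D)})=\lev(\chi^*)$. The same level-preservation observation gives $\lev(\chi)=\lev(\chi^*)$. Chaining these two equalities produces $\lev(\chi)=\lev(\chi_{\bN_G(D)})$, which is exactly the content of Conjecture~\ref{conj:main} (the height-zero hypothesis being automatic since $\chi\in\Irr_0(B)$).

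I do not foresee a genuine obstacle: both directions are essentially formal once the level-preservation lemma for $\mathcal{H}_B$-equivariant bijections is in hand. The only care needed is in the bookkeeping of hypotheses, namely that in both conjectures the condition $\lev(\chi)\geq 2$ is stated on the $B$-side, so one must verify that when one transfers along $^*$ the relevant levels exceed $2$; this is immediate from $\lev(\chi)=\lev(\chi^*)$.
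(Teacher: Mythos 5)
Your proposal is correct and follows essentially the same route as the paper: both directions hinge on the observation (already recorded before Conjecture~\ref{conj:combine}) that an $\mathcal{H}_B$-equivariant bijection preserves the $p$-rationality level because $\mathcal{I}\leq\mathcal{H}_B$ forces the fixed field $\FF$ to have level $0$, and then one simply chains $\lev(\chi)=\lev(\chi^*)$ with the relevant hypothesis in each direction. The side remarks (normality of $D$ in $\bN_G(D)$, transferring the level-$\geq 2$ condition) are harmless and the bookkeeping is exactly as in the paper's proof.
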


\begin{proof}
We keep the above notation. First, the AMN conjecture implies that
there exists an $\mathcal{H}_B$-equivariant bijection $^*: \Irr_0(B)
\rightarrow \Irr_0(b)$ such that $\lev(\chi)=\lev(\chi^\ast)$.
Conjecture \ref{conj:main} then implies that
$\lev(\chi_{\bN_G(D)})=\lev(\chi^\ast)$ for all $\chi\in\Irr_0(B)$
of level at least 2.

For the converse statement, let $\chi$ be a height-zero character in
a block $B$ and assume that there exists a bijection $^*: \Irr_0(B)
\rightarrow \Irr_0(b)$ that commutes with the action of
$\mathcal{H}_B$ such that $\lev(\chi_{\bN_G(D)})=\lev(\chi^\ast)$.
As mentioned, we then have $\lev(\chi)=\lev(\chi^\ast)$, and it
follows that $\lev(\chi)=\lev(\chi_{\bN_G(D)})$, as wanted.
\end{proof}

\subsection{Consequences of Conjecture~\ref{conj:main}}
A character $\chi$ is termed \emph{almost $p$-rational} if its conductor is
not divisible by $p^2$, or, equivalently, $\lev(\chi)\leq 1$ (see
\cite{Hung-Malle-Maroti21}).

\begin{consequence}
Let $\chi$ be a height-zero character of a finite group $G$ and $D$
a defect group of the $p$-block of $G$ containing $\chi$. Assume
Conjecture~\ref{conj:main} holds. Then $\chi$ is almost $p$-rational
if and only if $\chi_{\bN_G(D)}$ is almost $p$-rational. In
particular, when $p=2$, $\chi$ is $2$-rational if and only if
$\chi_{\bN_G(D)}$ is $2$-rational.
\end{consequence}

\begin{proof} It is clear that if $\chi$ is almost $p$-rational then so
is $\chi_{\bN_G(D)}$. Conversely, if $\chi$ is not almost
$p$-rational then $\lev(\chi)\geq 2$, and it follows from Conjecture~\ref{conj:main}
that $\lev(\chi_{\bN_G(D)})\geq 2$.
\end{proof}

\begin{consequence}
Let $G$ be a finite group with abelian Sylow $p$-subgroups. Assume
that Conjecture \ref{conj:main} holds. Then
$\lev(\chi)=\lev(\chi_{\bN_G(D)})$ for every $\chi\in\Irr(G)$ of
$p$-rationality level at least $2$ and $D$ a defect group of the
$p$-block of $G$ containing $\chi$.
\end{consequence}

\begin{proof} This follows from the solution of the ``if" implication of Brauer's height zero conjecture \cite{Kessar-Malle13}. 
\end{proof}

\begin{consequence}
Let $\chi$ be an irreducible $2$-height zero character of a finite
group $G$. Suppose that $\QQ(\chi)=\QQ(\sqrt{d})$ is a quadratic
number field, where $d\nequiv 1 (\bmod\, 4)$ is a square-free integer.
Assume that Conjecture \ref{conj:main} holds. Then
$\QQ(\chi)=\QQ(\chi_{\bN_G(D)})$, where $D$ a defect group of the
$p$-block of $G$ containing $\chi$.
\end{consequence}

\begin{proof} Note that $c(\sqrt{d})=4|d|$ when $d\nequiv 1 (\bmod 4)$ is a square-free integer.
Therefore, by the hypothesis, $\lev(\chi)\geq 2$. By Conjecture
\ref{conj:main}, we then have $\lev(\chi_{\bN_G(D)})\geq 2$. In
particular, $\chi_{\bN_G(D)}$ is not rational, implying that
$\QQ(\chi)=\QQ(\chi_{\bN_G(D)})$.
\end{proof}


\subsection{Examples}\label{examples} We end with examples
to justify some of our claims from the Introduction.

First, for height-zero characters $\chi$ in general, $\lev(\chi)$
does not always equal $\lev(\chi_{D})$, where $D$ is a defect group
of the $p$-block containing $\chi$. For example, when $p=2$, the
group $\texttt{SmallGroup}(24,4)$ has characters of degree 2 with
$\lev(\chi)=2$ and $\lev(\chi_D)=0$; the group
\texttt{SmallGroup}(48,5) has characters of degree 2 with
$\lev(\chi)=3$ and $\lev(\chi_D)=2$; and several similar examples
can be found among the \texttt{SmallGroup} library in
\cite{GAP}. There are also examples for odd primes: when $p=3$, the group
\texttt{SmallGroup}(108,19) has characters of degree 3 with
$\lev(\chi)=2$ and $\lev(\chi_D)=1$.

Next, the assumption $\lev(\chi)\geq 2$ in
Conjecture~\ref{conj:main} is necessary. For example, the group
$2.\Al_{10}.2$ with $p=5$ has characters with degree $432$  with
$\lev(\chi)=1$ but $\lev(\chi_P)=0=\lev(\chi_{\bN_G(P)})$. Further,
$2.\Al_{11}$ with $p=3$ has characters of degree 1584 in blocks
having non-maximal defect with $\lev(\chi)=1$ and
$\lev(\chi_D)=0=\lev(\chi_{\bN_G(D)})$.


\end{document}